\title{Sharp conditions for exponential and non-exponential uniform stabilization of the time-dependent damped wave equation}
\author{ Perry Kleinhenz}%
\date{}							
\theoremstyle{definition}
\newtheorem{definition}{Definition}
\newtheorem{example}[definition]{Example}
\newtheorem{assumption}{Assumption}
\newtheorem{remark}{Remark}
\theoremstyle{theorem}
\newtheorem{theorem}{Theorem}[section]
\newtheorem{lemma}[theorem]{Lemma}
\newtheorem{proposition}[theorem]{Proposition}
\newcommand{\Rb}{\mathbb{R}}
\newcommand{\Rn}{\mathbb{R}^n}
\newcommand{\C}{\mathbb{C}}
\newcommand{\Nb}{\mathbb{N}}
\newcommand{\intrn}{\int_{\Rn}}
\newcommand{\ra}{\rightarrow}
\newcommand{\rhu}{\rightharpoonup}
\newcommand{\<}{\left\langle}
\renewcommand{\>}{\right\rangle}
\newcommand{\e}{\varepsilon}
\renewcommand{\d}{\delta}
\newcommand{\limn}{\lim_{n \ra \infty}}
\newcommand{\limj}{\lim_{j \ra \infty}}
\newcommand{\limt}{\lim_{t \ra \infty}}
\newcommand{\nm}[1]{\left\| #1 \right\|}
\newcommand{\lp}[2]{ \nm{#1}_{L^{#2}}}
\newcommand{\hp}[2]{\nm{#1}_{H^{#2}}}
\newcommand{\ltwo}[1]{\lp{#1}{2}}
\newcommand{\ltwooo}[1]{\nm{ #1}_{L^2_0}}
\newcommand{\ltwom}[1]{\| #1\|_{L^2(M)}}
\newcommand{\vphi}{\varphi}
\newcommand{\p}{\partial}
\newcommand{\Ci}{C^{\infty}}
\newcommand{\Ac}{\mathcal{A}}
\newcommand{\Lc}{\mathcal{L}}
\newcommand{\Hc}{\mathcal{H}}
\newcommand{\ti}{\widetilde}
\renewcommand{\Re}{\text{Re }}
\newcommand{\Cm}{\overline{C}}
\newcommand{\Winf}{W_{\infty}}
\newcommand{\Omegab}{\overline{\Omega}}
\newcommand{\Char}{\text{Char}}
\newcommand{\Gc}{\mathcal{G}}
\newcommand{\inter}{\text{int}}
\newcommand{\Mc}{\mathcal{M}}
\newcommand{\Bc}{\mathscr{B}}
\def\XXint#1#2#3{{\setbox0=\hbox{$#1{#2#3}{\int}$ }
\vcenter{\hbox{$#2#3$ }}\kern-.6\wd0}}
\begin{document}
\maketitle
\begin{abstract}
It is classical that uniform stabilization of solutions to the autonomous damped wave equation is equivalent to every geodesic meeting the positive set of the damping, which is called the geometric control condition. In this paper, it is shown that for time-dependent damping a generalization of the geometric control condition is equivalent to uniform stabilization at an exponential rate. Additionally, upper and lower bounds on non-exponential uniform stabilization rates are computed for damping which do not satisfy this geometric control condition. Decay rates are guaranteed via an observability argument, including a new uniform time-dependent observability inequality. Bounds on decay rates are proved via a Gaussian beam construction.
\end{abstract}
\section{Introduction}
Let $(M,g)$ be a smooth compact manifold without boundary and let $\Delta_g$ be the associated Laplace-Beltrami operator. Let $W \in L^{\infty} (M\times [0,\infty))$ be a nonnegative function. Consider the damped wave equation with time-dependent damping
\begin{equation}\label{TDWE}
\begin{cases}
(\p_t^2 -\Delta_g + 2W(x,t)\p_t)u=0, \\
(u,u_t)|_{t=0} =(u_0, u_1) \in H^1(M) \times L^2(M).
\end{cases}
\end{equation}
The main of object of study in this paper is the energy of a solution
$$
E(u,t) = \frac{1}{2} \int_M |\nabla_g u(x,t)|^2 + |\p_t u(x,t)|^2 dx_g.
$$
Uniform stabilization occurs when there exists a function $r(t)\ra 0$ as $t \ra \infty$, such that  \begin{equation}
	E(u,t) \leq r(t) E(u,0).
\end{equation} 
When $W$ is autonomous, exponential uniform stabilization is equivalent to $W$ satisfying the Geometric Control Condition (GCC) \cite{Ralston1969, RauchTaylor1975}. The GCC is satisfied if there exists some $L>0,$ such that every geodesic with length at least $L$ intersects the set $\{W>0\}$. 
As introduced in \cite{Lebeau1996}, the GCC is equivalent to the existence of $T_0,\Cm>0,$ such that for all unit-speed geodesics $\gamma(t)$ and $T \geq T_0$
\begin{equation}
	\frac{1}{T}\int_0^T W(\gamma(t)) dt \geq \Cm.
\end{equation} 
That is, there is a uniform lower bound on the average of the damping along any geodesic of sufficient length. In this paper, we show that the appropriate generalization of this condition to the time-dependent setting is equivalent to exponential uniform stabilization. 

\begin{assumption}\label{TGCC} (Time-dependent geometric control condition)
Assume there exists $T_0>0$, $ \Cm >0,$ such that for all unit-speed geodesics $\gamma(t)$, any starting time $t_0 \in [0,\infty),$ and $T \geq T_0$
$$
\frac{1}{T} \int_0^T W(\gamma(t), t_0 + t) dt \geq \Cm.
$$
\end{assumption}
\begin{theorem}\label{mainresult}
Suppose $W(x,t) \in C^0_u(M \times [0,\infty))$, that is $W$ is uniformly continuous and uniformly bounded. There exists $C,c>0$ such that all solutions to \eqref{TDWE} satisfy
$$
E(u,t+t_0) \leq C e^{-ct} E(u,t_0), \quad \text{for all } t_0, t \geq 0,
$$
if and only if $W$ satisfies Assumption \ref{TGCC}.
\end{theorem}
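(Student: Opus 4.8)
The plan is to prove the two implications separately, using the standard equivalence between uniform exponential stabilization and an observability estimate, adapted to the time-dependent and "sliding window" setting demanded by the statement.

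For the direction "Assumption \ref{TGCC} $\Rightarrow$ exponential decay", I would first establish a uniform observability inequality: there exist $T_1>0$ and $C_{\text{obs}}>0$ such that for every $t_0\geq 0$ and every solution $u$ of \eqref{TDWE},
\begin{equation}
E(u,t_0)\leq C_{\text{obs}}\int_{t_0}^{t_0+T_1}\int_M W(x,t)\,|\p_t u(x,t)|^2\,dx_g\,dt.
\end{equation}
Granting this, the energy identity $\frac{d}{dt}E(u,t)=-2\int_M W(x,t)|\p_t u|^2\,dx_g$ shows $E(u,t_0+T_1)\leq E(u,t_0)-C_{\text{obs}}^{-1}E(u,t_0)=(1-C_{\text{obs}}^{-1})E(u,t_0)$, and iterating this over the windows $[t_0+kT_1,t_0+(k+1)T_1]$ yields $E(u,t_0+t)\leq Ce^{-ct}E(u,t_0)$ with constants independent of $t_0$, which is exactly the claim. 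The observability inequality itself I would prove by contradiction and compactness/microlocal defect measures: take a sequence of solutions $u_n$ and times $t_0^{(n)}$ violating it with $E(u_n,t_0^{(n)})=1$; after translating in time so that $t_0^{(n)}=0$ (replacing $W$ by $W_n(x,t)=W(x,t_0^{(n)}+t)$, which stays in a fixed bounded, uniformly continuous — hence precompact by Arzelà–Ascoli on compact time intervals — family), extract a weak limit and an associated microlocal defect measure $\mu$ on $T^*M\setminus 0$ carried by the characteristic set. The vanishing of the damping term forces $\mu$ to be invariant under the geodesic flow, and the averaged condition in Assumption \ref{TGCC} (which survives passing to the limit $W_n\to W_\infty$ locally uniformly, since it is an inequality on time-averages) forces $\int_0^{T_0}W_\infty(\gamma(t),t)\,dt>0$ along every geodesic, contradicting $\mu\neq 0$; the propagation/invariance argument here is the technically delicate point because the equation is time-dependent, so one must work with defect measures on $T^*(M\times\mathbb{R})$ or carefully track the $t$-dependence. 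I expect that the excerpt's "new uniform time-dependent observability inequality" is precisely this step, so I would cite/use it once it is in hand.

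For the converse, "exponential decay $\Rightarrow$ Assumption \ref{TGCC}", I would argue contrapositively: if Assumption \ref{TGCC} fails, then for every $T_0$ and every $\Cm>0$ there exist a geodesic $\gamma$ and a starting time $t_0$ with $\frac1T\int_0^T W(\gamma(t),t_0+t)\,dt$ small on some long window. Using a Gaussian beam (WKB) construction concentrated along $\gamma$ on the time interval $[t_0,t_0+T]$, one builds an approximate solution whose energy is essentially constant up to errors controlled by $\int W$ along the beam and by curvature/lower-order terms that are $o(1)$ as the beam's frequency parameter $h\to 0$. This produces, for suitable initial data at time $t_0$, a genuine solution with $E(u,t_0+T)\geq (1-o(1)-C\int_0^T W(\gamma(t),t_0+t)\,dt)E(u,t_0)$, which for $T$ large and $\int W$ small contradicts the uniform bound $E(u,t_0+t)\leq Ce^{-ct}E(u,t_0)$ (note that the $t_0$-uniformity in the hypothesis is essential here, since the bad window may sit arbitrarily far out in time). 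The main obstacle on this side is making the Gaussian beam error estimates uniform in the base point $t_0$ and in which geodesic is chosen; this is handled by the uniform continuity and uniform boundedness of $W$, which give uniform control of the transport equations governing the beam's amplitude, so the construction depends on $T$ and the failure parameter $\Cm$ but not on $t_0$ or $\gamma$.

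Overall, I anticipate the analytic heart of the argument — and the step most likely to require genuinely new work rather than adaptation of autonomous techniques — to be the time-uniform observability estimate and the accompanying propagation-of-compactness statement for the time-dependent operator $\p_t^2-\Delta_g+2W(x,t)\p_t$, because the defect-measure machinery must be set up so that the time-translation family $\{W(\cdot,t_0+\cdot)\}_{t_0\geq 0}$ is handled uniformly and the limiting measure's flow-invariance is derived despite the lower-order term being only $L^\infty$ (and merely uniformly continuous) in $t$.
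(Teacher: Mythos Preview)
Your proposal is essentially correct and follows the same two-part strategy as the paper: uniform observability plus iteration for sufficiency, and Gaussian beams for necessity. Two differences are worth noting.

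For sufficiency, you propose to run the defect-measure contradiction directly on solutions of the \emph{damped} equation after time-translation, so your sequence $u_n$ solves $(\p_t^2-\Delta_g+2W_n\p_t)u_n=0$ with varying $W_n(x,t)=W(x,t_0^{(n)}+t)$. The paper instead proves the uniform observability for the \emph{undamped} wave equation (Proposition~\ref{uniobserveprop}) and then transfers it to the damped equation via the Haraux-type equivalence (Lemma~\ref{observeconnection}). The advantage of the paper's route is that after time-translation the undamped equation is autonomous and identical for every $n$, so the defect-measure extraction and propagation are entirely standard; the Arzel\`a--Ascoli step on $\{W_n\}$ is then used only to produce a fixed observation region $\{W_\infty>\Cm/2\}$ on which a single, $t_0$-independent observability constant applies. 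Your direct approach is workable but forces you to justify propagation for a sequence of equations with varying lower-order terms, which is exactly the technicality the paper sidesteps.

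For necessity, your Gaussian-beam outline matches the paper's, but two points deserve correction. First, the energy lower bound you should extract is the exponential one $E(u,t_0+T)\gtrsim \exp\bigl(-2\int_0^T W(\gamma(t),t_0+t)\,dt\bigr)E(u,t_0)$ (this is $G(\gamma,t_0,t_0+T)^2$ in the paper's notation), not the linearized $1-C\int W$; the linear version is not strong enough when $T$ is large. Second, you do \emph{not} need the beam error estimates to be uniform in $t_0$ or in $\gamma$: the contradiction is obtained at a single well-chosen $(\gamma,t_0,T)$, and the paper's constants in Proposition~\ref{quasiprop} are explicitly allowed to depend on $t_0$.
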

Theorem \ref{mainresult} is actually a consequence of two more general results: Theorem \ref{lowerboundthm} which gives lower bounds on uniform stabilization rates when $W \in C^0(M \times [0,\infty))$ and Theorem \ref{bdecaythm} which provides uniform stabilization rates when $W$ possibly goes to zero as $t \ra \infty$ and $M$ may have a boundary. 

To discuss lower bounds on uniform stabilization rates, we first define the smallest average of $W$ along a unit speed geodesic $\gamma$ on time intervals of length $T$ as $L(T)$, and the limit of this quantity as $L_{\infty}$ 
\begin{equation}
	L(T) = \inf_{t_0 \geq 0, \gamma} \frac{1}{T}  \int_{t_0}^{t_0+T} W(\gamma(t), t) dt, \quad L_{\infty} = \lim_{T \ra \infty} L(T).
\end{equation}
We also define the smallest total amount of damping along any unit-speed geodesic from time $0$ to $t$ as $\Sigma(t)$
\begin{equation}
	\Sigma(t) = \inf_{\gamma} \int_0^t W(\gamma(s), s) ds.
\end{equation}
The quantity $L_{\infty}$ bounds the exponential uniform stabilization rate when the damping satisfies Assumption \ref{TGCC}, while $\Sigma(t)$ bounds the uniform stabilization rate in general. 

\begin{theorem}\label{lowerboundthm}
Assume $W \in C^0(M \times [0,\infty))$. 
\begin{enumerate}
	\item Define the best exponential decay rate 
	\begin{equation}
	\alpha = \sup\{\beta \geq 0: \exists C>0 \text{ such that } E(u, t+t_0) \leq C e^{-\beta t} E(u,t_0), \forall u \text { solving } \eqref{TDWE}, t_0,t \geq 0\}.
	\end{equation}
	Then $\alpha \leq 2 L_{\infty}$. 
	\item When $\Sigma(t) \ra \infty$, define 
	\begin{equation}
		\sigma = \sup \{\beta \geq 0: \exists C>0 \text{ such that } E(u,t) \leq C e^{-\beta \Sigma(t)} E(u,0) , \forall u \text{ solving } \eqref{TDWE}, t \geq 0\}.
	\end{equation}
	Then $\sigma \leq 2$.  
	\item If $\Sigma(t)$ is bounded, then there can not be a uniform stabilization rate. That is, there does not exist $r(t) \ra 0$ as $t \ra \infty,$ such that $E(u,t) \leq r(t) E(u,0)$.
\end{enumerate}
\end{theorem}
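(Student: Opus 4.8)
The plan is to prove all three parts with a single device: a high-frequency Gaussian beam concentrated on a unit-speed geodesic, whose energy decays precisely at the rate set by the integral of $W$ along that geodesic. If an assumed stabilization rate were faster than what such a beam permits, testing the rate against the beam yields a contradiction. Fix a unit-speed geodesic $\gamma\colon\Rb\to M$, a starting time $t_0\geq0$, and a length $T>0$; since $M$ is compact without boundary, $\gamma$ is defined for all time. Following the standard construction \cite{Ralston1969}, I would build, in coordinates adapted to the curve $\tau\mapsto(\gamma(\tau),\tau)$, a complex phase $\psi(x,t)$ solving the eikonal equation $(\partial_t\psi)^2=|\nabla_g\psi|^2$ to second order along that curve, normalized so $\partial_t\psi\equiv1$ there, with $\Im\psi\geq0$ vanishing to exactly second order on $\{x=\gamma(t)\}$, together with amplitudes $a_0,\dots,a_N$ solving the transport hierarchy — into which the damping enters only through the term $2W\,\partial_t\psi\,a_j$ at each order. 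The function $u_k=k^{(\dim M)/4}\bigl(\sum_{j=0}^{N}k^{-j}a_j\bigr)e^{ik\psi}$, cut off to the coordinate neighborhood, then satisfies $(\partial_t^2-\Delta_g+2W\partial_t)u_k=f_k$, and for $N$ large enough $\|f_k(\cdot,t)\|_{L^2(M)}/E(u_k,t_0)^{1/2}\to0$ as $k\to\infty$, uniformly for $t\in[t_0,t_0+T]$. (If $W$ is only continuous one may either keep fewer amplitude terms, or first replace $W$ by a smooth approximant; the latter changes $\int W\,d\tau$ along $\gamma$ by an arbitrarily small amount and so is harmless for lower bounds.) The decisive feature is the leading transport equation: read along the beam it is $\tfrac{d}{d\tau}a_0=-\tfrac12\bigl((\partial_t^2-\Delta_g)\psi\bigr)a_0-W(\gamma(\tau),\tau)\,a_0$, so that $a_0$ equals the \emph{undamped} leading amplitude times $\exp\!\bigl(-\int_{t_0}^{\tau}W(\gamma(s),s)\,ds\bigr)$.

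\textbf{Energy of the beam.} Normalize $u_k$ so that $E(u_k,t_0)=1$, let $v_k$ solve \eqref{TDWE} with the same Cauchy data at $t=t_0$, and set $w_k=u_k-v_k$. Since $W\geq0$, the energy identity
$$
\frac{d}{dt}E(w_k,t)=-2\int_M W\,|\partial_t w_k|^2\,dx_g+\int_M f_k\,\partial_t w_k\,dx_g
$$
and $w_k(t_0)=0$ give $E(w_k,t_0+t)^{1/2}\leq\tfrac{1}{\sqrt2}\int_0^t\|f_k(\cdot,t_0+s)\|_{L^2}\,ds$, with \emph{no} exponentially growing factor — this is where monotonicity of the damped energy is used. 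On the other hand, evaluating $E(u_k,t_0+t)$ by Laplace's method from $|a_0|^2$ and noting that the $W$-independent part of the construction organizes into a factor identically $1$ in the energy ratio — because the undamped Gaussian beam conserves energy in the high-frequency limit, exactly as the undamped wave equation does — one finds $E(u_k,t_0+t)=\exp\!\bigl(-2\int_{t_0}^{t_0+t}W(\gamma(\tau),\tau)\,d\tau\bigr)(1+o(1))$ as $k\to\infty$, for each fixed $t\in[0,T]$. Combining the two estimates,
$$
\liminf_{k\to\infty}\frac{E(v_k,t_0+t)}{E(v_k,t_0)}\;\geq\;\exp\!\Bigl(-2\int_{t_0}^{t_0+t}W(\gamma(\tau),\tau)\,d\tau\Bigr).
$$

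\textbf{The three conclusions.} For part (1): if $E(u,t+t_0)\leq Ce^{-\beta t}E(u,t_0)$ holds for all solutions, then for each large $T$ pick, by definition of $L(T)$, a starting time $t_0$ and geodesic $\gamma$ with $\int_{t_0}^{t_0+T}W(\gamma(\tau),\tau)\,d\tau<(L(T)+\varepsilon)T$, build the beam above, apply the rate to $v_k$ with $t=T$, and let $k\to\infty$; this gives $\tfrac12 e^{-2(L(T)+\varepsilon)T}\leq Ce^{-\beta T}$, i.e. $e^{(\beta-2L(T)-2\varepsilon)T}\leq2C$. Since $L(T)\to L_\infty$ and $\varepsilon>0$ is arbitrary, letting $T\to\infty$ forces $\beta\leq2L_\infty$, hence $\alpha\leq2L_\infty$. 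For part (2): if the rate $Ce^{-\beta\Sigma(t)}$ held for all solutions with some $\beta>2$, then along any sequence $t_j\to\infty$ choose geodesics with $\int_0^{t_j}W(\gamma_j(s),s)\,ds<\Sigma(t_j)+1$, build beams starting at $t_0=0$, and apply the rate to obtain $\tfrac12 e^{-2\Sigma(t_j)-2}\leq Ce^{-\beta\Sigma(t_j)}$, i.e. $e^{(\beta-2)\Sigma(t_j)}\leq2Ce^2$; since $\Sigma(t_j)\to\infty$ and $\beta>2$ this is impossible, so $\sigma\leq2$. For part (3): if $\Sigma(t)\leq\Sigma_\infty$ for all $t$, the same construction produces, for every $t$, a solution $v$ with $E(v,t)/E(v,0)\geq\tfrac12 e^{-2\Sigma_\infty-2}$, a positive constant independent of $t$; hence no $r(t)\to0$ can satisfy $E(u,t)\leq r(t)E(u,0)$ for all solutions.

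\textbf{Main difficulty.} The substantive part is the construction and, above all, the energy asymptotics: verifying that everything in the beam unrelated to $W$ combines into a factor tending to $1$ in the energy ratio, so that the damping contributes exactly $e^{-2\int W\,d\tau}$. This requires tracking the transport equations, the Gaussian spreading, and Laplace's method simultaneously and tying the result to the conservation law of the undamped equation; the reduction to smooth $W$ and the passage from the approximate beam to an exact solution are comparatively routine, the latter being clean precisely because the damped energy is non-increasing.
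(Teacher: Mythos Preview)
Your proposal is correct and follows essentially the same route as the paper: build a Gaussian beam along a geodesic whose energy decays like $\exp\bigl(-2\int W(\gamma(s),s)\,ds\bigr)$, compare to the exact solution with the same Cauchy data using that the damped energy is non-increasing, and contradict any faster assumed rate. The only organizational difference is that the paper keeps the undamped Ralston beam $u_k$ and multiplies externally by the propagator $G_j(\gamma,t_0,t)=\exp\bigl(-\int_{t_0}^{t}W_j(\gamma(s),s)\,ds\bigr)$ (after smoothing $W\to W_j$), whereas you fold $W$ into the leading transport equation; since that transport equation factorizes exactly as you note, the two constructions coincide.
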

\begin{remark} 
	\begin{enumerate}
		\item The inequalities $\alpha \leq 2 L_{\infty}$ and $\sigma \leq 2$ are lower bounds on the uniform stabilization rate.
		\item If Assumption \ref{TGCC} is not satisfied, then $L_{\infty}=0,$ and exponential decay cannot occur. Although the form and content of parts 1) and 2) of this theorem are similar, they are written separately to emphasize the connection to best exponential decay rate results \cite{Lebeau1996, KeelerKleinhenz2023}, and because doing so makes it clear that Assumption 1 is necessary for exponential uniform stabilization.
		\item 	Note that $W \in C^0(M \times [0,\infty))$ need not be bounded as $t \ra \infty$. 
	\end{enumerate}
\end{remark}

Now to discuss uniform stabilization rates on manifolds with boundary, we begin with some definitions. Let $\Omega$ be an open bounded connected subset of $M$, with a smooth boundary if $\p \Omega \neq \emptyset$. When $\p \Omega$ is nonempty let $B$ either be the Dirichlet trace operator, $B u = u |_{\p \Omega},$ or the Neumann trace operator, $Bu = \p_n u|_{\p \Omega}$,
where $\p_n$ is the outward normal derivative on $\p \Omega$.

Let $A=-\Delta_g$ be the Laplace operator with domain
$$
D(A) = \{u \in L^2(\Omega); Au \in L^2(\Omega) \text{ and } Bu=0 \text{ when } \p\Omega \neq \emptyset\}.
$$
Note $A$ is self adjoint and nonnegative. 
With Dirichlet boundary conditions define $H = H^1_0(\Omega)$, and it's dual with respect to $L^2$ as $H'=H^{-1}(\Omega)$. With Neumann boundary conditions, or when $\p\Omega=\emptyset$, let $H=H^1(\Omega)$, and let $H'$ be its dual with respect to $L^2$.

Let $W \in C^{0}_u ( \overline{\Omega} \times [0,\infty))$, that is $W$ is uniformly continuous and uniformly bounded, up to the boundary if there is one. Consider the damped wave equation 
\begin{equation}\label{TDWEb}
\begin{cases}
(\p_t^2 + A+ 2 W(x,t) \p_t ) u =0,&(x,t) \in \Omega \times (0,\infty)\\
Bu = 0, &\text{when } \p\Omega \neq \emptyset  \\
(u,\p_t u)|_{t=0} = (u_0, u_1) &\in H \times L^2(\Omega).
\end{cases}
\end{equation}
Note that for any $(u_0, u_1) \in H \times L^2(\Omega)$ there is a unique weak solution $u \in L^2((0,\infty); H)$ of \eqref{TDWEb} with $\p_t u \in L^2((0,\infty); L^2(\Omega))$.

When there is a boundary the appropriate generalization of the GCC uses generalized geodesics, and is otherwise identical to Assumption \ref{TGCC}. See Appendix \ref{nullbichar} for details on the construction of generalized geodesics. 
\begin{assumption}\label{TGCCp} (Time-dependent geometric control condition on manifold with boundary)
Assume there exists $T_0, \Cm >0,$ such that for any unit-speed generalized geodesic $\gamma(t)$, any starting time $t_0 \in [0,\infty),$ and $T \geq T_0$
\begin{equation}
	\frac{1}{T} \int_0^T W(\gamma(t), t_0 + t) dt \geq \Cm.
\end{equation}
\end{assumption}
Now we can state our main uniform stabilization result. When $W(x,t)$ is equal to the product of a function $\ti{W}(x,t)$, satisfying this time-dependent geometric control condition, and a positive function $f(x,t)$, which potentially goes to $0$ as $t \ra \infty$, we obtain a uniform stabilization rate. Furthermore, the rate qualitatively matches that of Theorem \ref{lowerboundthm} case 2.

\begin{theorem}\label{bdecaythm}
Let $f \in L^{\infty}(\overline{\Omega} \times [0,\infty))$ and assume there exists $C_m, C_M>0$, and $b(t)$ a positive decreasing function, such that $C_m b(t) \leq f(x,t) \leq C_M b(t)$. Let $\ti{W} \in C^0_u( \overline{\Omega} \times [0,\infty))$ and  $W(x,t)=\ti{W}(x,t) f(x,t)$. 
When $\p\Omega \neq \emptyset$, assume that no generalized bicharacteristic has contact of infinite order with $ \p\Omega \times (0,\infty)$, that is $\Gc^{\infty}=\emptyset$. If $\ti{W}$ satisfies Assumption \ref{TGCCp}, then there exists $C,c>0,$ such that all solutions to \eqref{TDWEb} satisfy
\begin{equation}
	E(u,t) \leq C \exp(-c \Sigma(t)) E(u,0), \quad t \geq 0.
\end{equation}
\end{theorem}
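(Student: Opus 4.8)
The plan is to derive the decay from a \emph{uniform-in-time observability inequality} for the weight $\ti W$, combine it with the energy dissipation identity to obtain a one-step decay estimate whose per-step loss degrades like $b$, iterate over windows of a fixed length, and finally compare the resulting sum with $\Sigma(t)$. Concretely, the central claim is: since $\ti W \in C^0_u$ satisfies Assumption \ref{TGCCp} (and, when $\p\Omega\neq\emptyset$, $\Gc^\infty=\emptyset$), there exist $T>0$ and $C_{\mathrm{obs}}>0$, \textbf{independent of the starting time}, such that every solution $u$ of \eqref{TDWEb} satisfies $E(u,t_0)\le C_{\mathrm{obs}}\int_{t_0}^{t_0+T}\int_\Omega \ti W(x,t)\,|\p_t u(x,t)|^2\,dx\,dt$ for all $t_0\ge 0$. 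This is the ``new uniform time-dependent observability inequality'' advertised in the abstract.

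\textbf{Proving the observability.} I would argue by contradiction with microlocal (or semiclassical) defect measures. If the inequality fails there are solutions $u_n$ of \eqref{TDWEb} and times $t_0^n\ge 0$ with $E(u_n,t_0^n)=1$ but observation integrals tending to $0$. Translating in time, $u_n(\cdot,t_0^n+\cdot)$ solves a damped wave equation whose coefficients pass to a limit: uniform continuity and the uniform bound on $\ti W$ give, via Arzel\`a--Ascoli, strong local convergence of $\ti W(\cdot,t_0^n+\cdot)$ to some $\ti W_\infty\in C^0_u$ still satisfying the averaged control condition of Assumption \ref{TGCCp}, while $f(\cdot,t_0^n+\cdot)$ need only converge weak-$\ast$, since it multiplies a strongly convergent factor; in particular the dissipated energy $\int \ti W f\,|\p_t u_n|^2\le \|f\|_{L^\infty}\int \ti W\,|\p_t u_n|^2$ tends to $0$ as well. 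The defect measure $\mu$ of $(u_n,\p_t u_n)$ is then supported on the characteristic set, is annihilated on the time-dependent region $\{\ti W_\infty>0\}$ by the vanishing observation, and propagates along the generalized bicharacteristic flow --- this is where $\Gc^\infty=\emptyset$ enters, guaranteeing the Melrose--Sj\"ostrand flow is regular enough that propagation of singularities up to the boundary applies, and where the merely $L^\infty$ damping is harmless because $\{W>0\}\subset\{\ti W_\infty>0\}$ forces the absorption term to vanish against $\mu$. Invariance of $\mu$ together with the lower bound on the time-average of $\ti W_\infty$ along every geodesic forces $\mu\equiv 0$ once $T$ is taken large enough, while a separate treatment of the weak limit / low-frequency part (compactness of the lower-order terms, plus unique continuation for the limiting equation) shows no energy survives there, contradicting $E=1$.

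\textbf{From observability to the rate.} Insert the observability into the energy dissipation inequality $E(u,t_0)-E(u,t_0+T)\ge 2\int_{t_0}^{t_0+T}\int_\Omega W\,|\p_t u|^2$. Since $b$ is decreasing, $W=\ti W f\ge C_m\,b(t_0+T)\,\ti W$ on $[t_0,t_0+T]$, so $\int_{t_0}^{t_0+T}\int_\Omega \ti W\,|\p_t u|^2\le \tfrac{1}{2C_m b(t_0+T)}\big(E(u,t_0)-E(u,t_0+T)\big)$, and hence $E(u,t_0+T)\le \big(1-\kappa\,b(t_0+T)\big)\,E(u,t_0)$ with $\kappa=2C_m/C_{\mathrm{obs}}$, uniformly in $t_0\ge 0$ (the factor is automatically nonnegative, taking $E(u,t_0)>0$). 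Iterating at $t_0=0,T,2T,\dots$ and using $1-x\le e^{-x}$ gives $E(u,nT)\le \exp\!\big(-\kappa\sum_{j=1}^n b(jT)\big)E(u,0)$, and since $E$ is nonincreasing the same bound holds for $t\in[nT,(n+1)T)$. Finally, for every unit-speed (generalized) geodesic $\gamma$, $\int_0^t W(\gamma(s),s)\,ds\le C_M\|\ti W\|_{L^\infty}\int_0^t b(s)\,ds\le C_M\|\ti W\|_{L^\infty}\,T\big(b(0)+\sum_{j=1}^n b(jT)\big)$ by monotonicity of $b$; taking the infimum over $\gamma$ yields $\sum_{j=1}^n b(jT)\ge c'\Sigma(t)-b(0)$ for $t<(n+1)T$, and combining the two estimates produces $E(u,t)\le C e^{-c\Sigma(t)}E(u,0)$ with $c=\kappa c'$, $C=e^{\kappa b(0)}$ --- a rate whose form matches the lower bound of Theorem \ref{lowerboundthm}(2), though not its constant.

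\textbf{Main obstacle.} Everything after the observability is the standard energy-dissipation iteration, adapted so that the per-step loss tracks $b$; the genuinely delicate point is the uniform-in-time observability. One must run a normal-families / defect-measure argument in which the time-translated dampings $W(\cdot,t_0^n+\cdot)$ converge in a weak enough sense while the averaged geometric control condition and the propagation of $\mu$ both survive the limit, and one must close the boundary case through the $\Gc^\infty=\emptyset$ hypothesis and the frequency-zero part through unique continuation.
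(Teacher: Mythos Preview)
Your proposal is correct and follows the same overall strategy as the paper: establish a uniform-in-$t_0$ observability inequality with weight $\ti W$, combine it with the dissipation identity to get a one-step estimate $E(u,t_0+T)\le(1-\kappa\,b(t_0+T))E(u,t_0)$, iterate, and compare $\sum_j b(jT)$ to $\Sigma(t)$ via monotonicity of $b$. The one organizational difference is that the paper first proves the uniform observability for solutions of the \emph{undamped} wave equation (Proposition~\ref{uniobserveprop}, itself obtained by applying the fixed-time observability of Lemma~\ref{observeprop} to the Arzel\`a--Ascoli limit $W_\infty$ and perturbing back) and then transfers it to damped solutions via the comparison Lemma~\ref{observeconnection}, whereas you run the defect-measure contradiction directly on damped solutions, using that the vanishing observation forces $W_n\p_t u_n\to 0$ in $L^2$ so that the $u_n$ are $o(1)$-solutions of the free wave equation and the usual propagation applies; both routes work, and yours trades the extra comparison lemma for a slightly less modular argument.
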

See Appendix \ref{nullbichar} for a precise definition of $\Gc^{\infty}$. 

\begin{remark} 
	\begin{enumerate}
		\item When $f(x,t) =1$, $\ti{W}=W$, then Assumption \ref{TGCCp} guarantees $\Sigma(t) \geq \Cm t$ and Theorem \ref{bdecaythm} provides uniform stabilization at an exponential rate. Together with Theorem \ref{lowerboundthm} part 1, this proves Theorem \ref{mainresult} when $W \in C^0_u(M \times [0,\infty)) $ and $M$ does not have a boundary. Note that Assumption \ref{TGCCp} holds for any starting time $t_0 \geq 0$, which is what allows us to take any $t_0 \geq 0$ in Theorem \ref{mainresult}.
		
		\item When $M$ does not have a boundary, Theorem \ref{lowerboundthm} prevents decay faster than $\exp(-2\Sigma(t))$, while for $W$ decreasing of the form specified by Theorem \ref{bdecaythm}, decay is guaranteed at $\exp(-c\Sigma(t))$. This shows these rates are qualitatively correct. 

		\item Note that $f(x,t)$ need not be monotone decreasing, it only needs to be bounded between constant multiples of a monotone decreasing positive function $b(t)$.

		\item When $W$ is autonomous, solutions are a semigroup and uniform stabilization is automatically exponential, so the geometric control condition is equivalent to uniform stabilization. However, when $W$ depends on time, solutions no longer form a semigroup. Furthermore, non-exponential uniform stabilization rates occur and the time dependent geometric control condition is not equivalent to uniform stabilization. For example when $b(t)=(t+1)^{-\beta}$ for $\beta \in (0,1)$, Theorems \ref{lowerboundthm} and \ref{bdecaythm} give uniform stabilization at rate $\exp(-ct^{1-\beta})$. See Section \ref{examplesection} for a further discussion and examples. 
	\end{enumerate}
\end{remark}
Theorem \ref{bdecaythm} is proved via a new uniform time-dependent observability inequality for the wave equation. To discuss this in more detail, we first define the standard wave equation
\begin{equation}\label{WEb}
\begin{cases}
(\p_t^2 + A) \psi =0,&(x,t) \in \Omega \times (0,\infty)\\
B\psi = 0 &\text{when } \p\Omega \neq \emptyset  \\
(\psi,\p_t \psi)|_{t=0} = (\psi_0, \psi_1) &\in H \times L^2(\Omega).
\end{cases}
\end{equation}
When $W$ satisfies the time-dependent geometric control condition, Assumption \ref{TGCCp}, we prove an observability estimate with observability constant $C_1$ that does not depend on the starting time $t_0$.
\begin{proposition}\label{uniobserveprop}
Let $W \in C^0_u(\overline{\Omega}, [0,\infty))$. If $\p\Omega \neq \emptyset,$ assume that no generalized bicharacteristic has contact of infinite order with $\p\Omega \times (0,\infty)$, that is $\Gc^{\infty}=\emptyset$. If $W$ satisfies Assumption \ref{TGCCp}, then for each $T \geq T_0$, there exists $C_1>0,$ such that for all  $\psi$ solving the wave equation \eqref{WEb} and all starting times $t_0 \geq 0$, then
\begin{equation}\label{eq:uniformobserve}
E(\psi,t_0) \leq C_1  \int_{t_0}^{t_0+T} \int_{\Omega} W |\p_t \psi|^2 dx dt. 
\end{equation}
\end{proposition}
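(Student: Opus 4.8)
The plan is to reduce the uniform (starting-time independent) observability inequality to the classical time-independent observability inequality for the wave equation, via a compactness/contradiction argument, exploiting the fact that the problem is autonomous in the wave operator $\p_t^2 + A$ and that the only time-dependence sits in the weight $W$, which is uniformly continuous in both variables. First I would recall that under the hypothesis $\Gc^\infty = \emptyset$ together with the geometric control condition (for a fixed starting time), the classical result of Bardos–Lebeau–Rauch / Burq–Gérard gives observability: for each $t_0$ there is $C(t_0)$ with $E(\psi, t_0) \leq C(t_0) \int_{t_0}^{t_0+T} \int_\Omega W(x,t) |\p_t \psi|^2\, dx\, dt$. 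The content of the proposition is that $C(t_0)$ may be chosen independent of $t_0$; note that by time-translation of the wave equation it suffices to bound $E(\psi,0)$ for solutions with data at $t=0$, but with the weight $W(x, t_0 + t)$ for arbitrary $t_0$, so really I must handle the whole family of weights $W_{t_0}(x,t) := W(x, t_0 + t)$ simultaneously.

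The key steps, in order: (1) Fix $T \geq T_0$. Suppose for contradiction that no uniform constant exists; then there are sequences $t_0^{(n)} \geq 0$ and solutions $\psi^{(n)}$ of \eqref{WEb} (with data at time $0$, after translating) normalized so that $E(\psi^{(n)}, 0) = 1$ while $\int_0^{T} \int_\Omega W(x, t_0^{(n)} + t)\, |\p_t \psi^{(n)}|^2\, dx\, dt \to 0$. (2) By uniform continuity and uniform boundedness of $W$, the translated weights $W_n(x,t) := W(x, t_0^{(n)} + t)$ form a relatively compact family in $C^0(\overline{\Omega} \times [0,T])$ (Arzelà–Ascoli: equicontinuous and equibounded), so after passing to a subsequence $W_n \to W_\infty$ uniformly on $\overline\Omega \times [0,T]$, where $W_\infty \geq 0$ and, crucially, $W_\infty$ inherits the geometric control condition: for every unit-speed generalized geodesic $\gamma$, $\frac1T\int_0^T W_\infty(\gamma(t),t)\, dt \geq \Cm$, since this bound holds uniformly for each $W_n$ and passes to the limit. (3) Simultaneously, $\psi^{(n)}$ is bounded in the energy space, so a subsequence converges weakly to some $\psi^{(\infty)}$ solving \eqref{WEb}; the defect/microlocal defect measure $\mu$ associated to the (possibly nonconvergent strongly) sequence $\nabla_{t,x}\psi^{(n)}$ propagates along the generalized bicharacteristic flow (here $\Gc^\infty = \emptyset$ is used to guarantee the propagation holds through the boundary and that the flow is well-defined), and the vanishing of $\int_0^T \int W_n |\p_t\psi^{(n)}|^2$ forces $\mu$ to vanish on $\{W_\infty > 0\}$ lifted to phase space over $(0,T)$. (4) Invoking geometric control for $W_\infty$ on time intervals of length $T \geq T_0$: every bicharacteristic over $[0,T]$ meets $\{W_\infty > 0\}$, so propagation of $\mu$ forces $\mu \equiv 0$, hence $\psi^{(n)} \to 0$ strongly in the energy norm over a slightly shorter time interval; combined with energy conservation for \eqref{WEb} and a standard observability-type argument (or a unique continuation step to remove the invisible solution $\psi^{(\infty)}$), this contradicts $E(\psi^{(n)},0) = 1$.

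The main obstacle I anticipate is step (2)–(4) interface: making the defect measure argument work uniformly as the weight varies along the sequence. In the classical single-weight proof the weight is fixed, and the measure is tested against it; here the weight itself moves, so I need to be careful that the limiting measure $\mu$ annihilates $W_\infty$ (not $W_n$ for fixed $n$) — this requires combining the uniform convergence $W_n \to W_\infty$ with the boundedness of $|\p_t \psi^{(n)}|^2$ as a measure, i.e. writing $\int W_n |\p_t \psi^{(n)}|^2 = \int W_\infty |\p_t \psi^{(n)}|^2 + \int (W_n - W_\infty)|\p_t \psi^{(n)}|^2$ and controlling the second term by $\|W_n - W_\infty\|_{L^\infty} \cdot E_0 \cdot T \to 0$. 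A secondary technical point is handling the boundary: I would invoke the generalized bicharacteristic flow machinery of Melrose–Sjöstrand as packaged in the appendix, using $\Gc^\infty = \emptyset$ exactly as in the time-independent theory so that the propagation of the defect measure is valid; and I would need the geometric control hypothesis to be stated for generalized geodesics, which it is (Assumption \ref{TGCCp}). The contradiction argument is robust precisely because $T$ is fixed and only the compact family of weights varies, so no quantitative control of $C_1$ in terms of $T$ or $\Cm$ is needed — qualitative existence suffices.
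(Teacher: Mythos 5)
Your proposal is correct in outline and contains the paper's two decisive ingredients: the contradiction argument applied to time-translated solutions and weights, and the Arzel\`a--Ascoli compactness of the translated weights $W_n(x,t)=W(x,t_0^{(n)}+t)$, with the limit $W_\infty$ inheriting the control condition and the exchange of $W_n$ for $W_\infty$ controlled by $\|W_n-W_\infty\|_{L^\infty}\cdot 2T\,E(\psi^{(n)},0)$. The difference from the paper is organizational rather than conceptual: the paper does not re-run the microlocal analysis inside the uniform argument. It first proves a fixed-interval observability inequality for a fixed observation set (Lemma \ref{observeprop}, built from the weak inequality with error terms, Lemma \ref{weakobserve}, together with the characterization of invisible solutions $N_T=\{c\}$, Lemma \ref{invisiblelemma}), and then in the proof of Proposition \ref{uniobserveprop} applies that inequality as a black box with the single, $j$-independent observation set $\{W_\infty>\Cm/2\}$ (which satisfies Assumption \ref{ftGCC}), absorbing the error term $C_2\|W_j-W_\infty\|_{L^\infty}\int_0^T\int_\Omega|\p_t v_j|^2\,dx\,dt\le \tfrac12 E(v_j,0)$ into the left-hand side. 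Your route folds the defect-measure propagation directly into the contradiction sequence; this can be made to work, but it forces you to confront, inside the same argument, exactly the technical points the paper isolates beforehand: (i) your sequence is normalized only in energy, so it is not automatically bounded in $H^1(\Omega\times(0,T))$ --- in the Neumann or boundaryless case you must first subtract means/constants before the defect-measure construction of Appendix \ref{defect} (which requires a sequence weakly converging to zero) applies; and (ii) the weak limit $\psi^{(\infty)}$ need not vanish, and showing it carries no energy is precisely the invisible-solution step (finite dimensionality of $N_T$, the $\p_t$-eigenvalue argument, elliptic unique continuation), which your phrase ``a standard observability-type argument (or a unique continuation step)'' compresses but which is where the actual work lies, particularly since the paper's point is that no mean-zero hypothesis on the data, as in \cite{LRLTT}, is assumed. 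What the paper's modular structure buys is that the uniformity in $t_0$ becomes a short, soft perturbation argument on top of a fixed inequality with a $j$-independent constant; what your merged argument would buy, if written out in full, is a single self-contained proof, at the cost of repeating the defect-measure and unique-continuation machinery with the moving weight present.
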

\begin{remark}\label{r:data}
This result offers two improvements over the time-dependent observability inequality \cite[Theorem 1.8]{LRLTT}. First, the constant $C_1$ is uniform in the starting time $t_0$. As will be seen in Section \ref{s:bdecaythm}, this is the crucial ingredient in the proof of uniform stabilization rates for solutions to the damped wave equation \eqref{TDWEb}. Second, when $\p \Omega =\emptyset$ or the boundary condition is Neumann, Proposition \ref{uniobserveprop} removes technical assumptions on the initial data. In particular \cite{LRLTT} requires 
\begin{equation}
\int \psi_0 dx = \int \psi_1 dx =0.
\end{equation}
In that paper see page 5 and their definition of $L^2_0(\Omega)$. We do not impose such a requirement.
 
Note, it is possible to write a solution $\psi$ of \eqref{WEb}, as $\psi=\vphi+a+bt$, such that $\vphi$ also solves \eqref{WEb} and $\vphi, \p_t \vphi$ have zero mean. However, when plugging $\psi=\vphi+a+bt$ into \eqref{eq:uniformobserve}, notice 
\begin{equation}
	W|\p_t \psi|^2 = W|\p_t \vphi|^2+2 bW \p_t \vphi + W b^2.
\end{equation}
The term $W \p_t \vphi$ is not necessarily orthogonal to constants, so $2bW \p_t \vphi$ has an indefinite sign. So although the requirement of $\psi_0$ having mean zero can be removed in this manner, the requirement on $\psi_1$ cannot be removed in this same way.
\end{remark}
%


\subsection{Functional Analysis Framework}
The equation \eqref{TDWE} can be rewritten as a system of first order equations 
$$
\p_t \begin{pmatrix} u \\ v \end{pmatrix} = \Ac(t) \begin{pmatrix} u \\ v \end{pmatrix}, \quad \Ac(t) = \begin{pmatrix} 0 & 1 \\ \Delta & -W(t) \end{pmatrix}, \quad U =\begin{pmatrix} u_0 \\ u_1 \end{pmatrix} \in \Hc,
$$
where $\Hc=H \times L^2(M)$. When $\Ac$ is autonomous it is the generator of a $C^0$ contraction semigroup $e^{t\Ac}$. Because $\nm{U}_{\Hc} \simeq E(u,t)^{\frac{1}{2}},$ uniform stabilization estimates can be obtained by estimating $\|e^{t\Ac}\|_{\Lc(\Hc)}$ via resolvent estimates on $\Ac$. When $\Ac(t)$ is time-dependent, there is no longer a semigroup structure, instead solutions are part of a uniform family, also called an evolution family. 

\subsection{Literature Review}
When $M$ is a manifold without boundary and $W$ is autonomous, \cite{RauchTaylor1975} show that the geometric control condition (GCC) implies exponential decay. The techniques of \cite{Ralston1969} can be used to show the GCC is necessary for exponential decay. The autonomous GCC results were extended to manifolds with boundary in  \cite{BardosLebeauRauch1992, BurqGerard1997}. Study of time-dependent damping using microlocal methods goes back to \cite{RauchTaylor1975b} where they show, if growing eigenmodes can be ruled out, then exponential decay of energy holds for time periodic damping satisfying a GCC hypothesis. 
Using a different approach \cite{LRLTT} show that time periodic $W$ satisfying a slightly different GCC hypothesis (Assumption \ref{ftGCC} in this paper) implies exponential decay and discuss several explicit examples. Note that any time periodic damping which satisfies the GCC hypotheses of \cite{LRLTT} also satisfies Assumption \ref{TGCCp}. For further discussion of time periodic damping see \cite{wirthperiodic, PaunonenSeifert2019}. This paper, and indeed \cite{BardosLebeauRauch1992} and \cite{LRLTT}, prove exponential stability for the damped wave equation via observability of the standard wave equation from $W$. This approach goes back to \cite{Haraux1989}. Applying this observability approach previously required time periodicity of the damping. This paper shows how to do so for general time dependence. 
For related work on time-dependent observability see \cite{Shao2019}.

For $W$ autonomous, the best possible exponential decay rate was computed by \cite{Lebeau1996}, in terms of long time averages of the damping and the spectral abscissa of the stationary operator. This was extended to damping given by a $0$th order pseudodifferential operator in \cite{KeelerKleinhenz2023}. The challenge in generalizing these results to time-dependent damping is finding a replacement for the stationary operator. Despite this, the Gaussian beam methods of those papers are used to prove the necessity of the time-dependent geometric control condition (TGCC) for exponential uniform stabilization in Theorem \ref{lowerboundthm}

One way for the TGCC to fail is for the damping to approach 0 as $t \ra \infty$. This is a natural setting to consider from a physical perspective. Damping materials cause decay of the mechanical energy of waves by converting kinetic energy into heat, and this heat can decrease the efficacy of the damping \cite{lz23, drda12}. 

Damping converging to $0$ as $t \ra \infty$ has been well-studied for $M=\Rn$. Results go back to \cite{Matsumura1977, Uesaka1980} which considered damping $W(x,t) \geq (t+1)^{-1}$ and obtained polynomial uniform stabilization rates. See also \cite{Mochizuki1976}. This was partially generalized in \cite{MochizukiNakazawa1996, MochizukiNakazawa2001}, which showed that damping satisfying
\begin{equation}
	W(x,t) \geq \frac{1}{(1+t) \log(1+t) \log(\log(1+t)) \cdots \log^{[n]}(1+t)},
\end{equation} 
must have decay, for some $C>0$
\begin{equation}
	\limt E(u,t) (\log^{[n]}(1+t))^{C}=0.
\end{equation}
Note that this is not a uniform stabilization rate. 

Improvements can be made when the damping is restricted to only depend on time. When $W(t)=\mu(t+1)^{-1},$ \cite{Wirth2004} obtained the sharp uniform stabilization rate, setting $\alpha = \min(\mu,2)$ 
\begin{equation}
	E(u,t) \leq E(u,0) (1+t)^{-\alpha}.
\end{equation}
This rate agrees with the qualitative polynomial decay guaranteed by Theorem \ref{bdecaythm}.
When $\limsup_{t \ra \infty} t W(t) < M,$ and the damping satisfies some symbol style estimates, then \cite{Wirth2006} obtained a uniform stabilization rate 
\begin{equation}
	E(u,t) \leq E(u,0) \exp(-\Sigma(t)),
\end{equation}
which agrees with Theorem \ref{bdecaythm}. 
On the other hand when $\limt t W(t) = \infty,$ and the damping satisfies the same symbol style estimates, \cite{Wirth2007} obtained a rate 
\begin{equation}
	\limt E(u,t)\left(\int_0^t \frac{ds}{W(s)}\right) =0.
\end{equation}
Note this is not a uniform stabilization rate. Furthermore, the rate provided by this result is weaker than Theorem \ref{bdecaythm}. For example with $W=(1+t)^{-\beta}$ with $\beta \in (0,1)$, this result provides polynomial decay, while Theorem \ref{bdecaythm} gives sub-exponential decay.
Analogous polynomial rates for $W(x,t) = a(x) b(t)$, with $b$ decreasing and $W(t) \simeq (1+t)^{-\beta}$ with $\beta \in (0,1)$ were proved in \cite{Kenigson2011}. 
The symbol assumptions of \cite{Wirth2006,Wirth2007} were relaxed in \cite{HirosawaWirth2008} by considering small oscillations around such functions. 
This was relaxed further to damping $W$, not necessarily decreasing, satisfying $(1+t)^{-\beta} \simeq W(t)$ for $\beta<1$ in \cite{vjdl}. Note that Theorem \ref{bdecaythm} provides decay rates for more general damping functions on manifolds. For example the dependence on $x$ and $t$ can be linked, $W$ can equal $0$ at some points, and the time-dependence need not be polynomial. Theorem \ref{lowerboundthm} bounds these decay rates from below. 

\subsection{Outline}
In Section \ref{gaussianbeamsection}, Gaussian beams are used to obtain lower bounds on uniform stabilization rates, which are used to prove Theorem \ref{lowerboundthm}. 
In Section \ref{observesection} the uniform time-dependent wave equation observability inequality Proposition \ref{uniobserveprop} is proved.
In Section \ref{s:bdecaythm}, the uniform stabilization rate for damping potentially going to $0$ as $t \ra \infty$, Theorem \ref{bdecaythm}, is proved. 
In Section \ref{examplesection}, upper and lower bounds for uniform stabilization rates are proved for three  example classes of damping which fail to satisfy Assumption \ref{TGCCp}. Uniform stabilization rates are guaranteed using an observability inequality approach and lower bounds on these rates are obtained using Theorem \ref{lowerboundthm}.  In two of the three cases these bounds coincide in a manner that indicates the correct qualitative rate has been obtained. Appendix A provides necessary background on generalized null bicharacteristics and defect measures on manifolds with boundary. 
Appendix B contains a standard fact about weak continuity of solutions of the wave equation and a short-time observability inequality for the wave equation when the observation window is the whole manifold. 

\textbf{Acknowledgments} I would like to thank Andr\'as Vasy for proposing this question to me and for his helpful comments throughout the course of this project. I would also like to thank the anonymous referees of the various versions of this paper for their thoughtful comments. I am also thankful to Jared Wunsch, Ruoyu P.T. Wang, Nicolas Burq, Emmanuel Tr\'elat, and Willie Wong for helpful conversations and correspondence.

\section{Lower bounds on uniform stabilization}\label{gaussianbeamsection}
In this section we recall the Gaussian beam construction of \cite{Ralston1982}. 
Gaussian beams are quasi-solutions $u_k$ of the wave equation which concentrate along a single geodesic $\gamma$. 
We next modify these Gaussian beams to produce quasi-solutions $v_{k,j}$ of the damped wave equation \eqref{TDWE}. 
To do so, we first approximate the damping $W$ by a smooth $W_j$, and then define $v_{k,j}$ by multiplying $u_k$ by a factor, $G_j(\gamma, t_0, t)$, which exponentially decreases according to the total amount of $W_j$ encountered along the geodesic $\gamma$ between time $t_0$ and $t$. 
As a result of this construction, these quasi-solutions of \eqref{TDWE} have energy converging to $G_j(\gamma,t_0,t)^2$ as $k \ra \infty$. 
We next show that the quasi-solutions $v_{k,j}$ of \eqref{TDWE} are ``nearby" to genuine solutions $\omega_k$ of \eqref{TDWE} with the same initial data. 
From this, we conclude that these genuine solutions have energy $E(\omega_k,t)$ close to $G(\gamma,t_0,t)^2$, which is the total amount of $W$ encountered along the geodesic $\gamma$. 
We finally connect $G(\gamma,t_0,t)$ to $L_{\infty}$ and $\Sigma(t)$ and use the constructed solutions of \eqref{TDWE} to prove Theorem \ref{lowerboundthm}.
\subsection{Gaussian beam and Quasi-solution construction}
We begin by recalling the Gaussian beam construction. Suppose $\Mc(t)$ is a symmetric $n \times n$ matrix-valued function with positive definite imaginary part. Let $\gamma(t)$ be a unit-speed geodesic in $\Rn$ and set 
$$
\psi(x,t) = \<\gamma'(t), x-\gamma(t)\> + \frac{1}{2} \<\Mc(t)(x-\gamma(t)), (x-\gamma(t))\>.
$$
Let $\Bc \in C^{\infty}(\Rn \times \Rb)$, then for $k \in \mathbb{N}$ define 
\begin{equation}\label{eq:gaussianbeam}
u_k(x,t) = k^{-1+\frac{n}{4}} \Bc(x,t) e^{ik \psi(x,t)}.
\end{equation}
Such a function $u_k$, is called a Gaussian beam. Given a geodesic $\gamma$, a function of this form can be constructed which is a quasi-solution of the wave equation. 
\begin{lemma}\label{l:Ralston}\cite{Ralston1982} Fix $T>0, t_0 \geq 0,$ and $(x_0, \xi_0) \in S^* \Rn$.  Let $\gamma(t)$ be the unit-speed geodesic with $(\gamma(t_0), \gamma'(t_0)) = (x_0, \xi_0)$. There exists $C>0$,  $\Bc \in \Ci(\Rn \times \Rb)$, and an $n \times n$ symmetric matrix-valued function $t \mapsto \Mc(t)$, so that for $u_k$ as in \eqref{eq:gaussianbeam} 
	\begin{enumerate}
		\item The $u_k$ are quasi-solutions of the wave equation
		\begin{equation}
			\sup_{t \in  [t_0,t_0+T]} \ltwo{ \p_t^2 u_k(\cdot, t) - \Delta_g u_k(\cdot, t) } \leq Ck^{-\frac{1}{2}}.
		\end{equation}
		\item The $u_k$ have small $L^2$ norm
		\begin{equation}
			\sup_{t \in [t_0, t_0+T]} \ltwo{u_k(\cdot, t)} \leq C k^{-1}.
		\end{equation}
		\item The energy of $u_k$ converges to $1$ uniformly for $t \in [t_0,t_0+T]$ as $k \ra \infty$
		\begin{equation}
			\sup_{t \in [t_0, t_0+T]}|E(u_k, t)-1| \leq \frac{CT}{k^{\frac{1}{2}}}.
		\end{equation}
	\end{enumerate}
\end{lemma}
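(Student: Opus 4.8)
The plan is to construct the Gaussian beam by the standard WKB/eikonal-transport hierarchy, following Ralston, and then to verify the three claimed estimates by direct computation. First I would apply the operator $\p_t^2 - \Delta_g$ to $u_k = k^{-1+n/4} \Bc(x,t) e^{ik\psi(x,t)}$ and organize the result in powers of $k$. The leading term, of order $k^2$, has coefficient $\Bc\left(-(\p_t\psi)^2 + |\nabla_g \psi|^2\right)$; choosing $\psi$ so that this \emph{eikonal} factor vanishes to high order along $\gamma$ is what forces the phase to have the quadratic-in-$(x-\gamma(t))$ form stated, with $\gamma'(t)$ appearing as the linear coefficient (so that $\psi$ and its first $x$-derivatives vanish on $\gamma$, and the Hessian is $\Mc(t)$). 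The requirement that the eikonal term vanish to second order along $\gamma$ produces a matrix Riccati equation for $\Mc(t)$ along $\gamma$; the key structural point, due to Ralston, is that if $\Im \Mc(t_0) > 0$ then $\Im \Mc(t)$ stays positive definite for all $t$, which is exactly what makes $e^{ik\psi}$ a genuine Gaussian concentrating on $\gamma$ with width $\sim k^{-1/2}$. The order-$k^1$ term gives a transport equation for $\Bc$ along $\gamma$ which can be solved with, say, $\Bc(\gamma(t_0),t_0) \neq 0$; the remaining order-$k^0$ term is simply left as the error.

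Next I would turn the concentration property into the quantitative bounds. Because $\Im\Mc(t)$ is positive definite and depends continuously on $t$ on the compact interval $[t_0, t_0+T]$, there is a uniform lower bound $\Im\Mc(t) \geq cI$, hence $|e^{ik\psi(x,t)}| = e^{-k \<\Im\Mc(t)(x-\gamma(t)),(x-\gamma(t))\>/2} \leq e^{-ck|x-\gamma(t)|^2}$. A Gaussian integral in the $n$ variables $x - \gamma(t)$ then gives $\ltwo{u_k(\cdot,t)}^2 \lesssim k^{-2+n/2}\cdot k^{-n/2} = k^{-2}$, which is item (2); the factor $k^{-1+n/4}$ was chosen precisely so this works, and the same factor makes $\nabla u_k$ of size $O(1)$ in $L^2$ since $\nabla(e^{ik\psi})$ brings down a factor $k(x-\gamma(t))$ of size $O(k^{1/2})$ in $L^2$. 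For item (1), from the WKB construction the residual $\p_t^2 u_k - \Delta_g u_k$ is, up to lower order, $k^{-1+n/4}$ times a bounded smooth amplitude times $e^{ik\psi}$ times a factor vanishing to appropriate order along $\gamma$ (the leftover $k^0$ term plus the tails of the truncated eikonal/transport expansions); estimating $|x-\gamma(t)|^j e^{-ck|x-\gamma(t)|^2}$ in $L^2$ costs $k^{-j/2-n/4}$, and bookkeeping the powers of $k$ yields the bound $Ck^{-1/2}$ uniformly on $[t_0,t_0+T]$.

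For item (3) I would compute $E(u_k,t) = \tfrac12\int_M |\nabla_g u_k|^2 + |\p_t u_k|^2\,dx_g$, expand the derivatives, and note that to leading order $\nabla_g u_k \approx ik\,\nabla_g\psi\, u_k$ and $\p_t u_k \approx ik\,\p_t\psi\, u_k$, so that $|\nabla_g u_k|^2 + |\p_t u_k|^2 \approx k^2(|\nabla_g\psi|^2 + |\p_t\psi|^2)|u_k|^2$. On $\gamma$ one has $\nabla_g\psi = \gamma'(t)$ and $\p_t\psi = -|\gamma'(t)|^2 = -1$ (unit speed), so $|\nabla_g\psi|^2 + |\p_t\psi|^2 = 2$ on $\gamma$, and near $\gamma$ it differs from $2$ by $O(|x-\gamma(t)|)$. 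A Gaussian-moment computation then gives $E(u_k,t) = \tfrac12 \cdot 2 \cdot \int k^2 |u_k|^2 + (\text{lower order})$, and the normalization makes $\int k^2|u_k|^2 \to 1$; tracking the correction terms (the $O(|x-\gamma(t)|)$ from the phase, cross terms between the $ik\nabla\psi\,\Bc$ piece and the $\nabla\Bc$ piece, and the $t$-dependence of $\Mc$) produces the stated uniform rate $CT k^{-1/2}$, with the $T$ appearing because the constants in the transport equation and the Riccati flow accumulate over the interval of length $T$.

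I expect the main obstacle to be the eikonal/Riccati analysis for $\Mc(t)$: one must show the Riccati equation for the Hessian has a global solution on $[t_0, t_0+T]$ with $\Im\Mc(t)$ staying positive definite and bounded below. This is the crux because a blow-up or degeneration of $\Im\Mc$ would destroy both the $L^2$ concentration estimates and the energy normalization; the standard resolution is Ralston's trick of writing $\Mc = \dot{Y}Y^{-1}$ for a matrix solution $Y$ of the linear Jacobi equation along $\gamma$, which both linearizes the flow (giving global existence) and, via a Wronskian-type identity, shows $\Im\Mc(t)$ stays positive definite once it starts positive definite. Everything else is careful but routine Gaussian bookkeeping in powers of $k^{1/2}$, uniform on the compact interval $[t_0, t_0+T]$, and since $\gamma$ and $\psi$ only need to be described on a fixed compact neighborhood we may work in a single coordinate chart (or patch finitely many), which is why the lemma is stated on $\Rn$.
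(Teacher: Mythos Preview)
Your proposal is correct and outlines the standard Ralston construction. Note, however, that the paper does not actually prove this lemma: it is simply stated with the citation \cite{Ralston1982} and used as a black box in the subsequent quasi-solution construction. So there is no ``paper's own proof'' to compare against here; your sketch is essentially a summary of Ralston's original argument, and the key structural point you identify --- that the Riccati equation for $\Mc(t)$, written as $\Mc = \dot{Y}Y^{-1}$ for a Jacobi-field matrix $Y$, preserves positive-definiteness of $\Im\Mc$ --- is indeed the crux of why the construction works globally on $[t_0, t_0+T]$.
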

Using coordinate charts and a partition of unity, this construction can be extended to manifolds, which results in a sequence $\{u_k\} \subset C^{\infty}(M \times \Rb)$ such that the appropriate analogues of statements 1), 2) and 3) hold. 

Before using these Gaussian beams to construct quasi-solutions of \eqref{TDWE}, some necessary preliminaries are introduced. 

First, at points in the construction of the quasi-solutions of \eqref{TDWE} we will require an $L^{\infty}$ bound on $\p_t W$, $\nabla W$, and $\nabla^2 W$. Because $W$ is only $C^0$ we replace $W$ by a smooth function $W_j$ which approximates $W$ in $L^{\infty}$. In particular, let $\{W_j\}_{j=1}^{\infty} \in C^{\infty}(M \times [0,\infty))$ be a sequence of nonnegative functions, such that 
\begin{equation}\label{eq:WjtoW}
	\nm{W_j-W}_{L^{\infty}(M \times [0,\infty))} < \frac{1}{j}.
\end{equation}
Such a sequence is guaranteed to exist by the density of $C^{\infty}(M \times [0,\infty))$ in $C^0(M \times [0,\infty))$. 

Now we define functions $G(\gamma, t_0,t)$, resp. $G_j(\gamma, t_0,t)$, which exponentially decrease according to the amount of damping $W$, resp. $W_j$, encountered along the geodesic $\gamma$ from $t_0$ to $t$. Given $(x_0, \xi_0) \in S^*M$, $t_0 \geq 0$, and $\gamma(s)$ the unit-speed geodesic with $(\gamma(t_0), \gamma'(t_0))=(x_0, \xi_0)$, define
\begin{align}
G(\gamma,t_0,t) &= \exp\left( - \int_{t_0}^{t} W(\gamma(s), s) ds \right), \label{propagatordef} \\
G_j(\gamma,t_0,t) & =\exp\left(- \int_{t_0}^{t} W_j(\gamma(s), s) ds \right) \label{propagatordef1}.
\end{align}

As a final preliminary, we point out that the $G$'s are always bounded by 1 and that the convergence of $W_j$ to $W$ implies that $G_j$ converges to $G$ as well. 
\begin{lemma}\label{l:Gbounded}
For $G$ and $G_j$ as defined above
\begin{enumerate}
	\item 
	\begin{equation}
		|G(\gamma,t_0,t)| \leq 1,\qquad |G_j(\gamma,t_0,t)| \leq 1.
	\end{equation}
	\item 
	For any $T\geq 0$, there exists $J \in \mathbb{N}$, such that for $j \geq J$ and any $t_0 \geq 0$
	\begin{equation}
		\sup_{t \in [t_0,t_0+T]} |G(\gamma,t_0,t)^2- G_j(\gamma,t_0,t)^2| \leq \frac{4T}{j}.
	\end{equation}
\end{enumerate}

\end{lemma}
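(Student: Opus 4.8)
The plan is to prove both parts directly from the definitions \eqref{propagatordef}--\eqref{propagatordef1} and the $L^\infty$ bound \eqref{eq:WjtoW}.

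For part (1): since $W \geq 0$ and $W_j \geq 0$ by hypothesis, the integrals $\int_{t_0}^t W(\gamma(s),s)\,ds$ and $\int_{t_0}^t W_j(\gamma(s),s)\,ds$ are nonnegative for $t \geq t_0$, so $G(\gamma,t_0,t) = \exp(-\int_{t_0}^t W\,ds) \leq e^0 = 1$, and likewise for $G_j$. This is immediate; no real obstacle here.

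For part (2): I would write $G^2 - G_j^2 = \exp\bigl(-2\int_{t_0}^t W(\gamma(s),s)\,ds\bigr) - \exp\bigl(-2\int_{t_0}^t W_j(\gamma(s),s)\,ds\bigr)$ and use the elementary inequality $|e^{-a} - e^{-b}| \leq |a-b|$ for $a,b \geq 0$ (which follows from the mean value theorem, since the derivative of $x \mapsto e^{-x}$ has absolute value at most $1$ on $[0,\infty)$). Applying this with $a = 2\int_{t_0}^t W\,ds \geq 0$ and $b = 2\int_{t_0}^t W_j\,ds \geq 0$ gives
\begin{equation}
|G(\gamma,t_0,t)^2 - G_j(\gamma,t_0,t)^2| \leq 2\left|\int_{t_0}^t \bigl(W(\gamma(s),s) - W_j(\gamma(s),s)\bigr)\,ds\right| \leq 2\int_{t_0}^t |W - W_j|(\gamma(s),s)\,ds.
\end{equation}
For $t \in [t_0, t_0+T]$ the interval of integration has length at most $T$, so this is bounded by $2T \nm{W_j - W}_{L^\infty(M\times[0,\infty))}$. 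Choosing $J$ so that $1/J < \tfrac{1}{2}$ — more simply, any $J \geq 1$ works since \eqref{eq:WjtoW} already gives $\nm{W_j-W}_{L^\infty} < 1/j$ — we get the bound $2T/j < 4T/j$ for all $j \geq J$; in fact $J = 1$ suffices, and the stated $4T/j$ is not even tight. The only mild point to be careful about is that the geodesic $\gamma$ and the values $t_0$ are arbitrary, but since the $L^\infty$ norm of $W_j - W$ is taken over all of $M \times [0,\infty)$, the estimate is uniform in $\gamma$ and $t_0$, which is exactly what the statement asks for.

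I do not anticipate a genuine obstacle in this lemma; it is a routine consequence of nonnegativity, the $1$-Lipschitz bound on $e^{-x}$, and the uniform approximation \eqref{eq:WjtoW}. If anything, the "hard part" is purely bookkeeping: making sure the constant and the choice of $J$ are stated consistently with \eqref{eq:WjtoW}, and noting that one factor of $2$ comes from the square and the length-$T$ integration absorbs the rest.
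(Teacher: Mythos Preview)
Your proof is correct. Part (1) is identical to the paper's argument. For Part (2) you take a slightly different and in fact cleaner route: you use the $1$-Lipschitz bound $|e^{-a}-e^{-b}|\le |a-b|$ for $a,b\ge 0$ to go directly to $|G^2-G_j^2|\le 2T\|W-W_j\|_{L^\infty}<2T/j$, whereas the paper factors $G^2-G_j^2=\exp(-2\!\int W)\bigl(1-\exp(-2\!\int(W_j-W))\bigr)$, bounds the first factor by $1$, and then Taylor-expands $1-e^{-2(t-t_0)/j}$ to get $4(t-t_0)/j$ for $j$ sufficiently large. Your approach avoids the Taylor step entirely, yields the sharper constant $2T/j$, and shows (as you note) that any $J\ge 1$ works, so the ``there exists $J$'' clause is vacuous in your version. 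Both arguments are uniform in $\gamma$ and $t_0$ for the same reason: the bound on $W-W_j$ is an $L^\infty$ bound over all of $M\times[0,\infty)$.
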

\begin{proof}
The bounds in part 1 follow immediately from the definition of $G$ and $G_j$. \\
To see the convergence in part 2
\begin{align}
|G&(\gamma,t_0,t)^2- G_j(\gamma,t_0,t)^2|\\
&= \left|\exp\left(- 2\int_{t_0}^{t} W(\gamma(s),s) ds\right) - \exp\left(-2\int_{t_0}^{t} W_j(\gamma(s),s) ds\right) \right|\\
&=\exp\left(-2\int_{t_0}^{t} W(\gamma(s),s) ds\right) \left| 1 - \exp\left(-2\int_{t_0}^{t} W_j(\gamma(s), s) - W(\gamma(s),s) ds \right) \right|. 
\end{align}
Now note that the first factor $|\exp(-2\int W ds)| \leq 1$. Combining this with the convergence of $W_j$ to $W$ in \eqref{eq:WjtoW} we obtain 
\begin{align}
|G(\gamma,t_0,t)^2- G_j(\gamma,t_0,t)^2|&\leq \left|1-\exp\left(\frac{-2(t-t_0)}{j}\right)\right| \\
&\leq \frac{2(t-t_0)}{j} + O\left(\frac{(t-t_0)^2}{j^2}\right) \leq \frac{4(t-t_0)}{j},
\end{align}
where the second inequality follows from Taylor's theorem for $j$ large enough, since $(t-t_0) \leq T$. Taking the supremum over $t \in [t_0, t_0+T]$ gives part 2. 
\end{proof}
Now, we define our prospective quasi-solution of the damped wave equation \eqref{TDWE} as  
\begin{equation}\label{vkdef}
v_{k,j}(x,t) = G_j(\gamma, t_0, t) u_k(x,t). 
\end{equation}
The intuitive idea is that solutions of \eqref{TDWE} decrease exponentially as they encounter the damping. Since $u_k$ is concentrated along the geodesic $\gamma$, to turn it into a quasi-solution of \eqref{TDWE} it should decay exponentially in proportion to the damping encountered along $\gamma$. 

The construction \eqref{vkdef} comes from \cite{Lebeau1996}, where $W$ is autonomous. When $W$ is autonomous, $G$ is constructed as the propagator of the defect measure for the damped wave equation, and was used in \cite{Klein2017, KeelerKleinhenz2023} to prove analogous statements for matrix valued and pseudodifferential damping respectively. Although we do not consider defect measures here, we are still able to make use of this construction. The key differences here are allowing $G$ to depend on the start time $t_0$, and accounting for the time-dependence of $W$.

We now show that this $v_{k,j}$ is a quasi-solution of \eqref{TDWE} with damping $W_j$, and as $k \ra \infty$ its energy approaches $G_j(\gamma, t_0,t)^2$. 
\begin{proposition}\label{quasiprop}
Given $(x_0, \xi_0) \in S^* M$, $t_0 \geq 0$, $j \in \Nb$ and $T >0$, let $u_k(t,x)$ be as in \eqref{eq:gaussianbeam} and $v_{k,j}$ as in \eqref{vkdef}. There exists a constant $C=C(T,j,t_0) >0,$ so that 
\begin{enumerate}
	\item The $v_{k,j}$ are quasi-solutions of \eqref{TDWE} with damping $W_j$
	\begin{equation}
		\sup_{t \in [t_0,t_0+T]} \ltwo{(\p_t^2 - \Delta_g +2W_j  \p_t) v_{k,j}(\cdot, t) } \leq C k^{-\frac{1}{2}}.
	\end{equation}
	\item The energy of $v_{k,j}$ converges to $G_j(\gamma,t_0,t)^2$ uniformly for $t \in [t_0,t_0+T]$ as $k \ra \infty$
	\begin{equation}
		 \sup_{t \in [t_0,t_0+T]} | E(v_{k,j},t) - G_j(\gamma, t_0, t)^2| \leq \frac{CT}{k^{\frac{1}{2}}}.
	\end{equation}
\end{enumerate}
\end{proposition}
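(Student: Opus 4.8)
The plan is to compute directly, exploiting the fact that $G_j$ depends only on $t$ (not $x$), so that applying the damped wave operator to $v_{k,j} = G_j(\gamma,t_0,t) u_k$ produces only terms involving $\p_t G_j$ and $\p_t^2 G_j$ together with the known estimates for $u_k$ from Lemma \ref{l:Ralston}. Concretely, write $g(t) = G_j(\gamma,t_0,t)$, so that $g'(t) = -W_j(\gamma(t),t) g(t)$ and $g''(t) = (W_j(\gamma(t),t)^2 - \p_t W_j(\gamma(t),t) - \nabla W_j(\gamma(t),t)\cdot\gamma'(t)) g(t)$. Then
\begin{align}
(\p_t^2 - \Delta_g + 2W_j \p_t)(g u_k) &= g(\p_t^2 - \Delta_g) u_k + 2 g' \p_t u_k + g'' u_k + 2 W_j (g' u_k + g \p_t u_k) \\
&= g(\p_t^2 - \Delta_g) u_k + (g'' + 2W_j g') u_k + 2(g' + W_j g)\p_t u_k.
\end{align}
The choice $g' = -W_j g$ makes the coefficient of $\p_t u_k$ vanish identically, which is the whole point of the definition \eqref{vkdef}. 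So the right-hand side reduces to $g\,(\p_t^2 - \Delta_g) u_k + (g'' + 2W_j g') u_k$. The first term is bounded in $L^2$ by $\|g\|_{L^\infty} \cdot C k^{-1/2} \leq C k^{-1/2}$ using part 1 of Lemma \ref{l:Ralston} and $|g| \leq 1$. For the second term, $g'' + 2W_j g' = (g'' ) + 2W_j(-W_j g) = (W_j^2 - \p_t W_j - \nabla W_j \cdot \gamma')g - 2W_j^2 g = -(W_j^2 + \p_t W_j + \nabla W_j\cdot\gamma')g$; since $W_j$ is smooth, on the compact time interval $[t_0,t_0+T]$ this coefficient is bounded by a constant $C(T,j,t_0)$, and then multiplying by part 2 of Lemma \ref{l:Ralston} ($\|u_k\|_{L^2} \leq Ck^{-1}$) gives a bound $C k^{-1} \leq Ck^{-1/2}$. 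Adding the two contributions proves part 1.

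For part 2, I would compute the energy $E(v_{k,j},t) = \frac12\int_M |\nabla_g(g u_k)|^2 + |\p_t(g u_k)|^2 \,dx_g$. Since $g$ is independent of $x$, $\nabla_g(g u_k) = g \nabla_g u_k$, so the gradient term is exactly $g(t)^2 \cdot \frac12\int_M |\nabla_g u_k|^2$. For the time derivative, $\p_t(g u_k) = g' u_k + g\, \p_t u_k$, so $|\p_t(g u_k)|^2 = g^2|\p_t u_k|^2 + 2g g' \Re(\overline{u_k}\,\p_t u_k) + (g')^2 |u_k|^2$. The leading term combines with the gradient term to give $g(t)^2 E(u_k,t)$, which by part 3 of Lemma \ref{l:Ralston} is within $g^2 \cdot CT k^{-1/2} \leq CT k^{-1/2}$ of $g(t)^2 = G_j(\gamma,t_0,t)^2$. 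The two remaining cross and lower-order terms are controlled using $|g|, |g'| \leq C(T,j,t_0)$ (with $|g'| = |W_j g| \leq \|W_j\|_{L^\infty}$, finite since $W_j$ is smooth on the compact set, or even uniformly by \eqref{eq:WjtoW}) together with $\|u_k\|_{L^2} \leq Ck^{-1}$ and $\|\p_t u_k\|_{L^2} \leq \|u_k\|_{H^1} \lesssim E(u_k,t)^{1/2} \leq C$ (bounded by part 3, plus a Poincar\'e-type control of the $L^2$ norm of $u_k$ by part 2), so by Cauchy--Schwarz these terms are $O(k^{-1})$. Taking the supremum over $t \in [t_0,t_0+T]$ and collecting constants yields the claimed bound.

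The only real subtlety — and the step I would be most careful about — is making sure all the constants genuinely only depend on $T$, $j$, and $t_0$ and not on $k$: this requires that the smoothness bounds on $W_j$ and its first two derivatives are taken over the compact region swept out by $\gamma$ on $[t_0,t_0+T]$, which is legitimate since $W_j \in C^\infty(M\times[0,\infty))$ and $M$ is compact. A secondary point is the passage from $\Rn$ to the manifold $M$: one uses the coordinate-chart/partition-of-unity version of Lemma \ref{l:Ralston} noted in the text, where $u_k$ is a finite sum of beams localized in charts and the estimates above are applied chart-by-chart; the operator $\p_t^2 - \Delta_g$ and the energy both localize cleanly, so no new difficulty arises. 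I do not expect any genuine obstacle here — the proposition is essentially bookkeeping built on Lemma \ref{l:Ralston}, with the algebraic cancellation $g' + W_j g = 0$ doing the essential work.
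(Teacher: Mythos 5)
Your part 2 is essentially the paper's argument and is fine, but part 1 has a genuine gap at its central step. You claim that the relation $g'(t)=-W_j g$ makes the coefficient of $\p_t u_k$ vanish identically. It does not: $G_j$ is built from the damping evaluated \emph{along the geodesic}, $\p_t G_j(\gamma,t_0,t)=-W_j(\gamma(t),t)\,G_j(\gamma,t_0,t)$, whereas the operator contains $2W_j(x,t)\p_t$ with $x$ a free spatial variable. After regrouping, the coefficient of $\p_t u_k$ is $2\bigl(W_j(x,t)-W_j(\gamma(t),t)\bigr)G_j$, which vanishes only on the geodesic itself. This term cannot be waved away: $\ltwo{\p_t u_k(\cdot,t)}$ is of order $1$ (the beam carries unit energy), so the crude bound $|W_j(x,t)-W_j(\gamma(t),t)|\leq 2\nm{W_j}_{L^\infty}$ yields only an $O(1)$ residual, not the required $O(k^{-1/2})$.

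The paper's proof spends most of its effort exactly here: it Taylor expands $W_j(\cdot,t)$ about $\gamma(t)$, writing $W_j(x,t)-W_j(\gamma(t),t)=(x-\gamma(t))\cdot\nabla_x W_j(\gamma(t),t)+O(|x-\gamma(t)|^2)$, and then uses the explicit Gaussian structure of $u_k$ (positive definite imaginary part of $\Mc$) with the change of variables $y=k^{1/2}(x-\gamma(t))$ to show that each factor of $(x-\gamma(t))$ paired with the beam costs $k^{-1/2}$ in $L^2$. This is precisely why the smooth approximants $W_j$, with bounded $\nabla W_j$ and $\nabla^2 W_j$, were introduced; without this concentration estimate part 1 is unproved. (A smaller inaccuracy of the same kind occurs in your zeroth-order coefficient, where $2W_j(x,t)g'(t)=-2W_j(x,t)W_j(\gamma(t),t)g$ rather than $-2W_j^2 g$, but that term is harmless since it multiplies $u_k$, whose $L^2$ norm is $O(k^{-1})$.) With the Taylor-expansion-plus-concentration step supplied, your outline matches the paper's proof.
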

\begin{proof}
First, we directly compute
\begin{align}
(\p_t^2 -\Delta_g +2W_j(x,t) \p_t) G_j(\gamma, t_0,t) u_k(x,t)=& G_j (\p_t^2 -\Delta_g) u_k\\
&+ 2 \p_t G_j \p_t u_k + (\p_t^2 G_j) u_k \\
&+ 2 W_j (\p_t G_j) u_k+ 2W_j G_j \p_t u_k.
\end{align}
Now since $\p_t G_j(\gamma,t_0, t)=-W_j(\gamma(t), t)) G_j(\gamma,t_0,t)$, after regrouping terms
\begin{align}
(\p_t^2 -\Delta_g +2W_j \p_t) G_j u_k=& G_j (\p_t^2 -\Delta_g) u_k  \\
&+ \bigg(-\p_t W_j(\gamma(t),t) +W_j^2(\gamma(t),t)  -2W_j (x,t) W_j (\gamma(t), t)\bigg) G_j u_k  \\
&+ 2\bigg(W_j (x,t)-W_j(\gamma(t), t) \bigg) G_j \p_t u_k\label{quasiest}.
\end{align}
The three terms on the right hand side will be estimated in turn. First, the $u_k$ are quasi-solutions of the wave equation by Lemma \ref{l:Ralston} part 1, and $G_j$ is bounded by Lemma \ref{l:Gbounded}, so
\begin{equation}\label{eq:Gjukquasi}
\sup_{t \in [t_0,t_0+T]} \ltwo{G_j (\p_t^2 -\Delta_g) u_k(\cdot, t) } \leq C k^{-\frac{1}{2}}. 
\end{equation}
Second, note that $W_j \in C^{\infty}(M \times [t_0,t_0+T])$ and $M \times [t_0,t_0+T]$ is compact. Therefore $W_j$ and $\p_t W_j$ are bounded on $M \times [t_0,t_0+T]$. Note that this is one of the steps which required us to replace $W$ by $W_j$, as $\p_t W$ need not exist. The boundedness of $W_j$ and $\p_t W_j$, along with the boundedness of $G_j$ from Lemma \ref{l:Gbounded}, means there exists $C>0$ depending on $T,j$ and $t_0,$ such that
\begin{align}
\sup_{t \in [t_0,t_0+T]}& \ltwo{\left(-\p_t W_j(\gamma(t),t)+W_j^2(\gamma(t),t) -2W_j(\cdot,t) W_j(\gamma(t),t)\right) G_j u_k(\cdot,t) } \\
&\leq C \sup_{t\in [t_0,t_0+T]} \ltwo{u_k(\cdot, t)}  \leq C k^{-1}, \label{eq:vkjerror1}
\end{align}
where the final equality follows from Lemma \ref{l:Ralston} part 2. 

To estimate the final term on the right hand side of \eqref{quasiest}, at each time $t$, Taylor expand $W_j$ in the $x$ variable around $\gamma(t)$ 
\begin{equation}
W_j(x,t) = W_j(\gamma(t), t) + (x-\gamma(t)) \cdot \nabla_x W_j(\gamma(t), t) + (x-\gamma(t))\cdot C_{2} \cdot (x-\gamma(t)),
\end{equation}
where $|C_2| \leq \nm{\nabla^2 W_j}_{L^\infty(M \times[t_0,t_0+T])}$. Note this Taylor series expansion is one of the steps which required us to replace $W$ by $W_j$ as $\nabla W$ and $\nabla^2 W$ need not exist. We now have 
\begin{align}
2\bigg(W_j(x,t)-W_j(\gamma(t), t)\bigg) G_j \p_t u_k(x,t)= \bigg(& 2(x-\gamma(t)) \cdot \nabla_x W_j(\gamma(t), t)  \\
&+ (x-\gamma(t)) \cdot C_2 \cdot (x-\gamma(t)) \bigg) G_j \p_t u_k(x,t).
\end{align}
Taking the $L^2$ norm squared of both sides 
\begin{align}
&\ltwo{\bigg(W_j(\cdot,t)-W_j(\gamma(t), t)\bigg) G_j \p_t u_k(\cdot,t) }^2 \\
&\leq \int_M \bigg(|2(x-\gamma(t))\cdot \nabla_x W_j| + |(x-\gamma(t))\cdot C_2 \cdot (x-\gamma(t))| \bigg)^2 \,|G_j \p_t u_k|^2 dx_g.
\end{align}
Using coordinate charts and a partition of unity on $M$, without loss of generality we can assume that the integrand on the right hand side is contained in a single coordinate chart. Using this, the definition of the $u_k$ in \eqref{eq:gaussianbeam}, and letting $C_1 = 2\nm{\nabla W_j}_{L^{\infty}(M \times [t_0,t_0+T])}$
\begin{align}
&\ltwo{\bigg(W_j(\cdot,t)-W_j(\gamma(t), t)\bigg) G_j \p_t u_k(\cdot,t) }^2\\
&\leq \intrn  \bigg( C_{1} |x-\gamma(t)|+|C_2| |x-\gamma(t)|^2 \bigg)^2 \, \bigg|k^{\frac{n}{4}} e^{ik\<x-\gamma(t), \gamma'(t) \>} e^{\frac{ik}{2} \<\Mc(x-\gamma(t)), \, x-\gamma(t)\>} \mathcal{B}(x,t) \bigg|^2 dx \\
&\leq  C k^{\frac{n}{2}} \intrn \bigg(C_{1} |x-\gamma(t)| + |C_2| |x-\gamma(t)|^2 \bigg)^2 \, \left|e^{\frac{ik}{2}(\<\Mc(x-\gamma(t)), \, x-\gamma(t)\>}\right|^2 dx.
\end{align}
Taking the change of variables $k^{1/2} (x-\gamma(t)) = y, dx = k^{-\frac{n}{2}} dy,$ this becomes
\begin{equation}
	\ltwo{2\left(W_j(\cdot,t)-W_j(\gamma(t), t)\right) G_j \p_t u_k(\cdot, t)}^2 \leq  C k^{-1} \intrn |y e^{\frac{i}{2} \<\Mc y,y\>} |^2 dy  \leq C k^{-1},
\end{equation}
where $C$ depends on $T, j$ and $t_0$, and the final integral is finite by the definition of $\Mc$. Finally, taking square roots and then the supremum over $t \in [t_0,t_0+T],$ 
\begin{equation}
	\sup_{t \in [t_0,t_0+T]} \ltwo{(W_j(\cdot,t)-W_j(\gamma(t),t))G_j\p_t u_k(\cdot,t)} \leq Ck^{-1/2}.
\end{equation}
Combining this with \eqref{quasiest}, \eqref{eq:Gjukquasi}, and \eqref{eq:vkjerror1}  gives the desired inequality completing the proof of part 1. 

To see part 2, first compute 
\begin{equation}
	E(v_{k,j},t) = \frac{1}{2} \int_M |G_j(\gamma, t_0, t) \nabla u_k |^2 + |G_j(\gamma, t_0, t) \p_t u_k + \p_t G_j(\gamma, t_0, t) u_k|^2 dx_g. 
\end{equation}
Now note that by the $L^2$ size of $u_k$ from Lemma \ref{l:Ralston} part 2 
\begin{equation}
	\frac{1}{2}\int_M |\p_t G_j(\gamma, t_0, t) u_k|^2 dx_g = |W_j(\gamma(t),t) G_j(\gamma, t_0,t)|^2 \int_M |u_k(x,t)|^2 dx_g \leq C k^{-2}.
\end{equation}
Thus
\begin{align}
	|E(v_{k,j},t)-G_j(\gamma,t_0,t)^2| &\leq  G_j(\gamma, t_0, t)^2 \left|\frac{1}{2}\int_M |\nabla u_k(x,t)|^2 + |\p_t u_k(x,t)|^2 dx_g-1\right|+Ck^{-2} \\
	&\leq \left|E(u_k,t)-1\right| + Ck^{-2} \leq \frac{CT}{k^{\frac{1}{2}}},
\end{align}
where the second inequality follows by the boundedness of $G_j$ from Lemma \ref{l:Gbounded}, and the final inequality follows for $k$ large enough from the uniform convergence of $E(u_k,t) \ra 1$ as $k \ra \infty$ via Lemma \ref{l:Ralston} part 3. 
\end{proof}

We now show that for any geodesic $\gamma$, starting time $t_0$, and duration $T$, there are exact solutions of the damped wave equation \eqref{TDWE} with energy arbitrarily close to $G(\gamma,t_0,t)^2$ for $t \in [t_0, t_0+T]$. This is done by considering $\omega_k$ solutions of \eqref{TDWE} with initial data given by $v_{k,J}$, the quasi-solution defined in Proposition \ref{quasiprop}, for some large fixed $J$. We then show that for $k$ large enough, $E(\omega_k)$ is close to $E(v_{k,J})$. Then we note $E(v_{k,J})$ is close to $G_J^2(\gamma,t_0,t)$ by Proposition \ref{quasiprop}, which in turn is close to $G^2(\gamma,t_0,t)$ by Lemma \ref{l:Gbounded}.
\begin{proposition}\label{energyapproxprop} Suppose $W \in C^0(M \times [0,\infty))$. 
For all $T, \e>0, t_0 \geq 0$ and any unit-speed geodesic $\gamma(t)$, there exists an exact solution $u$ of \eqref{TDWE} with
\begin{enumerate}
	\item Initial energy close to 1
		\begin{equation}
			|E(u,t_0)-1|<\e.
		\end{equation}
	\item The energy is close to $G(\gamma,t_0,t)^2$, that is for any $t \in [t_0,t_0+T]$
	\begin{equation}
		|E(u,t) - G(\gamma,t_0,t)^2 | < \e. 
	\end{equation}
	\item Furthermore, there is a lower bound on the energy in terms of $G(\gamma, t_0,t)$ and the initial energy. That is for all $t \in [t_0,t_0+T]$
	\begin{equation}
		E(u,t) > E(u,t_0) (G(\gamma,t_0,t)^2-2\e).
	\end{equation}
\end{enumerate}
\end{proposition}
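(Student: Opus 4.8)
The plan is to take the solution $u$ to be $\omega_k$, the exact solution of \eqref{TDWE} whose Cauchy data at time $t_0$ coincide with those of the quasi-solution $v_{k,J}$ of Proposition \ref{quasiprop}, for a large $J$ fixed first and a large $k$ chosen afterwards. Since $v_{k,J}(\cdot,t_0)$ is smooth it lies in $H^1(M)\times L^2(M)$, so $\omega_k$ exists and is unique by the standard well-posedness theory (which holds in both time directions for the damped wave equation); solving \eqref{TDWE} backwards on $[0,t_0]$ provides Cauchy data at $t=0$, so $\omega_k$ is a genuine solution of \eqref{TDWE}. The crucial structural point is the order of the two limits: $J$ is frozen first, large enough that $W_J$ is $L^\infty$-close to $W$ and, by Lemma \ref{l:Gbounded} part 2, $G_J$ is close to $G$; only afterwards is $k\ra\infty$, which extinguishes the Gaussian beam errors of Lemma \ref{l:Ralston} and Proposition \ref{quasiprop}.

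The heart of the argument is an energy estimate for $r_k := \omega_k - v_{k,J}$, which has vanishing Cauchy data at $t_0$ and solves
\[
(\p_t^2 - \Delta_g + 2W\p_t)r_k = -(\p_t^2 - \Delta_g + 2W_J\p_t)v_{k,J} - 2(W-W_J)\p_t v_{k,J} =: F_{k,J}.
\]
By Proposition \ref{quasiprop} part 1 the first term on the right has $L^2$ norm at most $C(T,J,t_0)k^{-1/2}$ on $[t_0,t_0+T]$. For the second, \eqref{eq:WjtoW} gives $\|W-W_J\|_{L^\infty}<1/J$, while $\ltwo{\p_t v_{k,J}(\cdot,t)}$ is bounded uniformly in $j$ and $k$ once $k$ is large: writing $\p_t v_{k,J} = G_J\p_t u_k - W_J(\gamma(t),t)G_J u_k$, one invokes $\ltwo{\p_t u_k}^2 \le 2E(u_k,t)$ bounded via Lemma \ref{l:Ralston} part 3, $|G_J|\le1$ from Lemma \ref{l:Gbounded}, the uniform-in-$j$ bound on $W_J$ over the compact set $M\times[t_0,t_0+T]$ coming from \eqref{eq:WjtoW}, and $\ltwo{u_k}\ra0$ from Lemma \ref{l:Ralston} part 2. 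Hence $\sup_{[t_0,t_0+T]}\ltwo{F_{k,J}(\cdot,t)}\le C(T,J,t_0)k^{-1/2} + C(T,t_0)/J$. Integrating by parts, substituting the equation for $r_k$, and using $W\ge0$ together with Cauchy--Schwarz,
\[
\frac{d}{dt}E(r_k,t) = \int_M \p_t r_k\,F_{k,J}\,dx_g - 2\int_M W|\p_t r_k|^2\,dx_g \le \sqrt{2E(r_k,t)}\,\ltwo{F_{k,J}(\cdot,t)},
\]
so integrating from $t_0$, where $E(r_k,t_0)=0$, yields $\sup_{[t_0,t_0+T]}E(r_k,t)^{1/2}\le \tfrac{T}{\sqrt2}\big(C(T,J,t_0)k^{-1/2} + C(T,t_0)/J\big)$.

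Now fix $\e>0$; we may assume $\e<1$. Part 1 is immediate because $\omega_k$ and $v_{k,J}$ have the same Cauchy data at $t_0$, so $E(\omega_k,t_0)=E(v_{k,J},t_0)$, and since $G_J(\gamma,t_0,t_0)=1$, Proposition \ref{quasiprop} part 2 gives $|E(\omega_k,t_0)-1|\le C(T,J,t_0)k^{-1/2}<\e$ for $k$ large. For part 2, $E(v_{k,J},t)$ and $E(\omega_k,t)$ are uniformly bounded (Proposition \ref{quasiprop} part 2, Lemma \ref{l:Gbounded}, and the estimate on $r_k$), so $|E(\omega_k,t)-E(v_{k,J},t)|\le C\,E(r_k,t)^{1/2}$, whence
\[
|E(\omega_k,t)-G(\gamma,t_0,t)^2| \le |E(\omega_k,t)-E(v_{k,J},t)| + |E(v_{k,J},t)-G_J(\gamma,t_0,t)^2| + |G_J(\gamma,t_0,t)^2-G(\gamma,t_0,t)^2|.
\]
Choosing $J$ large controls the last term (Lemma \ref{l:Gbounded} part 2) and the $C(T,t_0)/J$ contribution to the first; then choosing $k$ large controls the middle term (Proposition \ref{quasiprop} part 2) and the $C(T,J,t_0)k^{-1/2}$ contribution to the first, making each piece $<\e/3$. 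Finally, part 3 follows from parts 1 and 2 by an elementary computation: writing $G^2$ for $G(\gamma,t_0,t)^2$, if $G^2-2\e>0$ then, using $E(u,t_0)<1+\e$, $E(u,t)>G^2-\e$, and $G^2\le1$ from Lemma \ref{l:Gbounded}, we get $E(u,t_0)(G^2-2\e)\le(1+\e)(G^2-2\e)\le G^2-\e<E(u,t)$; the cases $G^2\le2\e$ are handled similarly using $E(u,t)>G^2-\e>0$ and $E(u,t_0)>0$. The main obstacle is the energy estimate for $r_k$: the two error sources --- the $O(k^{-1/2})$ Gaussian beam defect and the $O(1/J)$ mismatch between $W$ and $W_J$ in the damping coefficient --- must be kept genuinely separate so that $J$ can be fixed before $k\ra\infty$; everything else is bookkeeping.
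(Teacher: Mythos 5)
Your proposal is correct and follows essentially the same route as the paper: take the exact solution $\omega_k$ with Cauchy data from $v_{k,J}$ at $t_0$ (with $J$ fixed before $k\ra\infty$), run an energy estimate on the difference $\omega_k-v_{k,J}$ with the two error sources (Gaussian-beam defect $O(k^{-1/2})$ and damping mismatch $O(1/J)$) kept separate, then conclude via the triangle inequality and the elementary algebra for part 3. The only differences are cosmetic — you package the difference estimate as a Gr\"onwall-type bound on $E(r_k)^{1/2}$ and use $G^2\leq 1$ in part 3 where the paper uses monotonicity of the energy — and these do not change the argument.
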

\begin{proof}
By Lemma \ref{l:Gbounded} and the convergence of the $W_j$ to $W$ in \eqref{eq:WjtoW} there exists $J$ large enough so that 
\begin{align}
	&\sup_{t \in [t_0,t_0+T]} |G_J(\gamma,t_0,t)^2-G(\gamma,t_0,t)^2| < \frac{\e}{3},\label{eq:GJclosetoG}\\
	&\nm{W-W_J}_{L^{\infty}(M\times [0,\infty))} < \frac{\e^2}{6000T}.
\end{align}
Let $v_{k,J}$ be as defined in \eqref{vkdef} with $j=J$. Then define $\omega_k$ as the unique solution of 
$$
\begin{cases} (\p_t^2 -\Delta + 2W \p_t) \omega_k =0 \\
\omega_k(x,t_0)= v_{k,J}(x,t_0)\\
\p_t \omega_k(x,t_0) = \p_t v_{k,J}(x,t_0).
\end{cases}
$$ 
The idea is to now choose $k$ large enough so that $\omega_k(x,t)$ is the desired solution. 

1) It is immediate from $G_j(\gamma, t_0,t_0)=1$ and Proposition \ref{quasiprop} that
\begin{equation}\label{eq:Eomegak}
E(\omega_k, t_0) = E(v_{k,J}, t_0) \ra 1 \text{ as } k \ra \infty. 
\end{equation}
So for $k$ large enough, setting $u(x,t)=\omega_k(x,t)$ satisfies part 1.

2) To prove $\omega_k$ satisfies part 2 for $k$ large enough, we begin with the triangle inequality
\begin{align}
	|E(\omega_k, t) - G(\gamma,t_0,t)^2| \leq &|E(\omega_k, t) - E(v_{k,J},t)| + |E(v_{k,J},t)-G_J(\gamma, t_0,t)^2| \\
	&+ |G_J(\gamma,t_0,t)^2-G(\gamma,t_0,t)^2|.
\end{align}
By \eqref{eq:GJclosetoG} the last term is less than $\e/3,$ and by Proposition \ref{quasiprop} for $k$ large enough
\begin{equation}
	\sup_{t \in [t_0, t_0+T]} |E(v_{k,J},t) - G_J(\gamma,t_0,t)^2| < \frac{\e}{3}.
\end{equation}
Thus to show part 2, it is enough to show for $k$ large enough that 
\begin{equation}
	\sup_{t \in [t_0, t_0+T]} |E(\omega_k,t)-E(v_{k,J},t)| < \frac{\e}{3}.
\end{equation}
To see this, begin again with the triangle inequality 
\begin{equation}
|E(\omega_k, t)^{1/2} - E(v_{k,J}, t)^{1/2} | \leq E(\omega_k -v_{k,J}, t)^{1/2}.
\end{equation}
Note $E(\omega_k -v_{k,J}, t_0)=0$ and we will control the time derivative of $E(\omega_k-v_{k,J},t)$.
To do so, first let $f_k = (\p_t^2 -\Delta + 2W_J \p_t )v_{k,J}$. Then  
\begin{align}
(\p_t^2 -\Delta +2W \p_t) (\omega_k - v_{k,J}) &= f_k + 2(W - W_J) \p_t v_{k,J}\\
&=f_k + 2(W - W_J) (- W_J(\gamma(t), t)G_J u_k+G_J\p_t u_k).
\end{align}
Now taking the time derivative of $E(\omega_k-v_{k,J},t)$
\begin{align}
\p_t E(\omega_k -v_{k,J} t) = \frac{1}{2} \int_M \bigg((\p_t^2 &-\Delta_g) (\omega_k-v_{k,J}) \overline{\p_t (\omega_k - v_{k,J})}\\
&+ (\p_t^2 -\Delta_g)\overline{(\omega_k-v_{k,J})} \p_t(\omega_k-v_{k,J}) \bigg) dx_g \\
=\Re \int_M \bigg( f_k &+2(W-W_J) (-W_J G_J u_k+G_J \p_t u_k)\\
&-2W\p_t(\omega_k -v_{k,J}) \bigg) \p_t \overline{(\omega_k-v_{k,J})} dx_g.
\end{align}
Note that since $W \geq 0$, the final term on the right hand side 
\begin{align}
	\int_M -2 W |\p_t (\omega_k - v_{k,J})|^2 dx_g \leq 0.
\end{align} 
Combining this with the H\"older inequality and the boundedness of $G_J$, for all $t \in [t_0, t_0+T]$
\begin{align}
	|\p_t E(\omega_k-v_{k,J},t)| \leq \sup_{t \in [t_0, t_0+T]}\bigg(&\ltwo{f_k} + 2 \lp{W-W_J}{\infty} \lp{W_J}{\infty} \ltwo{u_k} \\
	& +2 \lp{W-W_J}{\infty} \ltwo{\p_t u_k}\bigg) \ltwo{\p_t (\omega_k - v_{k,J})}. \label{eq:omegakIntermediate}
\end{align}
To control the remaining terms, first note: since $\omega_k$ is a solution of \eqref{TDWE} its energy is non-increasing, and by the convergence of $E(v_{k,J},t)$ to $G_J(\gamma,t_0,t)$ from Proposition \ref{quasiprop}, for $k$ large enough
\begin{align}
\ltwo{\p_t(\omega_k-v_{k,J})} &\leq 2 E(\omega_k, t) + 2 E(v_{k,J}, t) \\
&\leq 2 E(\omega_k, t_0) + 2 |G_J(\gamma,t_0,t)|^2 + \frac{1}{2} \\
&\leq 2+\frac{1}{2}+ 2|G_J(\gamma, t_0, t)|^2+\frac{1}{2} \leq 5,
\end{align}
where the third inequality follows by the convergence of $E(\omega_k,t_0)$ to 1 from \eqref{eq:Eomegak}, and the final inequality follows from the boundedness of $G_j$ in Lemma \ref{l:Gbounded}. Note also by Lemma \ref{l:Ralston} part 3, that 
\begin{align}
&\ltwo{\p_t u_k} \leq 2 E(u_k,t) = 2 E(u_k,0) \leq 3.
\end{align}
Combining these with \eqref{eq:omegakIntermediate}, then controlling $f_k$ with Proposition \ref{quasiprop}, $u_k$ with Lemma \ref{l:Ralston} part 2, and $W-W_J$ by our assumption on $J$
\begin{align}
	\sup_{t \in [t_0,t_0+T]} |\p_t E(\omega_k-v_{k,J},t)|&\leq  \sup_{t \in [t_0, t_0+T]} C \ltwo{f_k} + 30 \lp{W-W_J}{\infty} +C \ltwo{u_k} \\
	& \leq C k^{-1/2} + \frac{\e^2}{200T}
\end{align}
Now choosing $k$ large enough so that $Ck^{-1/2} \leq \frac{\e^2}{200T}$ we have 
\begin{equation}
	\sup_{t \in [t_0,t_0+T]} |\p_t E(\omega_k-v_{k,J},t)| \leq \frac{\e^2}{100T}.
\end{equation}
Thus integrating the time derivative and using $E(\omega_k- v_{k,J},t_0)=0$
\begin{equation}
	0\leq \sup_{t \in [t_0,t_0+T]} E(\omega_k- v_{k,J},t) \leq \frac{\e^2}{100}.
\end{equation}
Therefore for $t \in [t_0, t_0+T]$
\begin{equation}
	|E(\omega_k, t)^{1/2} - E(v_{k,J})^{1/2}| \leq E(\omega_k- v_{k,J},t)^{1/2} \leq \frac{\e}{10}.
\end{equation}
Thus for any $t \in [t_0,t_0+T]$ and for $k$ large enough, again using Proposition \ref{quasiprop} to estimate $E(v_{k,J},t)$ in terms of $G_J(\gamma,t_0,t)$ and using that $E(\omega_k,t)=E(\omega_k,t_0)$
\begin{align}
	|E(\omega_k, t)-E(v_{k,J},t)| &= |E(\omega_k, t)^{1/2}-E(v_{k,J},t)^{1/2}||E(\omega_k, t)^{1/2}+E(v_{k,J},t)^{1/2}|\\
	&\leq \frac{\e}{10}(E(\omega_k, t_0)^{1/2} + |G_J(\gamma, t_0,t)|+\frac{1}{2})\\
	& \leq \frac{\e}{10}\left(1+\frac{1}{2}+|G_J(\gamma,t_0,t|)+\frac{1}{2}\right) \leq \frac{3\e}{10}<\e/3, 
\end{align}
where the third inequality uses the convergence of $E(\omega_k,t_0)$ to 1 from \eqref{eq:Eomegak}, and the boundedness of $G_j$ in Lemma \ref{l:Gbounded}.
%
%
Thus part 2 holds. 

3) Now we show that $u=\omega_k$ satisfies part 3 for $k$ large enough. If we take $k$ large enough so that the first two parts hold, then for any $t \in [t_0,t_0+T],$ since $1>E(\omega_k,t_0)-\e$, $E(\omega_k,t) > G(\gamma,t_0,t)^2 -\e$, and the energy of solutions to \eqref{TDWE} is non-increasing
\begin{align}
E(\omega_k,t) &> E(\omega_k,t) (E(\omega_k,t_0) - \e) =E(\omega_k,t)E(\omega_k,t_0)-\e E(\omega_k, t)\\
&>E(\omega_k,t_0) ( G(\gamma,t_0,t)^2 - \e ) - \e E(\omega_k,t) \\
&> E(\omega_k,t_0) (G(\gamma,t_0,t)^2 -\e) - \e E(\omega_k,t_0) \\
&>E(\omega_k,t_0)(G(\gamma,t_0,t)^2 -2\e).
\end{align}
This is exactly the desired inequality.
\end{proof}
\subsection{Bound on Decay Rates: Proof of Theorem \ref{lowerboundthm}}
Now to prove the lower bounds on uniform stabilization rates, we apply Proposition \ref{energyapproxprop} part 3, to obtain a lower bound in terms of $G(\gamma, t_0,t)$. We then relate $L_{\infty}$ and $\Sigma(t)$ to $G(\gamma, t_0,t)$. Because Proposition \ref{energyapproxprop} only applies on a finite time interval $[t_0, t_0+T]$, we must argue by contradiction. 
\begin{proof}
1) As a first step, it is necessary to see that $t L(t)$ is super additive. This is the step that requires taking the infimum over starting times $t_0$ in the definition of $L(t)$. Let $t, r \geq 0$
\begin{align}
(t+r) L(t+r) &= \inf_{t_0 \geq 0, \gamma} \int_{t_0}^{t_0+r+t} W(\gamma(s), s) ds \\
&= \inf_{t_0 \geq 0, \gamma}  \left( \int_{t_0}^{t_0+r} W(\gamma(s), s) ds + \int_{t_0+r}^{t_0+r+t} W(\gamma(s), s) ds \right)\\
& \geq \inf_{t_0 \geq 0, \gamma} \int_{t_0}^{t_0+r} W(\gamma(s), s) ds + \inf_{t_0 \geq 0, \gamma}  \int_{t_0+r}^{t_0+r+t} W(\gamma(s),s) ds \\
&= \inf_{t_0 \geq 0, \gamma}  \int_{t_0}^{t_0+r} W(\gamma(s), s)ds + \inf_{t_0 \geq 0, \gamma}  \int_{t_0}^{t_0+t} W(\gamma(s), s) ds \\
&= tL(t) + rL(r). 
\end{align}
Therefore by Fekete's Lemma, $L_{\infty} = \lim_{t \ra \infty} L(t) = \sup_{t \in [0,\infty)} L(t)$ and thus $L(t) \leq L_{\infty}$ for all $t$. 

Now it can be shown that $\alpha \leq 2 L_{\infty}$.  Assume otherwise, so $\alpha=2 L_{\infty} + 3 \eta$ for some $\eta >0$, and a contradiction will be obtained by applying Proposition \ref{energyapproxprop} part 3. Since $2(L_{\infty} + \eta) < \alpha$, there exists $C^*>0,$ such that for all $t, t_0 \geq 0$, and all solutions $u$ of \eqref{TDWE}
\begin{equation}\label{edecaycontra}
E(u,t+t_0) \leq C^* E(u,t_0) e^{-2t(L_{\infty}+\eta)}.
\end{equation}
This right hand side will be manipulated to be bounded from above by $G(\gamma, t_0, T+t_0)^2-\d$ at some time $t=T$ and for some $\d>0$. First, choose $T>0$ large enough, so that $\max(C^*,1) < e^{T \eta},$ then
\begin{equation}
C^* e^{-2T(L_{\infty}+\eta)} < e^{-2TL_{\infty}-T\eta}.
\end{equation}
Now by subadditivity $L(T) \leq L_{\infty}$ for all $T$, so
\begin{equation}\label{Linfeq}
C^* e^{-2T(L_{\infty}+\eta)} < e^{-2TL_{\infty} - T\eta} \leq e^{-2TL(T)- T \eta}.
\end{equation}
Now note that 
\begin{equation}
L(T)=-\frac{1}{T} \sup_{t_0 \geq 0, \gamma} \ln(G(\gamma, t_0, T+t_0)).
\end{equation}
Thus there exists a geodesic $\gamma$ and a starting time $t_0 \in \Rb,$ such that 
\begin{equation}
	\ln G(\gamma,t_0,T+t_0) > -TL(T) - \frac{T \eta}{2}.
\end{equation}
Combining this with \eqref{Linfeq}
\begin{equation}\label{GLTeq}
C^* e^{-2T(L_{\infty}+\eta)}<e^{-2TL(T)-T \eta} < G(\gamma,t_0,T+t_0)^2.
\end{equation}
Since these are inequalities between constant real numbers, there exists $\d>0$, such that 
\begin{equation}
C^* e^{-2T(L_{\infty}+\eta)} < G(\gamma, t_0, T+t_0)^2 - \d.
\end{equation}
Now by Proposition \ref{energyapproxprop} part 3, there exists an exact solution $u$ of \eqref{TDWE} such that 
\begin{equation}
E(u,T+t_0) > E(u,t_0)(G(\gamma, t_0, T+t_0)^2-\d) > C^* E(u,t_0)e^{-2T(L_{\infty}+\eta)},
\end{equation}
which contradicts \eqref{edecaycontra}. Therefore, $\alpha \leq 2L_{\infty}$.

2) The proof of 2) is analogous to that of 1). For completeness, the details are included. To begin, we claim that for any $\d, T>0$ there exists a solution $u$ to \eqref{TDWE} such that 
\begin{equation}\label{eq:ElowSigma}
E(u,T) > E(u,0) \exp(-2\Sigma(T)-\d).
\end{equation}
To see this, fix $\d, T>0$ and note  
\begin{equation}
\Sigma(T) = \inf_{\gamma} \int_0^T W(\gamma(s),s)ds= - \sup_{\gamma} \ln(G(\gamma,0,T)).
\end{equation}
Thus there exists a geodesic $\gamma$, such that 
\begin{equation}
	-\Sigma(T) - \frac{\d}{2} < \ln(G(\gamma,0,T)).
\end{equation}
Therefore 
\begin{equation}
	\exp(-2\Sigma(T)- \d) < G(\gamma,0,T)^2.
\end{equation}
Now because the above inequality is between constant real numbers, there exists an $\e>0$ small enough, so that
\begin{equation}
	 \exp(-2\Sigma(T)-\d) < G(\gamma,0,T)^2- 2\e.
\end{equation}
Then by Proposition \ref{energyapproxprop} part 3, there exists $u$ a solution of \eqref{TDWE} such that 
\begin{align}
E(u,T) &> E(u,0) (G(\gamma,0,T)^2 - 2\e) \\
&> E(u,0) \exp(-2\Sigma(T) - \d).
\end{align}
This completes the proof of the claim \eqref{eq:ElowSigma}. 

Now it can be shown that $\sigma \leq 2$. Assume otherwise, so there exists $\eta>0$ such that $\sigma = 2 + 3 \eta$. Then since $2 + 2\eta < \sigma$, there exists $C^*>0,$ such that for all $t \geq 0$ and all solutions $u$ of \eqref{TDWE}
\begin{equation}
E(u,t) \leq C^* E(u,0) \exp(-(2+2\eta) \Sigma(t)).
\end{equation}
Since $\limt \Sigma(t) =\infty$, there exists $T>0$ large enough, so that $\exp(\eta \Sigma(T))>\max(C^*, e^{1/2})$. Then 
\begin{align}
E(u,T) &\leq C^* \exp(-(2+2\eta) \Sigma(T)) E(u,0) \\
&< \exp(-(2+\eta) \Sigma(T)) E(u,0)\\
&<\exp\left(-2\Sigma(T)-\frac{1}{2}\right) E(u,0).
\end{align}
But by \eqref{eq:ElowSigma} there exists $u$ solving \eqref{TDWE}, such that 
\begin{align}
E(u,T) &> E(u,0) \exp\left(-2\Sigma(T) - \frac{1}{2}\right) \\\
&>E(u,0) \exp(-(2+\eta) \Sigma(T)) \\
&> C^* E(u,0)  \exp(-(2+2\eta) \Sigma(T)) \\
&>E(u,T).
\end{align}
This is a contradiction and so we must have $\sigma \leq 2$. 

3) When $\Sigma(t) < K<\infty$ to see uniform stabilization cannot occur, assume otherwise. So suppose there exists $r(t) \ra 0$ such that $E(u,t) \leq r(t) E(u,0)$ for all $u$ solving \eqref{TDWE}. Then pick $T$ large enough so that $r(T) < \exp(-3K)$. Then
for all $u$ solving \eqref{TDWE}
$$
E(u,T) \leq r(T) E(u,0) < \exp(-3K) E(u,0).
$$
But by \eqref{eq:ElowSigma} for any $\d \in (0,K)$ there exists $u$ solving \eqref{TDWE} with 
\begin{align}
E(u,T)  &> E(u,0) \exp(-2\Sigma(T) -\d) \\
&> E(u,0) \exp(-2K  - \d) \\
&> E(u,0) \exp(-3K) > E(u,T),
\end{align}
which is a contradiction. 
\end{proof}
\begin{remark} When Assumption \ref{TGCC} is not satisfied it is more convenient to work with $\Sigma(t)$, as it distinguishes how $L(t)$ approaches $0$ as $t \ra \infty$. Note that an infimum is not taken over starting times $t_0$ in the definition of $\Sigma(t)$. This was necessary to show $tL(t)$ was sub-additive, but when Assumption \ref{TGCC} is not satisfied 
	\begin{equation}
		\inf_{t_0, \gamma} \int_{t_0}^{t_0+t} W(\gamma(t), t) dt = 0.
	\end{equation}
\end{remark}

\section{Proof of Wave Observability Inequalities}\label{observesection}
The proof of Proposition \ref{uniobserveprop} proceeds in three steps. First, following the standard approach for observability inequalities \cite{BardosLebeauRauch1992, BurqGerard1997, LRLTT}, we prove a weak version on a finite time interval $(0,T)$ using propagation of defect measure. Then, we eliminate an error term by showing there are no solutions ``invisible" to the observation set. Our proof showing there are no ``invisible" solutions is what allows us to consider more general initial data than \cite{LRLTT}. Finally we use a compactness argument to obtain control of the observability constant on time intervals $(t_0, t_0+T)$ independent of the starting time $t_0$. Obtaining this uniformity in $t_0$ is new and is what enables us to obtain decay rates for solutions of the damped wave equation \eqref{TDWEb}.

We begin with a finite time version of Assumption \ref{TGCCp}.
\begin{assumption}\label{ftGCC}(finite-time TGCC)
Fix $T>0$ and consider $Q,$ an open set in $\Omegab \times (0,T)$. Assume for all unit-speed generalized geodesics $\gamma,$ there exists $t \in (0,T),$ such that $(\gamma(t),t) \in Q$.
\end{assumption}

\begin{remark}
	\begin{enumerate}
		\item If $W \in C^0_u(\Omegab \times [0,\infty))$ satisfies the TGCC for some $T_0$, that is Assumption \ref{TGCCp}, then there exists $\d>0$, such that $\{W>\d\}$ satisfies Assumption \ref{ftGCC} on $\Omega \times [0,T_0]$.
		\item This is the time-dependent generalization of the GCC used in \cite[Definition 1.6]{LRLTT}, which implies exponential uniform stabilization for time-periodic damping \cite[Corollary 1.14]{LRLTT}, and a time-dependent observability inequality \cite[Theorem 1.8]{LRLTT}.
	\end{enumerate}

\end{remark}
We now state and prove an observability inequality with an error. The proof is analogous to that of \cite[Lemma 2.1]{LRLTT}. However, our initial data assumption is weaker, see Remark \ref{r:data}, so we include a proof for completeness. 
\begin{lemma}[Weak Observability Inequality]\label{weakobserve}
Suppose $Q$ satisfies Assumption \ref{ftGCC} and let $\chi_Q$ be the indicator function on $Q$. If $\p\Omega \neq \emptyset,$ assume moreover that no generalized bicharacteristic has contact of infinite order with $\p\Omega \times (0,T)$, that is $\Gc^{\infty}=\emptyset$.  Then there exists $C>0$, such that for all $(\psi_0, \psi_1) \in H \times L^2(\Omega)$ and $\psi$ solving \eqref{WEb} with initial data $(\psi_0, \psi_1)$, then
\begin{equation}
	\nm{\psi_0}_H^2 + \ltwo{\psi_1}^2 \leq C \left( \int_0^T \int_{\Omega} \chi_Q |\p_t \psi|^2 dx dt+ \ltwo{\psi_0}^2 + \nm{\psi_1}_{H'}^2 \right).
\end{equation}
\end{lemma}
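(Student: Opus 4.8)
The plan is to argue by contradiction using the standard compactness-uniqueness scheme combined with propagation of singularities (defect measures), following the approach of \cite{BardosLebeauRauch1992, LRLTT}. Suppose the inequality fails for every $C>0$. Then there is a sequence of initial data $(\psi_0^n, \psi_1^n) \in H \times L^2(\Omega)$ with corresponding solutions $\psi^n$ of \eqref{WEb} normalized so that
\begin{equation}
\nm{\psi_0^n}_H^2 + \ltwo{\psi_1^n}^2 = 1,
\end{equation}
while
\begin{equation}
\int_0^T \int_{\Omega} \chi_Q |\p_t \psi^n|^2\, dx\, dt + \ltwo{\psi_0^n}^2 + \nm{\psi_1^n}_{H'}^2 \longrightarrow 0
\end{equation}
as $n \to \infty$. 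By the normalization and weak compactness, after passing to a subsequence we may assume $(\psi_0^n, \psi_1^n) \rightharpoonup (\psi_0, \psi_1)$ in $H \times L^2(\Omega)$, and by Rellich compactness $\psi_0^n \to \psi_0$ strongly in $L^2(\Omega)$ and $\psi_1^n \to \psi_1$ strongly in $H'$. Since the right-hand side goes to zero, the strong limits force $\psi_0 = 0$ and $\psi_1 = 0$, so in fact $(\psi_0^n, \psi_1^n) \rightharpoonup 0$ weakly.

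The next step is to extract a microlocal defect measure $\mu$ on $T^*(\Omega \times (0,T)) \setminus 0$ (or its boundary-adapted analogue, using the framework of Appendix A) associated to the sequence $\psi^n$ — more precisely to $(\psi^n, \p_t \psi^n)$ in the energy space. Standard propagation results (Melrose–Sjöstrand for the boundary case, which is where the hypothesis $\Gc^\infty = \emptyset$ enters) show that $\mu$ is supported in the characteristic set and is invariant under the generalized bicharacteristic flow. The condition $\int_0^T \int_\Omega \chi_Q |\p_t \psi^n|^2 \to 0$ tells us that $\mu$ vanishes on the region lying over $Q$ (here one uses that $|\p_t \psi^n|^2$ is, microlocally, comparable to the full energy density on the characteristic set since $|\tau| = |\xi|_g$ there, so $Q$-smallness of the time derivative kills the measure over $Q$). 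By Assumption \ref{ftGCC}, every generalized bicharacteristic enters the region over $Q$ at some time, and by flow-invariance $\mu$ must then vanish identically. This means $(\psi^n, \p_t \psi^n) \to 0$ strongly in $L^2_{loc}$ of the interior of the energy space on any compact subinterval, and combined with finite propagation speed / energy conservation for \eqref{WEb} one upgrades this to $E(\psi^n, t) \to 0$ uniformly, hence $\nm{\psi_0^n}_H^2 + \ltwo{\psi_1^n}^2 \to 0$, contradicting the normalization to $1$.

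The main obstacle — and the reason this is stated as a separate lemma rather than quoted directly — is the boundary analysis: making the defect measure argument rigorous up to $\p\Omega \times (0,T)$ requires the Melrose–Sjöstrand propagation theorem and careful handling of glancing and gliding rays, which is exactly why the hypothesis $\Gc^\infty = \emptyset$ is imposed (it rules out the pathological rays with infinite-order tangency for which propagation can fail). A secondary technical point is justifying that the measure "sees" $|\p_t \psi^n|^2$ rather than the full gradient; this is routine on the characteristic set but must be checked, and it is also the place where one wants $W \in C^0$ (here just that $\chi_Q$ is an indicator of an open set) so that the observation term is lower-semicontinuous with respect to the measure. When $\p\Omega = \emptyset$ the argument is cleaner — ordinary Hörmander propagation of singularities for the wave operator $\p_t^2 + A$ suffices — and this is essentially the classical Bardos–Lebeau–Rauch argument transported to the time-dependent control region $Q$. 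I would present the $\p\Omega = \emptyset$ case in detail and indicate the modifications needed near the boundary, citing Appendix A for the construction of generalized bicharacteristics and the relevant propagation statement.
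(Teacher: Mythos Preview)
Your proposal is correct and follows essentially the same route as the paper: argue by contradiction, use the compact embedding $H\times L^2\hookrightarrow L^2\times H'$ to force the weak limit to be zero, extract a microlocal defect measure (in the boundary-adapted sense of Appendix~A when $\p\Omega\neq\emptyset$), use $\Gc^\infty=\emptyset$ to obtain invariance under the compressed generalized bicharacteristic flow, and combine Assumption~\ref{ftGCC} with the vanishing of $\mu$ over $Q$ to conclude $\mu\equiv 0$, hence strong $H^1(\Omega\times(0,T))$ convergence and a contradiction via energy conservation. The paper's write-up is slightly more streamlined (it passes directly from $\psi_n\to 0$ strongly in $H^1(\Omega\times(0,T))$ to $E(\psi_n,0)\to 0$ using conservation of energy, without invoking finite propagation speed), but the argument is the same.
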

\begin{proof}
Assume otherwise, so there exists a sequence of initial data $(\psi_{0,n}, \psi_{1,n}) \in H \times L^2(\Omega)$, with corresponding solution $\psi_n$, such that 
\begin{align}
&\nm{\psi_{0,n}}_{H}^2 + \ltwo{\psi_{1,n}}^2 = \ltwo{\nabla \psi_{0,n}} + \ltwo{\psi_{0,n}} + \ltwo{\psi_{1,n}}=1 \label{defectcontra}\\
&\limn \ltwo{\psi_{0,n}}^2 + \nm{\psi_{1,n}}_{H'}^2 = 0 \label{defectlone}\\
&\limn \int_0^T \int_{\Omega} \chi_Q |\p_t \psi_n|^2 dx dt=0. \label{defectdamping}
\end{align}
Note $(\psi_{0,n}, \psi_{1,n})$ is bounded in $H \times L^2(\Omega)$, so it contains a weakly convergent subsequence. By \eqref{defectlone} and the compact embedding of $H \times L^2(\Omega)$ into $L^2(\Omega) \times H'$, the weak limit can only be $(0,0)$. By Lemma \ref{weakconverge}, $\psi_n$ weakly converges to $0$ in $H^1(\Omega \times (0,T))$.

Now by Appendix \ref{defect}, up to replacement of $\psi_n$ by a subsequence, there exists a microlocal defect measure $\mu$ on $S^*\hat{\Sigma},$ such that for every $R \in \Psi_b^0(\Omega \times (0,T))$
\begin{equation}\label{eq:defectmeasure}
	\limn \<R\psi_n, \psi_n\>_{H^1(\Omega \times (0,T))} = \int_{S^* \hat{\Sigma}} \kappa(R) d \mu,
\end{equation}
where $\kappa(R)$ is the compressed principal symbol of $R$. See Appendix \ref{defect} for details on $\Psi_b^0$ and $\kappa$. By \eqref{defectdamping}, $\mu$ vanishes on $j(T^* Q) \cap S^* \hat{\Sigma}$. Note since $\Gc^{\infty} = \emptyset$, $\mu$ is invariant under the compressed generalized bicharacteristic flow by \cite[Lemma 2.1]{LRLTT} \cite[Section 3]{BurqLebeau2001},\cite[Section 2.2]{Lebeau1996}. The definition of the flow is given in Appendix \ref{nullbichar}. Now by Assumption \ref{ftGCC}, every unit-speed generalized bicharacteristic intersects $j(T^*Q)$. Since $\mu$ is invariant along generalized bicharacteristics, and $\mu=0$ on $j(T^* Q) \cap S^* \hat{\Sigma}$, then $\mu$ vanishes identically on $S^*\hat{\Sigma}$. Therefore, choosing $R=1$ in \eqref{eq:defectmeasure}, $u_n$ converges strongly to $0$ in $H^1(\Omega \times (0,T))$. Then note 
\begin{equation}
	0 = \limn \int_0^T \ltwo{\nabla \psi_n(\cdot, t)}^2 + \ltwo{\psi_n(\cdot, t)}^2 + \ltwo{\p_t \psi_n(\cdot, t)}^2  dt \geq \limn 2 \int_0^T E(\psi_n, t) dt.
\end{equation}
Since $\psi_n$ is a solution of the wave equation, $E(\psi_n,t)=E(\psi_n,0)$ for all $t,$ and so 
\begin{equation}
	\limn 2 E(\psi_n, 0) =\limn \left( \ltwo{\nabla \psi_{0,n}}^2 + \ltwo{\psi_{1,n}}^2 \right) = 0.
\end{equation}
This along with \eqref{defectlone} contradicts \eqref{defectcontra}. Thus the desired conclusion must hold.
\end{proof}

Following the typical approach to proving wave equation observability inequalities, we now seek to remove the $\ltwo{\psi_0}, \nm{\psi_1}_{H'}$ error terms from the right hand side of Lemma \ref{weakobserve}. In order to do so, we will first show that the only solutions of the wave equation \eqref{WEb} which are ``invisible" to the observation set $Q$ are the constant solutions. 

To make this precise we define the set of invisible solutions 
\begin{equation}
	N_T=\{v \in H^1(\Omega \times (0,T)); \Box v =0, (v,\p_t v)|_{t=0}= (v_0, v_1) \in H \times L^2(\Omega) \text{ and } \chi_Q \p_t v=0\}.
\end{equation}
We equip it with the norm
\begin{equation}
	\nm{v}_{N_T}^2 = \nm{v_0}_{H}^2 + \ltwo{v_1}^2,
\end{equation}
and note that if $u=v$ in $N_T$, then $u$ and $v$ solve the wave equation with the same initial data, and so $u=v$ almost everywhere on $\Omega \times [0,T]$. 

We now show that the invisible solutions consist only of constants. Our proof follows the same outline as \cite[Lemma 2.3]{LRLTT} but must change to handle our more general initial conditions. We will show that $N_T$ is finite dimensional and $\p_t$ maps $N_T/\{c\}$ to itself. So if $N_T\neq \{c\}$, then $\p_t$ has a non-trivial eigenvalue. Then proceeding by contradiction, we study the non-trivial eigenfunction of $\p_t$, and using elliptic unique continuation show it must be $0$, giving the desired contradiction.
\begin{lemma}\label{invisiblelemma}
$N_T=\{c\}$, the constant functions. 
\end{lemma}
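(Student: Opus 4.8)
The plan is to follow the outline announced just before the statement. The three structural facts to establish are: (i) $N_T$ is finite dimensional; (ii) $\partial_t$ induces a linear endomorphism of $N_T/\{c\}$; and (iii) this endomorphism forces any nonconstant element of $N_T$ to be of the form $e^{i\omega t}w(x)$ with $w$ a Laplace eigenfunction that vanishes on a nonempty open set — which elliptic unique continuation then rules out.

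First I would prove (i). If $v\in N_T$ then $\chi_Q\partial_t v=0$, so the Weak Observability Inequality of Lemma \ref{weakobserve} collapses to
\begin{equation}
\nm{v_0}_H^2+\ltwo{v_1}^2\leq C\left(\ltwo{v_0}^2+\nm{v_1}_{H'}^2\right),\qquad v\in N_T.
\end{equation}
Thus on $N_T$ the defining norm is dominated by the weaker norm coming from $L^2(\Omega)\times H'$. Since $H\times L^2(\Omega)\hookrightarrow L^2(\Omega)\times H'$ is compact, the closed unit ball of $N_T$ is precompact in $L^2(\Omega)\times H'$, hence — applying the displayed inequality to differences — precompact in $N_T$; a normed space with precompact unit ball is finite dimensional, so $\dim N_T<\infty$.

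Next I would prove (ii), and this is the step I expect to be the real obstacle, since the naive claim ``$\partial_t v$ has $H\times L^2$ Cauchy data'' is false in general and circular to assume. The resolution is a regularity gain from invisibility: on the open set $Q$ one has $\partial_t v=0$, so $v$ is $t$-independent there and, by the equation, harmonic in $x$ there, whence $v\in C^\infty(Q)$ and $\mathrm{WF}(v)$ misses $T^*Q$. Because $Q$ satisfies Assumption \ref{ftGCC}, every null bicharacteristic of $\Box$ (generalized bicharacteristic when $\partial\Omega\neq\emptyset$, using $\Gc^\infty=\emptyset$) meets $T^*Q$, so propagation of singularities gives $\mathrm{WF}(v)=\emptyset$ and $v$ is smooth. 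Then $\partial_t v$ again solves the wave equation, has smooth (hence $H\times L^2$) Cauchy data, and satisfies $\chi_Q\partial_t^2 v=0$ by differentiating $\partial_t v=0$ on the open set $Q$; thus $\partial_t v\in N_T$, and since $\partial_t$ kills constants it descends to an endomorphism of $N_T/\{c\}$.

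Finally I would prove (iii). Suppose for contradiction $N_T\neq\{c\}$. Working over $\C$ (the real case by complexification), $\partial_t$ has an eigenvalue $\lambda$ on $N_T/\{c\}$, i.e.\ $\partial_t v=\lambda v+\text{(const)}$ for some $v\in N_T\setminus\{c\}$. If $\lambda\neq0$, after replacing $v$ by $v+\lambda^{-1}\text{(const)}$ (still in $N_T$) we may assume $\partial_t v=\lambda v$ with $v$ nonconstant and nonzero; solving this ODE in $H$ gives $v(x,t)=e^{\lambda t}w(x)$ with $w=v_0\in H$, and the wave equation forces $Aw=-\lambda^2 w$, so $-\lambda^2\geq0$ by self-adjointness and positivity of $A$, i.e.\ $\lambda=i\omega$ with $\omega\in\Rb\setminus\{0\}$ and $(\Delta+\omega^2)w=0$, $w\in C^\infty$. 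Invisibility forces $\omega e^{i\omega t}w(x)=0$ on $Q$, so $w$ vanishes on the nonempty open projection of $Q\cap(\Omega\times(0,T))$ onto $\Omega$; elliptic unique continuation on the connected set $\Omega$ then gives $w\equiv0$, a contradiction. If $\lambda=0$ then $\partial_t v$ is a constant $c$: when $c=0$, $v=w(x)$ is harmonic, hence constant (Neumann or $\partial\Omega=\emptyset$) or zero (Dirichlet), contradicting $v\notin\{c\}$; when $c\neq0$, $v=ct+w(x)$ with $w$ harmonic, and invisibility gives $\chi_Q c=0$, forcing $c=0$, again a contradiction. Hence $N_T=\{c\}$ (which in the Dirichlet case is just $\{0\}$, the only constant in $H_0^1(\Omega)$), and since constants plainly lie in $N_T$ in the remaining cases, the proof is complete.
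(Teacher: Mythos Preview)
Your proof is correct and follows essentially the same route as the paper: weak observability plus compact embedding gives $\dim N_T<\infty$, propagation of singularities gives the regularity needed to make $\partial_t$ an endomorphism of $N_T/\{c\}$, and the eigenfunction analysis combined with elliptic unique continuation finishes. The only cosmetic differences are that you apply propagation of singularities to $v$ rather than to $\partial_t v$ (both work), and you add the observation that $\lambda$ is purely imaginary, which is correct but unnecessary since $\lambda^2-\Delta$ is elliptic for any $\lambda\in\C$ and unique continuation applies regardless.
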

\begin{proof}
To begin we follow the approach of \cite[Lemma 2.3]{LRLTT}. Note for all $v \in N_T$, by the weak observability inequality Lemma \ref{weakobserve}
\begin{equation}
	\nm{v}_{N_T} =\nm{v_0}_{H}^2 + \ltwo{v_1}^2 \leq C \left( \ltwo{v_0}^2 + \nm{v_1}_{H'}^2\right).
\end{equation}
By Rellich-Kondrachov, $H(\Omega) \times L^2(\Omega)$ is compactly embedded in $L^2(\Omega) \times H'(\Omega)$. This along with the above inequality implies that the unit ball in $N_T$ is compact, so $N_T$ is finite dimensional. 

Now, if $v \in N_T$, we claim that $\p_t v \in N_T$ as well. To see this first note that since $\chi_Q \p_t v =0$ and every generalized geodesic passes through $Q$, by propagation of singularities, \cite{MelroseSjostrand1978,ms1982}, $\p_t v$ is smooth in $\Omega \times (0,T)$. 
Therefore, $\p_t v \in H^1(\Omega \times (0,T) )$, and  $\p_t v|_{t=0} \in H(\Omega), \p_t^2 v|_{t=0} \in L^2(\Omega)$. 
Since $\Box v=0$, it is immediate that $\Box \p_t v=0$. 
Finally, since $\chi_Q \p_t v=0$, $\p_t v$ is constant on the open set $Q$ and so $\chi_Q \p_t^2 v=0$ as well. 
So indeed, if $v \in N_T$, then $\p_t v \in N_T$.

At this point to address our different initial conditions the details of our approach differ from that of \cite{LRLTT}. We now quotient out by the constant functions 
\begin{equation}
	N_T/\{c\} = \{ [v]; v \in N_T, v_1 \sim v_2 \text{ if } v_1+c=v_2 \text{ for some constant } c\}.
\end{equation}
Since $N_T$ is finite dimensional, and $\{c\}$ is a subspace, then $N_T/\{c\}$ is also finite dimensional. Note also because $\p_t$ maps $N_T$ to $N_T$, then $\p_t$ maps $N_T/\{c\}$ to $N_T/\{c\}$. 

Now, assume $N_T \neq \{c\}$ and a contradiction will be produced. Since $N_T/\{c\}$ is finite dimensional and nontrivial, $\p_t: N_T/\{c\} \ra N_T /\{c\}$ has at least one eigenvalue $\lambda$ associated to a nontrivial eigenfunction $v$. 

We first claim that $\lambda \neq 0$. To see this consider $v \in N_T /\{c\}$ with $\p_t v=0$, and we will show $v=0$. Since $\p_t v=0$ in $N_T/\{c\}$, then $\p_t v=c$ in $N_T$. But since $v \in N_T$, then 
\begin{equation}
	0=\chi_Q \p_t v = \chi_Q c.
\end{equation}
Since $Q$ is open and non-empty this means $0=c$, and so $0 =\p_t v$ in $N_T$. Then $v(x,t)=v(x)$, and $\Box v=0$ implies $-\Delta v=0$. The only harmonic functions on $\Omega$ compact are constants, so $v=c$ in $N_T$, and $v=0$ in $N_T/\{c\}$. Therefore there are no nontrivial eigenfunctions with $\lambda=0$, and so we must have $\lambda \neq 0$. 

Now consider this non-trivial eigenfunction of $\p_t$, $v$, such that $\p_t v= \lambda v$ in $N_T/\{c\}$ with $\lambda \neq 0$. Then $\p_t v = \lambda v + c$ in $N_T$ for some constant $c$. Thus for some $q(x)$ 
$$
v(x,t) = e^{\lambda t}q(x) - \frac{c}{\lambda}.
$$ 
Since $(\p_t^2 -\Delta)v(x,t) =0,$ then $(\lambda^2-\Delta) q(x)=0$.

Now take any $t \in (0,T),$ such that $\omega(t) = \{x; (x,t) \in Q\}$ contains a nonempty open set. Since $\chi_Q \p_t v =0,$ then $\chi_Q q(x)=0,$ so $q(x)=0$ on the open set $\omega(t)$. Then by elliptic unique continuation, $q \equiv 0$ on $\Omega,$ and so $v\equiv 0$ in $N_T/\{c\}$, which is a contradiction. Thus $N_T =\{c\}$ as desired. 
\end{proof}
Now using this characterization of invisible solutions, we can remove the error terms $\ltwo{\psi_0}$ and $\nm{\psi_1}_{H'}$ from the right hand side of Lemma \ref{weakobserve} to obtain an observability inequality for the wave equation.
\begin{lemma}\label{observeprop}
Suppose $Q$ satisfies Assumption \ref{ftGCC} and let $\chi_Q$ be the indicator function on $Q$. If $\p\Omega \neq \emptyset,$ assume moreover that no generalized bicharacteristic has contact of infinite order with $\p\Omega \times (0,T)$, that is $\Gc^{\infty}=\emptyset$. Then there exists $C_2>0,$ such that for all  $\psi$ solving \eqref{WEb}
\begin{equation}\label{eq:observe}
\frac{1}{2} \left( \ltwo{\nabla \psi_0}^2 + \ltwo{\psi_1}^2\right) = E(\psi,0) \leq C_2 \int_0^T \int_{\Omega} \chi_Q |\p_t \psi|^2  dx dt. 
\end{equation}
\end{lemma}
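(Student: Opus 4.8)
The plan is to remove the weak error terms $\ltwo{\psi_0}^2$ and $\nm{\psi_1}_{H'}^2$ from Lemma \ref{weakobserve} by a standard compactness--uniqueness argument, now that Lemma \ref{invisiblelemma} tells us the only invisible solutions are constants (which have zero energy). I would argue by contradiction: suppose \eqref{eq:observe} fails, so there is a sequence of solutions $\psi_n$ of \eqref{WEb}, normalized by $E(\psi_n,0) = \frac{1}{2}(\ltwo{\nabla \psi_{0,n}}^2 + \ltwo{\psi_{1,n}}^2) = 1$, with
$$
\int_0^T \int_{\Omega} \chi_Q |\p_t \psi_n|^2 \, dx \, dt \ra 0.
$$

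The next step is to pass to the limit. The normalization only controls $\nabla \psi_{0,n}$ and $\psi_{1,n}$ in $L^2$, not $\psi_{0,n}$ itself, so I would first split off the mean: in the Neumann/boundaryless case write $\psi_{0,n} = c_n + \tilde\psi_{0,n}$ with $\tilde\psi_{0,n}$ of zero mean, where $c_n$ is the average of $\psi_{0,n}$; replacing $\psi_n$ by $\psi_n - c_n$ changes neither the energy nor $\p_t\psi_n$, so WLOG $\psi_{0,n}$ has mean zero (in the Dirichlet case this step is unnecessary since $\ltwo{\psi_{0,n}} \lesssim \ltwo{\nabla \psi_{0,n}}$ by Poincar\'e). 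Now $(\psi_{0,n},\psi_{1,n})$ is bounded in $H \times L^2(\Omega)$, so after extracting a subsequence it converges weakly to some $(\psi_0,\psi_1) \in H \times L^2(\Omega)$, and by the compact embedding $H \times L^2(\Omega) \hookrightarrow L^2(\Omega) \times H'(\Omega)$ the convergence is strong in $L^2(\Omega) \times H'(\Omega)$. Let $\psi$ be the solution of \eqref{WEb} with data $(\psi_0,\psi_1)$; by continuous dependence (e.g. Lemma \ref{weakconverge}) $\psi_n \rightharpoonup \psi$ weakly in $H^1(\Omega \times (0,T))$, and lower semicontinuity of the norm under weak convergence gives $\int_0^T\int_\Omega \chi_Q |\p_t\psi|^2 = 0$, so $\psi \in N_T$. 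By Lemma \ref{invisiblelemma}, $\psi$ is constant, hence $\psi_0 = c$ and $\psi_1 = 0$; but $\psi_0$ has mean zero, so $\psi_0 = 0$ too, i.e. the weak limit of $(\psi_{0,n},\psi_{1,n})$ is $(0,0)$.

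The final step is to upgrade to strong convergence and reach the contradiction. Apply the weak observability inequality Lemma \ref{weakobserve} to the differences $\psi_n - \psi_m$:
$$
\nm{\psi_{0,n}-\psi_{0,m}}_H^2 + \ltwo{\psi_{1,n}-\psi_{1,m}}^2 \leq C\Big(\int_0^T\int_\Omega \chi_Q|\p_t(\psi_n-\psi_m)|^2 + \ltwo{\psi_{0,n}-\psi_{0,m}}^2 + \nm{\psi_{1,n}-\psi_{1,m}}_{H'}^2\Big).
$$
The observation term tends to $0$ since each $\psi_n$-term does, and the two lower-order terms tend to $0$ because $(\psi_{0,n},\psi_{1,n})$ converges strongly in $L^2 \times H'$ (indeed to $(0,0)$), hence is Cauchy there. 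So $(\psi_{0,n},\psi_{1,n})$ is Cauchy in $H \times L^2(\Omega)$, converging strongly to its weak limit $(0,0)$; then $E(\psi_n,0) \ra 0$, contradicting $E(\psi_n,0)=1$. This yields \eqref{eq:observe}. The main obstacle is the bookkeeping around the non-Poincar\'e situation in the Neumann/closed case: one must make sure the mean-zero reduction is consistent with the hypotheses and that all the error terms in Lemma \ref{weakobserve} really do vanish along the sequence of differences — this is precisely where the improvement over \cite{LRLTT} on the initial data is used, since we do not get to assume the means of $\psi_0,\psi_1$ vanish a priori.
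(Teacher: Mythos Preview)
Your proposal is correct and follows essentially the same compactness--uniqueness strategy as the paper: reduce to mean-zero initial position, take a contradicting sequence, identify the weak limit as an invisible solution, invoke Lemma \ref{invisiblelemma} to conclude the limit is zero, and then use Lemma \ref{weakobserve} to obtain the contradiction. The only cosmetic difference is that you apply Lemma \ref{weakobserve} to the differences $\psi_n-\psi_m$ to show the sequence is Cauchy in $H\times L^2$, whereas the paper applies Lemma \ref{weakobserve} directly to $\psi_n$ (since $\ltwo{\psi_{0,n}}+\nm{\psi_{1,n}}_{H'}\to 0$ already by the strong $L^2\times H'$ convergence to $(0,0)$, the right-hand side goes to $0$ while the left-hand side is bounded below by the normalization); this is a shorter route to the same contradiction.
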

Note that this result is still weaker than Proposition \ref{uniobserveprop} because we are only working on the time interval $(0,T)$. At this point, we have not yet obtained a constant that is uniform for all time intervals $(t_0, t_0+T)$.

Note, when the boundary condition is Dirichlet, this result follows immediately from \cite[Theorem 1.8]{LRLTT}. So in the proof we assume that the boundary condition is Neumann or the boundary is empty. In particular we can write $H=H^1(\Omega)$ and $H'=H^{-1}(\Omega)$.

The proof will proceed by contradiction. Given a contradictory sequence, we can extract a weakly convergent subsequence. The weak limit will be an invisible solution and combining this with Rellich-Kondrachov and Lemma \ref{weakobserve} will provide the desired contradiction. 

\begin{proof}[Proof of Lemma \ref{observeprop}]
To begin, the inequality will be shown for solutions with initial position data having average value 0 and then the result will be extended to general initial data. So to begin assume $\int_{\Omega} \psi_0 dx=0$, and \eqref{eq:observe} will be shown. Notice that $\int_{\Omega} \psi_1 dx$ need not be $0$, and it is this case that cannot be handled directly from the existing result.

We will proceed by contradiction, so assume there exists a sequence $(\psi_{0,n}, \psi_{1,n}) \in H^1(\Omega) \times L^2(\Omega)$ with $\int_\Omega \psi_{0,n} dx =0$, such that 
\begin{equation}\label{defecterrorcontra}
\ltwo{\nabla \psi_{0,n}}^2+\ltwo{\psi_{1,n}}^2 =1, \qquad \limn \int_0^T \int_{\Omega} \chi_Q |\p_t \psi_n|^2 dx dt = 0,
\end{equation}
where $\psi_n$ is the solution of \eqref{WEb} with initial data $(\psi_n, \p_t \psi_n)|_{t=0}=(\psi_{0,n}, \psi_{1,n})$. 

The sequence $(\psi_{0,n},\psi_{1,n})$ is bounded in $H^1(\Omega) \times L^2(\Omega)$, so there exists a weakly convergent subsequence with limit $(\psi_0,\psi_1) \in H^1(\Omega) \times L^2(\Omega)$. Let $\psi$ solve \eqref{WEb}, with initial data $(\psi_0,\psi_1)$. Then by Lemma \ref{weakconverge}, $\p_t \psi_n \rhu \p_t \psi$ weakly in $L^2(\Omega \times (0,T))$. Therefore $\chi_Q \p_t \psi_n \rhu \chi_Q \p_t \psi$ weakly in $L^2(\Omega \times (0,T))$ and so
\begin{equation}
	\int_0^T \int_{\Omega} \chi_Q |\p_t \psi|^2 dx dt \leq \liminf_{n \ra \infty} \int_0^T \int_{\Omega} \chi_Q |\p_t \psi_n|^2 dx dt =0.
\end{equation}

Thus $\psi \in N_T$ and by Lemma \ref{invisiblelemma}, $\psi=c$, for some constant $c$. So $(\psi_0, \psi_1)=(c,0)$ and $(\psi_{0,n}, \psi_{1,n})$ converges to $(c,0)$ weakly in $H^1(\Omega) \times L^2(\Omega)$.  By Rellich-Kondrachov, this convergence is strong in $L^2(\Omega) \times H^{-1}(\Omega)$. But since $\int_\Omega \psi_{0,n} dx=0$ for all $n$, then $c=0$, so $(\psi_{0,n}, \psi_{1,n})$ strongly converges to $(0,0)$ in $L^2(\Omega) \times H^{-1}(\Omega)$. Put another way 
\begin{equation}\label{eq:psiweak0}
	\limn \lp{\psi_{0,n}}{2} + \nm{\psi_{1,n}}_{H^{-1}} = 0. 
\end{equation}

On the other hand by Lemma \ref{weakobserve} 
\begin{equation}
	\ltwo{\nabla \psi_{0,n}}^2+\ltwo{\psi_{1,n}}^2 \leq \hp{\psi_{0,n}}{1}^2 + \ltwo{\psi_{1,n}}^2 
	\leq C \left( \ltwooo{\chi_Q \p_t \psi_n}^2 +  \lp{\psi_{0,n}}{2} + \nm{\psi_{1,n}}_{H^{-1}} \right).
\end{equation}
By \eqref{defecterrorcontra} the left hand side equals 1 for all $n$, while by \eqref{defecterrorcontra} and \eqref{eq:psiweak0} the right hand side goes to $0$ as $n \ra \infty$, which is a contradiction. So \eqref{eq:observe} holds when $\int_\Omega \psi_0 dx=0$. 

When 
\begin{equation}
	\frac{1}{\text{vol}(\Omega)}\int_\Omega \psi_0 dx =\overline{\psi_0} \neq 0,
\end{equation}
note that if $\psi(x,t)$ solves $\Box \psi=0$ with initial data $(\psi_0,\psi_1)$, then  $\ti{\psi}(x,t)=\psi(x,t)-\overline{\psi_0}$ solves $\Box \ti{\psi}=0$ with initial data $(\psi_0-\overline{\psi_0}, \psi_1)$. The observability inequality can be applied to $\ti{\psi}$ and the constant $\overline{\psi_0}$ drops out due to the derivatives. This proves \eqref{eq:observe} for general initial data.
\end{proof} 

\subsection{Proof of Uniform Wave Observability, Proposition \ref{uniobserveprop}}\label{propsection}
With Lemma \ref{observeprop}, Proposition \ref{uniobserveprop} can be proved via a contradiction argument. Note that in this subsection the boundary conditions can be Dirichlet or Neumann, or the boundary can be empty.
\begin{proof}[Proof of Proposition \ref{uniobserveprop}]
Assume the desired conclusion does not hold, so for some $T \geq T_0$, there exist sequences $t_j \in [0,\infty)$ and $\psi_j \in L^2((0, \infty); H)$ solving \eqref{WEb}, with $\p_t \psi_j \in L^2((0, \infty); L^2(\Omega))$ such that 
\begin{equation}
	E(\psi_j, t_j) =1, \qquad \limj \int_{t_j}^{t_j+T}\int_{\Omega} W |\p_t \psi_j|^2 dx dt= 0.
\end{equation}
Then let $v_j(x,t) = \psi_j(x,t+t_j)$ and $W_j(x,t) = W(x,t+t_j),$ so $ (\p_t^2 +A) v_j = 0$
and 
\begin{equation}\label{contra}
E(v_j, 0) =1, \qquad \limj \int_0^T \int_{\Omega} W_j  |\p_t v_j|^2 dx dt = 0.
\end{equation}
To relate the quantities in \eqref{contra} and obtain a contradiction, we would like to apply the observability inequality of Lemma \ref{observeprop}. In order to do so in a uniform fashion, we must replace $W_j$ by a fixed function satisfying Assumption \ref{ftGCC}, which is still close to the $W_j$. 

To do so, first note that $\{W_j\}$ forms a pointwise bounded family, since $W \in L^{\infty}(\Omegab \times [0,\infty))$, and $\{W_j\}$ is an equicontinuous family in $C(\Omegab \times [0,T])$, by the uniform continuity of $W$ on $\Omegab \times [0,\infty)$. Therefore by Arzel\`a-Ascoli \cite[Theorem 7.25] {BabyRudin} there exists $W_{\infty} \in C(\Omegab \times [0,T])$ such that, after potentially replacing $W_j$ by a subsequence, $W_j \ra W_{\infty}$ in $L^{\infty}(\Omegab \times [0,T])$. 

Now recall $\Cm$ from Assumption \ref{TGCCp}, and we claim that $\left\{\Winf>\frac{\Cm}{2}\right\}$ satisfies Assumption \ref{ftGCC}. To see this choose $J$ large enough so that $\|\Winf-W_j\|_{L^{\infty}(\Omega \times [0,T])}< \frac{\Cm}{2}$ for $j \geq J$. For any generalized geodesic $\gamma(t)$, since $T \geq T_0$, by Assumption \ref{TGCCp}
\begin{align}
\frac{1}{T} \int_0^T \Winf(\gamma(t),t)dt &= \frac{1}{T} \int_0^T W_j(\gamma(t),t) dt + \frac{1}{T} \int_0^T \Winf(\gamma(t),t)-W_j(\gamma(t),t)dt\\
&\geq \frac{1}{T} \int_0^T W(\gamma(t),t+t_j) dt - \frac{1}{T} \int_0^T \|\Winf-W_j\|_{L^{\infty}(\Omega \times [0,T])}  dt \\
&\geq \Cm - \frac{\Cm}{2}=\frac{\Cm}{2}.
\end{align}
That is, for each generalized geodesic $\gamma$ the average of $\Winf$ along $\gamma$ is at least $\frac{\Cm}{2}$. Therefore, for each generalized geodesic $\gamma$, there exists $s_0 \in [0,T]$ such that  $\Winf(\gamma(s_0), s_0) \geq \frac{\Cm}{2}$. That is $\{W_{\infty}>\frac{\Cm}{2}\}$ satisfies Assumption \ref{ftGCC}. 

So now applying the observability inequality, Lemma \ref{observeprop} or \cite{LRLTT} Theorem 1.8, there exists $C_2>0$ such that 
\begin{equation}
	E(v_j, 0) \leq C_2 \int_0^T \int_{\Omega} W_{\infty} |\p_t v_j|^2 dx dt.
\end{equation}
Note that this $C_2$ is uniform in $j$ because the observation set $\{W_{\infty}>\frac{\Cm}{2}\}$ does not depend on $j$. 
To relate this inequality back to \eqref{contra}, the $W_{\infty}$ on the right hand side must be replaced by $W_j$. We can do so directly
\begin{align}\label{wjalign}
 E(v_j, 0) & \leq  C_2 \int_{0}^T \int_{\Omega} W_{\infty} |\p_t v_j|^2 dx dt \nonumber \\
&= C_2 \int_0^T \int_{\Omega} |W_j -W_j+W_{\infty}| |\p_t v_j |^2 dx dt \nonumber \\
& \leq  C_2 \int_0^T \int_{\Omega} W_j |\p_t v_j|^2dx dt  + C_2 \lp{W_j-W_{\infty}}{\infty}  \int_0^T \int_{\Omega} |\p_t v_j|^2 dx dt 
\end{align}
Since $v_j$ solves the wave equation, recall $E(v_j,0)=E(v_j,t)$ for all $t$. Thus 
\begin{equation}
	\int_0^T \int_{\Omega} |\p_t v_j|^2 \leq 2\int_0^T E(v_j,t) dt = 2T E(v_j, 0).
\end{equation} 
Now choose $J$ large enough, such that $\lp{W_j - W_{\infty}}{\infty} < \frac{1}{4T C_2}$ for $j  \geq J$. Then
\begin{equation}
	C_2 \lp{W_j - W_{\infty}}{\infty}\int_0^T \int_{\Omega} |\p_t v_j|^2 dx dt \leq \frac{1}{2}E(v_j,0).
\end{equation}
This term can be absorbed back into the left hand side of \eqref{wjalign} to give
\begin{equation}\label{eq:obscontra}
	\frac{1}{2} E(v_j,0) \leq C_2 \int_0^T \int_{\Omega} W_j |\p_t v_j|^2 dx dt.
\end{equation}
By the first part of \eqref{contra} the left hand side equals $\frac{1}{2}$, while by the second part of \eqref{contra} the right hand side goes to $0$ as $j \ra \infty$. This is a contradiction, so the desired conclusion must hold. 
\end{proof}

\begin{remark}
Although the contradiction argument concludes with an inequality \eqref{eq:obscontra} exactly matching the form of Proposition \ref{uniobserveprop}, the proof cannot be easily rewritten to proceed directly. This is because the observability constant $C_2$ is only uniform by virtue of the contradiction argument. Proceeding directly from Lemma \ref{observeprop} does not work because the observability constant $C_2$ depends on the behavior of $W$ on $[t_0, t_0+T]$ and thus may change as $t_0$ changes.  
\end{remark}

\section{Upper bound on uniform stabilization rates, proof of Theorem \ref{bdecaythm}}\label{s:bdecaythm}
In this section we obtain uniform stabilization rates for damping that potentially tends to $0$ as $t \ra \infty$, Theorem \ref{bdecaythm}. We do so by first connecting observability estimates for the wave equation with observability estimates for the damped wave equation. These damped wave observability estimates are then converted to a uniform stabilization rate, as observability estimates give control over the time derivative of the energy. We finally show that if the damping satisfies the hypotheses of Theorem \ref{bdecaythm}, then solutions of \eqref{TDWEb} satisfy a damped wave observability inequality. From this we will conclude the desired uniform stabilization rate. See \cite{Haraux1989} for an analogous argument when the damping does not depend on time. 

We begin with an equivalence between observability for the damped wave equation and observability for the standard wave equation with the same initial data, when the observability operator is the damping. 
The exact statement used here is \cite[Lemma 3.3]{PaunonenSeifert2019}. 
\begin{lemma}\label{observeconnection}
Let $(u_0, u_1) \in H \times L^2(\Omega)$. Suppose $W \in L^{\infty}(\Omega \times [0,T])$, $u$ solves \eqref{TDWEb} and $\psi$ solves \eqref{WEb} with 
\begin{equation}
	(u, \p_t u)|_{t=0}= (\psi, \p_t \psi)|_{t=0}=(u_0, u_1).
\end{equation}
Then for any $t_0, T \geq 0$, letting $C_T=1+2T \lp{W}{\infty}$
\begin{equation}
	\int_{t_0}^{t_0+T} \int_{\Omega} W |\p_t u|^2 dx dt \leq \int_{t_0}^{t_0+T} \int_{\Omega} W |\p_t \psi|^2 dx dt \leq C_T^2 \int_{t_0}^{t_0+T} \int_{\Omega} W| \p_t u|^2 dx dt.
\end{equation}
\end{lemma}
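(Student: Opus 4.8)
The plan is to control both observation integrals through the single auxiliary function $v := \psi - u$. Because $u$ solves \eqref{TDWEb} and $\psi$ solves \eqref{WEb} with the same Cauchy data, $v$ satisfies $(\p_t^2 + A)v = 2W\,\p_t u$ together with the homogeneous boundary condition $Bv = 0$ (when $\p\Omega\neq\emptyset$) and zero Cauchy data at the initial time — which, after translating $W$ in $t$, we take to be the left endpoint $t_0$ of the observation window. The one tool I need is the energy identity for $v$: since $A$ is self-adjoint with domain encoding $B$, no boundary terms survive, so $\frac{d}{dt}E(v,t) = \Re\int_\Omega 2W\,\p_t u\,\overline{\p_t v}\,dx$ for a.e.\ $t\in(t_0,t_0+T)$. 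I would record this first for smooth (or strong) solutions and then pass to the weak solutions of \eqref{TDWEb}, \eqref{WEb} by density, using that the regularity $u,\psi\in L^2_{\mathrm{loc}}((0,\infty);H)$ with time derivatives in $L^2_{\mathrm{loc}}((0,\infty);L^2(\Omega))$ is exactly what makes the right-hand side finite. I expect this — the vanishing of boundary terms together with enough regularity to differentiate $E(v,\cdot)$ — to be the only genuinely technical point; everything after it is Cauchy--Schwarz bookkeeping.

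For the first (lower) inequality I would integrate the energy identity over $[t_0,t_0+T]$. As $v$ has zero Cauchy data at $t_0$ we get $E(v,t_0)=0$, and $E(v,t_0+T)\geq0$, so after substituting $\p_t v = \p_t\psi-\p_t u$,
\[
2\int_{t_0}^{t_0+T}\!\!\int_\Omega W|\p_t u|^2\,dx\,dt \;=\; \Re\int_{t_0}^{t_0+T}\!\!\int_\Omega 2W\,\p_t u\,\overline{\p_t\psi}\,dx\,dt \;-\; E(v,t_0+T)\;\leq\; \Re\int_{t_0}^{t_0+T}\!\!\int_\Omega 2W\,\p_t u\,\overline{\p_t\psi}\,dx\,dt .
\]
Bounding the right-hand side by the $W$-weighted Cauchy--Schwarz inequality on $(t_0,t_0+T)\times\Omega$ and cancelling one factor of $\big(\int\!\int W|\p_t u|^2\big)^{1/2}$ (the case where this vanishes being trivial) yields $\int\!\int W|\p_t u|^2 \leq \int\!\int W|\p_t\psi|^2$.

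For the second (upper) inequality I would use Minkowski's inequality to write $\|\sqrt{W}\,\p_t\psi\|_{L^2((t_0,t_0+T)\times\Omega)} \leq \|\sqrt{W}\,\p_t u\|_{L^2((t_0,t_0+T)\times\Omega)} + \|\sqrt{W}\,\p_t v\|_{L^2((t_0,t_0+T)\times\Omega)}$, so it suffices to bound the last term by $2T\lp{W}{\infty}\,\|\sqrt{W}\,\p_t u\|_{L^2((t_0,t_0+T)\times\Omega)}$. From the energy identity and Cauchy--Schwarz in $x$, $\frac{d}{dt}\sqrt{E(v,t)} \leq \sqrt{2\lp{W}{\infty}}\;\|\sqrt{W}\,\p_t u(\cdot,t)\|_{L^2(\Omega)}$; integrating from $t_0$ (zero data) and applying Cauchy--Schwarz in $t$ gives $\sup_{t\in[t_0,t_0+T]}E(v,t) \leq 2T\lp{W}{\infty}\int_{t_0}^{t_0+T}\!\int_\Omega W|\p_t u|^2$. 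Hence $\int_{t_0}^{t_0+T}\!\int_\Omega W|\p_t v|^2 \leq \lp{W}{\infty}\int_{t_0}^{t_0+T}\!\int_\Omega |\p_t v|^2 \leq 2T\lp{W}{\infty}\sup_{t\in[t_0,t_0+T]}E(v,t) \leq \big(2T\lp{W}{\infty}\big)^2\int_{t_0}^{t_0+T}\!\int_\Omega W|\p_t u|^2$. Collecting constants gives the stated bound with $C_T = 1+2T\lp{W}{\infty}$.

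Two organizational remarks. First, the whole argument lives on the fixed window $[t_0,t_0+T]$ and uses only that $u$ and $\psi$ agree at its left endpoint, so $C_T$ depends on $T$ and the global bound $\lp{W}{\infty}$ but not on $t_0$ — which is the feature needed in Section~\ref{s:bdecaythm}. Second, the one delicate step, the differentiated energy identity for $v$, can be sidestepped entirely by working from the start with the integrated (weak) energy identity obtained by pairing the equation for $v$ against $\p_t v$, which is legitimate at the available regularity; I would keep this in reserve as the fallback.
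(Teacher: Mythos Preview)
The paper does not actually prove this lemma; it simply cites \cite[Lemma 3.3]{PaunonenSeifert2019} as the source of the exact statement. Your proof is therefore supplying an argument the paper omits, and the approach you outline is correct and is in fact the standard one: introduce $v=\psi-u$, use the energy identity for the forced wave equation $(\p_t^2+A)v=2W\p_t u$ with zero Cauchy data at the left endpoint, and close with weighted Cauchy--Schwarz and Minkowski. Both inequalities go through as you wrote them, and the constant $C_T=1+2T\lp{W}{\infty}$ comes out exactly.

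One remark on the statement itself: as written in the paper the Cauchy data are matched at $t=0$ while the observation window is $[t_0,t_0+T]$, which would make the argument for general $t_0>0$ fail (since $E(v,t_0)\neq 0$ in general). You handled this by translation, effectively assuming the data are matched at $t_0$; that is the correct reading, and it is consistent with every application of the lemma in the paper (Lemmas \ref{decreasingdwobserve}, \ref{shrinkingdwobserve} and Proposition \ref{growingprop}), where $\psi$ is always chosen so that $(\psi,\p_t\psi)$ agrees with $(u,\p_t u)$ at the left endpoint of the window. So your translation is not a dodge but the intended interpretation.
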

Now we relate damped wave observability estimates with uniform stabilization rates. Note that we allow the observability constant to depend on the starting time of the observation time interval.
\begin{lemma}\label{observetodecay}
Suppose $u$ solves \eqref{TDWEb}, and let $b: \Nb \ra [0,\infty)$. Assume there exists $T_0>0,$ such that for any $j \in \Nb_0$
$$
b(jT_0) E(u,jT_0) \leq \int_{jT_0}^{(j+1)T_0} \int_{\Omega} W |\p_t u|^2 dx dt. 
$$
Then, defining $B(k)=\sum_{j=0}^{k-1} b(jT_0)$, we have 
$$
E(u,kT_0) \leq E(u,0) \exp(-B(k)).
$$
\end{lemma}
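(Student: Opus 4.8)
The plan is to turn the hypothesized window-by-window observability bound into a geometric (in fact exponential) decay of the sampled energies $E(u,kT_0)$ via a discrete iteration, with the energy dissipation identity as the sole analytic ingredient. The starting point is that any solution $u$ of \eqref{TDWEb} satisfies
\[
\frac{d}{dt}E(u,t) = -2\int_{\Omega} W(x,t)\,|\p_t u(x,t)|^2\,dx,
\]
so that for all $0\le s\le t$,
\[
E(u,s)-E(u,t)=2\int_s^t\int_{\Omega} W\,|\p_t u|^2\,dx\,dt',
\]
and in particular $t\mapsto E(u,t)$ is non-increasing. For the weak solutions of \eqref{TDWEb} this identity is classical, following by a regularization/density argument from the smooth case after pairing the equation with $\p_t u$ and integrating by parts (the boundary terms vanish for the Dirichlet and Neumann conditions); I would simply invoke it. This is the only place where PDE theory enters and is the closest thing to an ``obstacle,'' though a routine one.

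Next I would specialize the dissipation identity to the window $[jT_0,(j+1)T_0]$ and feed in the hypothesis of the lemma:
\[
E(u,jT_0)-E(u,(j+1)T_0)=2\int_{jT_0}^{(j+1)T_0}\int_{\Omega} W\,|\p_t u|^2\,dx\,dt \ \ge\ 2\,b(jT_0)\,E(u,jT_0),
\]
which rearranges to the one-step recursion
\[
E(u,(j+1)T_0)\le \bigl(1-2b(jT_0)\bigr)\,E(u,jT_0).
\]

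Then I would convert this linear recursion into an exponential one using the elementary bound $1-2x\le e^{-x}$, valid for all $x\ge 0$: since $E(u,jT_0)\ge 0$ and $1-2b(jT_0)\le e^{-b(jT_0)}$, multiplying through by $E(u,jT_0)$ gives
\[
E(u,(j+1)T_0)\le e^{-b(jT_0)}\,E(u,jT_0),
\]
with no case analysis on the size of $b(jT_0)$ needed, because $(1-2x)E\le e^{-x}E$ holds for every $E\ge 0$. Finally, iterating this bound for $j=k-1,k-2,\dots,0$ and multiplying the positive factors $e^{-b(jT_0)}$ yields
\[
E(u,kT_0)\le\Bigl(\prod_{j=0}^{k-1}e^{-b(jT_0)}\Bigr)E(u,0)=\exp\!\Bigl(-\sum_{j=0}^{k-1}b(jT_0)\Bigr)E(u,0)=E(u,0)\exp(-B(k)),
\]
which is the claim. (The argument in fact produces the slightly stronger rate $\exp(-2B(k))$, but only $\exp(-B(k))$ is needed in the sequel.)
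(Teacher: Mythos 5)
Your proof is correct and follows essentially the same route as the paper: integrate the energy dissipation identity over each window $[jT_0,(j+1)T_0]$, insert the hypothesis to get the one-step contraction, iterate to a product, and bound the product by the exponential via $1-x\le e^{-x}$ (the paper phrases the same elementary inequality as $\ln x\le x-1$). The only differences are cosmetic: you retain the factor $2$ coming from the damping coefficient $2W$ (the paper's stated identity drops it, which only costs a harmless constant), and your pointwise inequality $1-2b\le e^{-b}$ sidesteps taking logarithms of possibly nonpositive factors.
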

\begin{proof}
To begin note 
\begin{equation}
	\p_t E(u,t) = -\int_{\Omega} W |\p_t u(x,t)|^2 dx.
\end{equation}  
Integrating this from $jT_0$ to $(j+1)T_0$, then applying the assumed observability estimate 
\begin{align}
E(u,(j+1)T_0) - E(u,jT_0) &= - \int_{jT_0}^{(j+1)T_0} \int_{\Omega} W |\p_t u|^2 dx dt \\
&\leq -b(jT_0) E(u,jT_0).
\end{align}
Rearranging we see 
\begin{equation}
	E(u,(j+1)T_0) \leq (1- b(jT_0)) E(u,jT_0).
\end{equation} 
So a fraction of the energy, specified by $1-b(jT_0)$, is lost from time $jT_0$ to $(j+1)T_0$.

Applying this iteratively from $j=0$ to $j=k-1$ we obtain
\begin{equation}\label{eq:EukTfinal}
	E(u,kT) \leq  E(u,0) \prod_{j=0}^{k-1} (1- b(jT_0)).
\end{equation}
Now noting that $\ln(x) \leq x-1$, we have 
\begin{equation}
	\ln \left( \prod_{j=0}^{k-1} (1-b(jT_0)) \right)=\sum_{j=0}^{k-1} \ln(1- b(jT_0)) \leq \sum_{j=0}^{k-1} - b(jT_0) = - B(k).
\end{equation}
Exponentiating both sides of this equation and applying \eqref{eq:EukTfinal} gives the desired conclusion. 
\end{proof}
\begin{remark}
	Some structure is required for time-dependent damping to obtain such an observability estimate. In the autonomous case, this $b$ is related to $\Sigma(t)$ or $L(t)$, but explicit formulas require taking $t \ra \infty$ \cite{HumbertPrivatTrelat2019} or give rough bounds like \cite[Theorem 1.2]{LaurentLeautaud2016} 
	\begin{equation}
		C \exp(-c(\Sigma((j+1)T_0)-\Sigma(jT_0))^{-1}) \leq b(jT_0).
	\end{equation}
Compare summing these $b(jT_0)$ to get $B(k)$ with our eventual estimate of $B(k)$ \eqref{eq:sigmadecrease}.
There are not analogous results for general time-dependent damping. In this case, we avoid this issue by fixing the observation function $\ti{W}$ to obtain a uniform observability constant, and then separately track how the damping tending to $0$ changes the observability inequality. See also Section \ref{examplesection} for additional examples where uniform stabilization rates are obtained via observability estimate arguments.
\end{remark}

We now prove such an observability estimate for damping satisfying the hypotheses of Theorem \ref{bdecaythm}. Note that the size of the observability constant depends on the decreasing function $b(t)$.
\begin{lemma}\label{decreasingdwobserve}
Suppose $u$ solves \eqref{TDWEb}, with $W(x,t)=f(x,t) \ti{W}(x,t)$, $b(t) \simeq f(x,t)$, and $T_0 > 0$ all as in Theorem \ref{bdecaythm}. Then there exists $C >0$, such that for all $t_0 \geq 0$
\begin{equation}
C b(t_0+T_0) E(u,t_0) \leq \int_{t_0}^{t_0+T_0} \int_{\Omega} W|\p_t u|^2 dx dt.
\end{equation}
\end{lemma}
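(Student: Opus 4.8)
The plan is to chain the uniform wave observability inequality (Proposition \ref{uniobserveprop}), applied with the \emph{fixed} observation function $\ti{W}$, with the damped-versus-undamped observability comparison (Lemma \ref{observeconnection}), and then to absorb the time-decay of $W$ into the constant using monotonicity of $b$. Throughout, $T_0$ is taken to be (at least) the time appearing in Assumption \ref{TGCCp}, so that Proposition \ref{uniobserveprop} applies at time scale $T_0$.

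First I would fix $t_0 \geq 0$ and let $\psi$ solve the standard wave equation \eqref{WEb} with data at time $t_0$ equal to that of $u$, i.e. $(\psi,\p_t\psi)|_{t=t_0}=(u,\p_t u)|_{t=t_0}$ (this determines a global $\psi$ since the wave equation is time-reversible), so that $E(\psi,t_0)=E(u,t_0)$. Since $\ti{W}\in C^0_u(\overline{\Omega}\times[0,\infty))$ is nonnegative and satisfies Assumption \ref{TGCCp}, Proposition \ref{uniobserveprop} applied with observation function $\ti{W}$ and starting time $t_0$ produces a constant $C_1>0$, independent of $t_0$, with
\[
E(u,t_0)=E(\psi,t_0)\leq C_1\int_{t_0}^{t_0+T_0}\int_{\Omega}\ti{W}\,|\p_t\psi|^2\,dx\,dt.
\]
Next I would insert the full damping $W=f\ti{W}$ on the right. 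From $C_m b(t)\leq f(x,t)$ and the fact that $b$ is decreasing, for $t\in[t_0,t_0+T_0]$ we get $W(x,t)=f(x,t)\ti{W}(x,t)\geq C_m b(t)\ti{W}(x,t)\geq C_m b(t_0+T_0)\ti{W}(x,t)$, hence
\[
\int_{t_0}^{t_0+T_0}\int_{\Omega}\ti{W}\,|\p_t\psi|^2\,dx\,dt\leq \frac{1}{C_m b(t_0+T_0)}\int_{t_0}^{t_0+T_0}\int_{\Omega}W\,|\p_t\psi|^2\,dx\,dt.
\]
Finally I would pass from $\psi$ to $u$ via Lemma \ref{observeconnection}: after the time shift $t\mapsto t+t_0$ (so that $u(\cdot,\cdot+t_0)$ and $\psi(\cdot,\cdot+t_0)$ agree at time $0$ and solve the shifted damped and undamped equations with damping $W(\cdot,\cdot+t_0)$), that lemma gives $\int_{t_0}^{t_0+T_0}\int_{\Omega}W|\p_t\psi|^2\leq C_{T_0}^2\int_{t_0}^{t_0+T_0}\int_{\Omega}W|\p_t u|^2$ with $C_{T_0}=1+2T_0\|W\|_{L^\infty(\Omega\times[0,\infty))}$, again independent of $t_0$. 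Combining the three displays yields the claim with $C=C_m\big(C_1(1+2T_0\|W\|_{L^\infty})^2\big)^{-1}>0$.

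The only real point to watch is that every constant be independent of $t_0$: this is precisely why the uniform-in-$t_0$ statement of Proposition \ref{uniobserveprop} is needed rather than the non-uniform Lemma \ref{observeprop}, and it also uses that $C_{T_0}$ depends only on $T_0$ and $\|W\|_{L^\infty}$. The remaining bookkeeping — the time shift needed to apply Lemma \ref{observeconnection} on $[t_0,t_0+T_0]$, and using monotonicity of $b$ to replace $b(t)$ by its minimal value $b(t_0+T_0)$ over the window — is routine, and there is no serious analytic obstacle beyond this.
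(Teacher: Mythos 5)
Your proposal is correct and follows essentially the same route as the paper: match $\psi$ to $u$ at time $t_0$, apply the uniform observability of Proposition \ref{uniobserveprop} with observation function $\ti{W}$, use $f \geq C_m b(t) \geq C_m b(t_0+T_0)$ (monotonicity of $b$) to pass to $W$, and then transfer from $\psi$ to $u$ via Lemma \ref{observeconnection}, yielding the same constant $C = C_m\bigl(C_1(1+2T_0\|W\|_{L^\infty})^2\bigr)^{-1}$. Your explicit remarks about uniformity in $t_0$ and the time shift when invoking Lemma \ref{observeconnection} are exactly the points the paper relies on implicitly.
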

\begin{proof}
Let $\psi$ solve \eqref{WEb} with $(\psi, \p_t \psi)|_{t=t_0} = (u, \p_t u)|_{t=t_0}$. Since  $\ti{W}$ satisfies Assumption \ref{TGCCp}, by Proposition \ref{uniobserveprop} there exists $C_1>0$ such that for all $t_0 \geq 0$
\begin{equation}
E(u,t_0) = E(\psi,t_0) \leq C_1 \int_{t_0}^{t_0+T_0} \int_{\Omega} \ti{W}(x,t) |\p_t \psi|^2 dx dt.
\end{equation}
Then since $\ti{W}(x,t)=\frac{W(x,t)}{f(x,t)}$, $f(x,t) \geq c_m b(t),$ and $b$ is decreasing
\begin{equation}
E(u,t_0) \leq C_1 \int_{t_0}^{t_0+T_0} \int_{\Omega} \frac{W(x,t)}{f(x,t)} |\p_t \psi|^2 dx dt \leq \frac{C_1}{c_m b(t_0+T_0)} \int_{t_0}^{t_0+T_0} \int_{\Omega} W |\p_t \psi|^2 dx dt.
\end{equation}
Applying Lemma \ref{observeconnection} 
\begin{equation}
E(u,t_0) \leq \frac{C_1}{c_m b(t_0+T_0)} (1+2 T_0 \lp{W}{\infty})^2 \int_{t_0}^{t_0+T_0} \int_{\Omega} W |\p_t u|^2 dx dt. 
\end{equation}
Multiplying both sides by $\frac{c_m b(t_0+T_0)}{C_1 (1+2T_0 \lp{W}{\infty})^2}$ gives the desired conclusion. Note that we require $W \in L^{\infty}(M \times [0,\infty))$ to complete this step, which is why we only consider $b(t)$ decreasing, and do not consider $b(t)$ increasing without bound.
\end{proof}

Now we can complete the proof of Theorem \ref{bdecaythm}. We apply Lemma \ref{decreasingdwobserve}  to obtain damped wave equation observability inequalities, then apply Lemma \ref{observetodecay} to convert the observability inequalities into an inequality involving $E(u,t)$ and $B(k)$, and finally estimate $B(k)$ to obtain a uniform stabilization rate for solutions of \eqref{TDWE}.
\begin{proof}[Proof of Theorem \ref{bdecaythm}]
By Lemma \ref{decreasingdwobserve}, there exists $C>0$ such that for all $j \in \mathbb{N}$
\begin{equation}
C b((j+1)T_0) E(u,jT_0) \leq \int_{jT_0}^{(j+1)T_0} \int_M W |\p_t u|^2 dx dt. 
\end{equation}
Then by Lemma \ref{observetodecay}, letting $B(k) = \sum_{j=0}^{k-1} C b((j+1)T_0)$
\begin{equation}\label{eq:Bkenergydecrease}
E(u,kT_0) \leq E(u,0) \exp(-B(k)).
\end{equation}
Now since $b$ is decreasing, we have $b(a) \geq \frac{1}{T_0} \int_a^{a+T_0}b(x) dx$, and so we can directly estimate
\begin{align}
B(k) &\geq \sum_{j=0}^{k-1} C \int_{(j+1)T_0}^{(j+2)T_0} b(t) dt = \frac{1}{T_0} \int_{T_0}^{(k+1)T_0} C b(t) dt 
\\
&= \frac{1}{T_0} \int_0^{(k+1)T_0} Cb(t) dt - \frac{1}{T_0} \int_0^{T_0} Cb(t) dt  \\
&\geq \frac{1}{T_0} \int_0^{(k+1)T_0} C b(t) dt - C b(0).
\end{align}
Since $\lp{\ti{W}}{\infty} < \infty$ and $f(x,t) \leq C_M b(t)$, for any unit-speed generalized geodesic $\gamma$ we have 
\begin{equation}
	b(t) \geq \frac{1}{\lp{\ti{W}}{\infty} C_M} \ti{W}(\gamma(t), t) f(\gamma(t),t).
\end{equation}
Integrating both sides and recalling the definition of $\Sigma(t)$
\begin{align}
B(k) +Cb(0) &\geq \frac{1}{T_0} \int_0^{(k+1)T_0} b(t) dt \\
&\geq \frac{1}{C_M T_0 \lp{\ti{W}}{\infty} } \int_0^{(k+1)T_0} \ti{W}(\gamma(t),t) f(\gamma(t),t) dt \geq \frac{\Sigma((k+1)T_0)}{C_M T_0 \lp{\ti{W}}{\infty}}. \label{eq:sigmadecrease}
\end{align}
Combining \eqref{eq:Bkenergydecrease} and \eqref{eq:sigmadecrease}, there exists $C,c>0$ such that 
\begin{equation}
E(u,kT_0) \leq C E(u,0) \exp(-c \Sigma((k+1)T_0).
\end{equation}
Since $E(u,t)$ is non-increasing and $\Sigma(t)$ is increasing, for $t \in [kT_0, (k+1)T_0]$ we have
\begin{equation}
E(u,t) \leq E(u,kT_0) \leq C E(u,0) \exp(-c\Sigma((k+1)T_0)) \leq C E(u,0) \exp(-c\Sigma(t)).
\end{equation}
Since every $t \geq 0$ is contained in such an interval for some $k$, this gives the desired energy decay.
\end{proof}

\section{Non-exponential uniform stabilization rates}\label{examplesection}
In this section, upper and lower bounds on uniform stabilization rates are proved for three examples, which do not satisfy the time dependent geometric control condition, Assumption \ref{TGCC}.
\begin{enumerate}
	\item Damping tending to $0$ at a polynomial rate as $t \ra \infty$ .
	\item Damping which turns on and off, with fixed length time intervals where it is positive and growing length intervals where it is 0.
	\item Damping which turns on and off, with shrinking length time intervals where it is positive and fixed length intervals where it is 0.
\end{enumerate}
To obtain upper bounds on uniform stabilization rates, we follow the same general approach used in Section \ref{s:bdecaythm}. To summarize: we obtain quantitative observability estimates for solutions of the damped wave equation via observability estimates for the wave equation, then convert these into uniform stabilization rates.  We obtain lower bounds by estimating $\Sigma(t)$ and applying Theorem \ref{lowerboundthm}. 

One notable feature of the uniform stabilization rates that we obtain is that most are not exponential. This runs counter to the usual behavior of solutions to the damped wave equation with autonomous damping. When the damping is autonomous, solutions form a semigroup and all uniform stabilization is automatically at an exponential rate. Because of this, uniform stabilization is equivalent to the geometric control condition for autonomous damping. When the damping is time-dependent, solutions no longer form a semigroup. Thus uniform stabilization could occur at non-exponential rates, and need not be equivalent to the time-dependent geometric control condition. The examples in this section exactly show that this possibility occurs.

\subsection{Damping tending to $0$ polynomially in $t$}
First, as a direct application of Theorems \ref{lowerboundthm} and \ref{bdecaythm}, we obtain uniform stabilization rates for damping tending to $0$ at a polynomial rate as $t \ra \infty$. Note that both Theorems \ref{lowerboundthm} and \ref{bdecaythm} apply to damping tending to $0$ at non-polynomial rates, and we choose to work with polynomials just to simplify the exposition of this example. 
\begin{example}
Suppose $\p \Omega=\emptyset$, $W(x,t)=\ti{W}(x,t) f(x,t)$ as in Theorem \ref{bdecaythm}, and $b(t)=(t+1)^{-\beta} \simeq f(x,t)$ with $\beta \geq 0$.
\begin{enumerate}
	\item If $0 \leq \beta<1$, there exists $C,c>0$ such that for all $u$ solving \eqref{TDWE}
	\begin{equation}
		E(u,t) < C E(u,0) \exp(-c t^{1-\beta}).
	\end{equation}
	Furthermore, if $c> \frac{2C_M}{(1-\beta)}\lp{\ti{W}}{\infty},$ this inequality cannot hold for all $u$ solving \eqref{TDWE}.
	\item If $\beta=1$, there exists $C,c>0$ such that for all $u$ solving \eqref{TDWE}
	\begin{equation}
		E(u,t) < C E(u,0) (t+1)^{-c}.
	\end{equation}
	Furthermore,  if $c>2C_M \lp{\ti{W}}{\infty},$ this inequality cannot hold for all $u$ solving \eqref{TDWE}.
	\item If $\beta>1$, there can not be a uniform stabilization rate. 
\end{enumerate}
\end{example}
The presence, or respectively lack, of a uniform stabilization rate for $\beta \leq 1$, respectively $\beta>1$, agrees with the distinction between ``effective" and ``non-effective" dissipation of \cite{Wirth2004, Wirth2006, Wirth2007, Matsumura1977} but our result allows for more general damping. For example $f$ need not be decreasing in $t$ and $W$ can be (not-identically) 0. 

Note that when $\beta>0$, the time dependent geometric control condition, Assumption \ref{TGCC}, is not satisfied yet uniform stabilization occurs. Taking $\beta=0$, Assumption \ref{TGCC} is satisfied, and we recover Theorem \ref{mainresult}.
\begin{proof}
The explicit rates follow from estimating $\Sigma(t)$ on intervals $[(j-1)T_0, jT_0]$ using that $f(x,t)\geq (t+1)^{-\beta}$
\begin{align}
\Sigma(kT_0) &= \inf_{\gamma} \sum_{j=1}^{k} \int_{(j-1)T_0}^{j T_0} \ti{W}(\gamma(t),t) f(\gamma(t), t) dt \\
&\geq \inf_{\gamma} \sum_{j=1}^{k} C_m (j T_0 +1)^{-\beta} \int_{(j-1) T_0}^{j T_0} \ti{W}(\gamma(t), t) dt.
\end{align}
Then since $\ti{W}$ satisfies Assumption \ref{TGCC}
\begin{align}
\Sigma(kT_0) &\geq C_m \Cm T_0 \sum_{j=1}^{k}  (j T_0 +1)^{-\beta}. 
\end{align}
Now using that $(tT_0+1)^{-\beta}$ is decreasing in $t$ and making a change of variables to evaluate the integral 
\begin{align}
\Sigma(kT_0)&\geq c \int_1^{k+1} (tT_0 +1)^{-\beta} dt \\
&\geq c \int_{T_0}^{(k+1) T_0} (t+1)^{-\beta} dt \\
&= \begin{cases}
\frac{c}{1-\beta} \left( ((k+1) T_0+1)^{1-\beta} -(T_0+1)^{1-\beta} \right) &\beta \neq 1 \\
c \left( \ln((k+1)T_0+1) - \ln(T_0+1) \right) & \beta=1.
\end{cases}
\end{align}
Plugging this into Theorem \ref{bdecaythm}, relabeling $c$ and $C$, and using that $E(u,t)$ is non-increasing, as in the conclusion of the proof of Theorem \ref{bdecaythm}, gives the desired uniform stabilization rates. 

Analogously, we can crudely bound $\ti{W} \leq \lp{\ti{W}}{\infty}$ and $f(x,t) \leq C_M b(t)$ to obtain 
\begin{align}
\Sigma(t) &= \inf_{\gamma} \int_0^t \ti{W}(\gamma(s), s) f(\gamma(s),s) ds \\
&\leq \lp{\ti{W}}{\infty} C_M \int_0^t (s+1)^{-\beta} ds = \lp{\ti{W}}{\infty} C_M \begin{cases} 
\frac{1}{1-\beta}\left((t+1)^{1-\beta}-1 \right)& \beta \neq 1\\
\ln(t+1) & \beta =1.
\end{cases}
\end{align}
Plugging this into Theorem \ref{lowerboundthm} gives the desired lower bound. 
\end{proof}

\subsection{Oscillating damping, $0$ for growing length time intervals}
We now consider damping which alternates in $t$ between $0$ and a nontrivial function. We assume the damping is non-trivial for time intervals of a fixed size $L_0$, and is 0 for growing time intervals. We allow the damping to depend on $x$ as well, so long as it satisfies the finite time TGCC on the time-intervals where it is non-trivial. 

To make this exact, fix $L_0$ and suppose $\ti{W}(x,t) \in C^0(M \times [0, L_0])$. Assume that for some $\e>0$, $\{\ti{W} > \e\}$ satisfies Assumption \ref{ftGCC} on $M \times [0,L_0]$. Let $\Cm=\inf_{\gamma} \frac{1}{L_0} \int_0^{L_0} \ti{W}(\gamma(t), t) dt$, and let $f:[0,\infty) \ra [C_1, \infty)$ be an increasing function. This $f$ will define the length of the time intervals where the damping is $0$.

At the $k$th step, we will have $W=\ti{W}$  for a time interval of length $L_0$, and then have $W=0$ for a time interval of length $f(k+1)$. In particular, for $k \in \Nb,$ define $T_k = \sum_{j=1}^k f(j)$ and 
\begin{align}\label{growingdampdef}
&W(x,t) = \begin{cases}
	\ti{W}(x,t) & 0 \leq t \leq L_0 \\
	0 & L_0 < t < L_0+f(1)\\
	\ti{W}(x,t- kL_0-T_k), & kL_0 + T_k  < t < (k+1)L_0 + T_k \\
	0 & (k+1)L_0 + T_k < t < (k+1)L_0 + T_k+f(k+1),
\end{cases}
\end{align}

\begin{figure}[h]
\center
\begin{tikzpicture}
\node at (-0.3, 2) {$W$};
\node at (8, -.3) {$t$};
\draw [->] (0,0) -- (0,2.2);
\draw [->] (0,0) -- (8.4,0);
\node at (0,-.3) {$0$};
\node at (.8,-.4) {$\overbrace{f(1)}$};
\node at (2,-.4) {$\overbrace{f(2)}$};
\node at (4,-.4) {$\overbrace{f(3)}$};
\node at (6.5,-.4) {$\overbrace{f(4)}$};
\node at (.25,2.5) {$\underbrace{L_0}$};
\node at (1.25, 2.5) {$\underbrace{L_0}$};
\node at (2.75, 2.5) {$\underbrace{L_0}$};
\node at (5.25, 2.5) {$\underbrace{L_0}$};
\node at (7.75, 2.5) {$\underbrace{L_0}$};
\draw [-] (0,2)--(.5,2);
\draw[-] (.5,2)--(.5,0);
\draw[-] (.5,0)--(1,0);
\draw[-] (1,2)--(1,0);
\draw[-] (1,2)--(1.5,2);
\draw[-] (1.5,2)--(1.5,0);
\draw[-] (1.5,0)--(2.5,0);
\draw[-](2.5,2)--(2.5,0);
\draw[-](2.5,2)--(3,2);
\draw[-] (3,2)--(3,0);
\draw[-] (3,0)--(5,0);
\draw[-](5,2)--(5,0);
\draw[-](5,2)--(5.5,2);
\draw[-] (5.5,2)--(5.5,0);
\draw[-](5.5,0)--(7.5,0);
\draw[-](7.5,0)--(7.5,2);
\draw[-](7.5,2)--(8,2);
\draw[-](8,2)--(8,0);
\node at (8.4,1) {$\cdots$};
\end{tikzpicture}
\caption{Time intervals where $W(x,t)$ satisfying \eqref{growingdampdef} equals $\ti{W}$ or 0.}
\end{figure}

Because $W$ is on for a fixed time interval $L_0$ and $\ti{W}$ satisfies Assumption \ref{ftGCC} on each of these time intervals, it is straightforward to obtain a damped wave observability inequality from Proposition \ref{uniobserveprop} and Lemma \ref{observeconnection}. Therefore solutions lose a fixed fraction of their energy for each full time interval of length $L_0$ that the damping is non-trivial.
However, Lemma  \ref{observetodecay} cannot be used to obtain a uniform stabilization rate, as the observability estimates do not have the appropriate periodic structure. Instead, given a time $t$ we estimate the number of time intervals of length $L_0$ where the damping has been non-trivial.
This is done by taking the inverse of an integral of $f$, which, as seen in the examples, provides the appropriate qualitative uniform stabilization rate. 
\begin{proposition}\label{growingprop}
Suppose $W$ is as in \eqref{growingdampdef}. Define $F(k)=\int_1^{k+1} f(z) dz$ and let $F^{-1}(t)$ be the inverse of $F$. Then, there exists $C, c>0$ such that for all $u$ solving \eqref{TDWE}
\begin{equation}
	E(u,t) < C E(u,0) \exp\left(-c F^{-1}\left( \frac{C_1 t}{L_0+C_1}\right) \right).
\end{equation}
Furthermore, define $B(k)=\int_0^k f(z) dz$ and let $B^{-1}(t)$ be the inverse of $B$. Then
\begin{equation}
	E(u,t) < C E(u,0) \exp\left(-c B^{-1}(t) \right),
\end{equation}
cannot hold for all $u$ solving \eqref{TDWE} if $c > 2 \Cm L_0$.

In particular, suppose $C_1>0, \alpha \geq 0$ and $r>1$ 
\begin{enumerate}
	\item If $f(j)=C_1 j^{\alpha}$, then there exists $C,c>0,$ such that for all $u$ solving \eqref{TDWE}, 
	\begin{equation}
		E(u,t) < CE(u,0) \exp(-c t^{\frac{1}{\alpha+1}}).
	\end{equation}
	Furthermore, this cannot hold for all $u$ solving \eqref{TDWE} if $c > 2\Cm L_0 \left( \frac{\alpha+1}{C_1} \right)^{\frac{1}{\alpha+1}}$.
	\item If $f(j) = C_1 r^j$, then there exists $C,c>0,$ such that for all $u$ solving \eqref{TDWE}, 
	\begin{equation}
		E(u,t) < C E(u,0) \frac{1}{(t+1)^c}
	\end{equation}
	Furthermore, this cannot hold for all $u$ solving \eqref{TDWE} if $c > \frac{2 \Cm L_0}{\ln(r)}$. 
	\item If $f(j)=C_1 e^{j+e^{j}}$, then there exists $C,c>0,$ such that for all $u$ solving \eqref{TDWE}, 
	\begin{equation}
		E(u,t) < C E(u,0) \frac{1}{\ln(2+t)^{c}}.
	\end{equation}
	Furthermore, this cannot hold for all $u$ solving \eqref{TDWE} if $c>2\Cm L_0$. 
\end{enumerate}
\end{proposition}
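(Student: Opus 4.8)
I would obtain the stated upper bound from a per-window observability estimate combined with the monotonicity of the energy on the ``off'' intervals. Write $s_k = kL_0 + \sum_{j=1}^{k} f(j)$ for the start of the $k$-th on-window (so $s_0 = 0$), so that $W(\cdot,t) = \ti{W}(\cdot, t - s_k)$ for $t \in [s_k, s_k+L_0]$ while $W \equiv 0$ on $[s_k+L_0, s_{k+1}]$. Given $k$, let $\psi$ solve \eqref{WEb} with the same Cauchy data as $u$ at time $s_k$; translating time by $s_k$, the set $\{\ti W > \e\}$ satisfies Assumption~\ref{ftGCC} on $M\times[0,L_0]$ by hypothesis, so Lemma~\ref{observeprop}, whose constant $C_2$ depends only on $\{\ti W>\e\}$, $L_0$ and $M$ and hence is the same for every $k$, together with $\chi_{\{\ti W>\e\}} \le \e^{-1}W$ and Lemma~\ref{observeconnection}, gives
\[ E(u,s_k) \le \frac{C_2}{\e}\bigl(1+2L_0\|W\|_\infty\bigr)^2 \bigl(E(u,s_k) - E(u,s_k+L_0)\bigr), \]
where I used $\p_t E(u,t) = -\int_M W|\p_t u|^2\,dx$. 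Thus $E(u,s_k+L_0) \le \rho\, E(u,s_k)$ for a fixed $\rho\in[0,1)$, and since the energy does not increase on the off-windows, iterating gives $E(u,s_k)\le \rho^{k}E(u,0)$. Because $f$ is increasing and bounded below by $C_1$, one has $s_k \le \tfrac{C_1+L_0}{C_1}F(k)$, so for any $t\ge 0$ the choice $k = \lfloor F^{-1}(C_1 t/(L_0+C_1))\rfloor$ satisfies $s_k\le t$, and then $E(u,t)\le E(u,s_k)\le \rho^{k}E(u,0)\le C E(u,0)\exp\bigl(-cF^{-1}(C_1t/(L_0+C_1))\bigr)$ with $c=-\ln\rho>0$.

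\textbf{Lower bound.} Here I would use the mechanism behind Theorem~\ref{lowerboundthm}, part 2: equation~\eqref{eq:ElowSigma} provides, for every $\delta,T>0$, a solution of \eqref{TDWE} with $E(u,T) > E(u,0)\exp\bigl(-2\Sigma(T)-\delta\bigr)$. If $E(u,t)<CE(u,0)\exp(-cB^{-1}(t))$ held for all solutions, this would force $cB^{-1}(T) < 2\Sigma(T)+\delta+\ln C$ for every $T$, so it is enough to show $\Sigma(T)\le(\Cm L_0 + o(1))B^{-1}(T)$ as $T\to\infty$: then for $c>2\Cm L_0$ and $T$ large the left-hand side dominates, a contradiction. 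To bound $\Sigma(T)$ from above I would take a unit-speed geodesic that is nearly optimal for the single-window functional $\gamma\mapsto\int_0^{L_0}\ti W(\gamma(\tau),\tau)\,d\tau$ and note that the number of on-windows meeting $[0,T]$ is at most $B^{-1}(T)+O(1)$, since $s_k\ge\sum_{j=1}^{k}f(j)\ge\int_0^k f = B(k)$. Keeping the damping accumulated on \emph{each} window close to the minimum $\Cm L_0$ — rather than only to $\|\ti W\|_\infty L_0$ — is what yields the sharp constant $2\Cm L_0$; this is the delicate point of the proof, and in the explicit examples below it is simplest to verify the $\Sigma$-asymptotics directly.

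\textbf{Explicit cases.} These follow by evaluating $B$ and $F$ in closed form and inverting. For $f(j)=C_1 j^{\alpha}$ one gets $B(k)=\tfrac{C_1}{\alpha+1}k^{\alpha+1}$, with $F(k)$ of the same leading order, so $B^{-1}(t)\sim\bigl(\tfrac{(\alpha+1)t}{C_1}\bigr)^{1/(\alpha+1)}$ and both bounds take the form $\exp(-c\,t^{1/(\alpha+1)})$, the threshold $c>2\Cm L_0\bigl(\tfrac{\alpha+1}{C_1}\bigr)^{1/(\alpha+1)}$ arising from $cB^{-1}(t)$. For $f(j)=C_1 r^{j}$ one gets $B(k)=C_1\tfrac{r^{k}-1}{\ln r}$, so $B^{-1}(t)\sim\tfrac{\ln t}{\ln r}$ and the bounds read $(t+1)^{-c}$, with threshold $c>2\Cm L_0/\ln r$. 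For $f(j)=C_1 e^{j+e^{j}}$, the substitution $w=e^{z}$ gives $B(k)=C_1(e^{e^{k}}-e)$, so $B^{-1}(t)\sim\ln\ln t$ and the bounds read $(\ln(2+t))^{-c}$, with threshold $c>2\Cm L_0$; the corresponding upper bounds follow identically from $F^{-1}(C_1t/(L_0+C_1))$, which has the same leading asymptotics. The main obstacle is the lower-bound step just described — establishing $\Sigma(T)\le(\Cm L_0+o(1))B^{-1}(T)$ with the optimal constant; by contrast the uniformity of the observability constant across windows is automatic, because all on-windows are exact time-translates of one another.
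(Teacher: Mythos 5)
Your upper-bound argument is essentially the paper's: a per-window observability estimate, uniform in the window because each on-window is an exact time-translate of $\ti{W}$, then Lemma \ref{observeconnection}, the dissipation identity $\p_t E(u,t)=-\int_\Omega W|\p_t u|^2dx$, a fixed fractional energy loss per on-window, and the counting estimate $s_k\le \frac{C_1+L_0}{C_1}F(k)$, which is the same bound the paper uses to compare the number of completed on-windows with $F^{-1}\left(\frac{C_1 t}{L_0+C_1}\right)$. Your use of Lemma \ref{observeprop} with $Q=\{\ti{W}>\e\}$ and $\chi_Q\le \e^{-1}W$ is an acceptable, arguably more explicit, substitute for the paper's appeal to Proposition \ref{uniobserveprop} on each window. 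The closed-form computations of $F$, $B$ and their inverses in the three examples also agree with the paper.

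The genuine gap is in the lower bound. You reduce everything to the estimate $\Sigma(T)\le(\Cm L_0+o(1))B^{-1}(T)$, call it ``the delicate point,'' and never establish it. The mechanism you sketch --- pick a geodesic nearly optimal for the single-window functional $\gamma\mapsto\int_0^{L_0}\ti{W}(\gamma(\tau),\tau)d\tau$ --- does not by itself produce an admissible competitor for $\Sigma$: the infimum defining $\Sigma(T)$ is over a single geodesic followed through all windows, and its restriction to the $k$-th on-window is dictated by the flow, so a geodesic chosen to be nearly optimal in window $0$ may accumulate as much as $\|\ti{W}\|_\infty L_0$ in later windows; what this argument gives unconditionally is only $\Sigma(T)\le \|\ti{W}\|_\infty L_0\,(B^{-1}(T)+O(1))$, i.e.\ a strictly weaker threshold than the stated $2\Cm L_0$, and correspondingly weaker constants in all three examples. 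Moreover the promised ``direct verification of the $\Sigma$-asymptotics in the examples'' never appears and cannot be done from the examples alone, since they only specify $f$ while the constant multiplying $B^{-1}(T)$ depends on $\ti{W}$ and the geodesic flow. The paper's proof runs along exactly the lines you intend but commits to this step: it first bounds the number $N(t)$ of on-windows before time $t$ by $\lceil B^{-1}(t)\rceil+2$, using that $f$ is increasing so $\sum_{j=1}^{J-1}f(j)\ge\int_0^{J-1}f(z)dz=B(J-1)$, and then charges each on-window the single-window infimum via the displayed inequality $\Sigma(t)\le (N(t)+1)\inf_\gamma\int_0^{L_0}\ti{W}(\gamma(s),s)ds=\Cm L_0(N(t)+1)\le \Cm L_0(B^{-1}(t)+3)$, before invoking Theorem \ref{lowerboundthm}; the examples then need no separate analysis of $\Sigma$. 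To complete your proposal you must either prove that per-window charge of $\Cm L_0$ --- precisely the point you flagged --- or quote the paper's inequality; as written, the second assertion of the proposition and all three ``cannot hold'' thresholds are unsupported.
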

\begin{remark}
	\begin{enumerate}
		\item Note that when $f$ is strictly increasing, the damping is $0$ for a growing proportion of times, and so the time dependent geometric control condition cannot be satisfied. 
		\item The integrals defining $F(k)$ and $B(k)$ should be understood as sums like $\sum_{j=1}^{k+1} f(j)$, but with intermediate values filled in, so $k$ can be taken a real number. Furthermore, because we invert $F$ and $B$ we construct them to be strictly increasing. This is why we do not use, for instance, $\sum_{j=1}^{\lfloor k\rfloor +1}  f(j).$
\end{enumerate}
\end{remark}
\begin{proof}
First note that because $f \geq C_1>0$, $F$ and $B$ are strictly increasing, and so $F^{-1}$ and $B^{-1}$ exist.

1) Now given a time $t>0$, let $N(t)$ be the number of times the damping has been non-trivial for a time interval of length $L_0$ before time $t$. We will first bound $N(t)$ from below in terms of $F^{-1}$, then show that solutions lose a fixed fraction of their energy each time the damping is nontrivial for a full time-interval of length $L_0$. We conclude by combining these two statements to obtain a uniform stabilization rate.

Note that by its definition $N(t)$ is the largest $N$ such that 
\begin{equation}\label{eq:NTlargest}
	N L_0 + \sum_{j=1}^{N-1} f(j) \leq t.
\end{equation}
Now let $K=\lfloor F^{-1}( \frac{C_1 t}{L_0+C_1}) \rfloor$ and we claim that $K \leq N(t)$. To see this, first note that since $f$ is increasing, $f(a) \leq \int_a^{a+1} f(x) dx$. Therefore
\begin{equation}\label{eq:KFineq1}
	K L_0 + \sum_{j=1}^{K-1} f(j) \leq KL_0 + \int_1^K f(z) dz = K L_0 + F(K-1).
\end{equation}
Now, since $C_1 \leq f$, $C_1 K \leq F(K)$. Therefore $K L_0 \leq C_1^{-1} L_0 F(K)$. Note also, since $F$ is increasing $F(K-1)\leq F(K)$. Plugging these back into \eqref{eq:KFineq1} we obtain
\begin{equation}
	KL_0 + \sum_{j=1}^{K-1} f(j) \leq \left( \frac{L_0}{C_1} +1 \right) F(K).
\end{equation}
Now since $K \leq F^{-1}(\frac{C_1 t}{L_0+C_1})$ and $F$ is increasing, $F(K)\leq \frac{C_1 t}{L_0+C_1}$ and so 
\begin{equation}
	KL_0 + \sum_{j=1}^{K-1} f(j) \leq \left( \frac{L_0}{C_1}+1 \right) \frac{C_1 t}{L_0+C_1} = t.
\end{equation}
So $K$ also satisfies \eqref{eq:NTlargest}. Since $N(t)$ was the largest value for which that inequality holds we indeed have $K=\lfloor F^{-1}( \frac{C_1 t}{L_0+C_1}) \rfloor \leq N(t)$. Now since $K$ and $N(t)$ are both integers, we have 
\begin{equation}
	F^{-1}\left(\frac{C_1 t}{L_0+C_1}\right) \leq N(t)+1.
\end{equation}
Now we will show that every solution of \eqref{TDWE} loses a fixed fraction of its energy over each time interval of length $L_0$ where $W=\ti{W}$. So let $u$ be a solution of \eqref{TDWE} and let $\psi$ solve \eqref{WEb} with 
\begin{equation}
	(\psi, \p_t \psi)=(u,\p_t u) \text{ at } t= kL_0+T_k.
\end{equation} 
Since $W$ satisfies Assumption \ref{ftGCC} on the time interval $[kL_0+T_k, (k+1)L_0+T_k]$, by Proposition \ref{uniobserveprop}, there exists $C>0$, uniform in $k$, such that 
$$
E\left(u,kL_0 + T_k \right)= E\left(\psi, kL_0+T_k\right) \leq C \int_{kL_0+T_k}^{(k+1)L_0 + T_k} \int_{\Omega} W |\p_t \psi|^2 dx dt. 
$$
Then by Lemma \ref{observeconnection}, there exists $C>0$, uniform in $k,$ such that 
\begin{equation}\label{onoff1}
E\left(u,kL_0+T_k \right) \leq C \int_{kL_0+T_k}^{(k+1) L_0+ T_k} \int_{\Omega} W |\p_t u|^2 dx dt. 
\end{equation}
Now integrating $\p_t E(u,t) = -\int_M W|\p_t u(x,t)|^2 dx$ in $t$ from $kL_0+T_k$ to $(k+1)L_0 + T_k$ 
\begin{equation}\label{onoff2}
E\left(u,(k+1)L_0 + T_k \right) - E\left(u,kL_0 + T_k\right) = - \int_{kL_0+T_k}^{(k+1) L_0 +T_k} \int_{\Omega} W|\p_t u|^2 dx dt.
\end{equation}
Combining \eqref{onoff1} and \eqref{onoff2}, there exists $C^*>0$, independent of $k$, such that 
\begin{equation}\label{eq:energyL0}
E\left(u,(k+1)L_0 + T_k\right) < (1-C^*) E\left(u,kL_0+T_k\right).
\end{equation} 
So indeed from time $kL_0+T_k$ to $(k+1)L_0+T_k$, where $W=\ti{W}$, every solution loses a fixed fraction of its energy. Now we will combine our estimate of $N(t)$ with this fact to obtain a uniform stabilization rate. 

At time $t$, there have been $N(t)$ intervals of length $L_0$ where $W=\ti{W}$, and so 
\begin{equation}
E(u,t) < (1-C^*)^{N(t)} E(u,0).
\end{equation} 
Now since $N(t) \geq F^{-1}\left(\frac{C_1t}{L_0+C_1}\right)-1$, after manipulating the exponential directly, there exists $C,c>0,$ such that 
\begin{equation}
E(u,t) < C \exp\left(-c F^{-1}\left(\frac{C_1 t}{L_0+C_1 } \right) \right) E(u,0). 
\end{equation} 
2) To see the lower bound on the uniform stabilization rate we bound $N(t)$ from above using $B^{-1}(t)$, then estimate $\Sigma(t)$ in terms of $N(t)$, and finally apply Theorem \ref{lowerboundthm}.

We claim that $J=\lceil B^{-1}(t) \rceil +1 \geq N(t)$. To see this, note that for $a \in \Rb$, since $f$ is increasing $f(a) \geq \int_{a-1}^a f(x) dx$. Now note that $B(t)$ is increasing and $J-1 \geq B^{-1}(t)$, so $B(J-1) \geq t$. Combining these together with the fact that $JL_0 \geq 0$, we compute
\begin{equation}
J L_0 + \sum_{j=1}^{J-1} f(j) \geq J L_0 + \int_0^{J-1} f(z) dz \geq B(J-1) \geq t. 
\end{equation} 
Since $J$ does not satisfy \eqref{eq:NTlargest} we must have $J \geq N(t)$.
Therefore
\begin{align}
\Sigma(t) &= \inf_{\gamma} \int_0^t W(\gamma(s), s) ds \\
&\leq (N(t)+1) \inf_{\gamma} \int_0^{L_0} \ti{W}(\gamma(s),s) ds\\
&\leq (J+1) \inf_{\gamma} \int_0^{L_0} \ti{W}(\gamma(s), s) ds \\
&= \Cm L_0 (J+1) \leq \Cm L_0(B^{-1}(t)+3).
\end{align} 
Note the final inequality follows from the definition of $J$. Now applying Theorem \ref{lowerboundthm} gives the second conclusion, where we note that $3\Cm L_0$ can be absorbed into the constant $C$ in $E(u,t) \leq CE(u,0) \exp(-\beta \Sigma(t))$.

To see the examples, we directly compute $F, B$ and their inverses, then apply the first two parts of this proposition, using elementary exponential rules to consolidate constants. 

3. When $f(j) = C_1 j^{\alpha}$, then 
\begin{equation}
	F(k) = C_1 \left( \frac{(k+1)^{\alpha+1}-1}{\alpha+1}\right), \text{ so }F^{-1}(s) = \left( \frac{(\alpha+1)s}{C_1}+1\right)^{\frac{1}{\alpha+1}}-1. 
\end{equation}

On the other hand, 
\begin{equation}
	B(k) = \frac{C_1 k^{\alpha+1}}{\alpha+1}, \text{ so } B^{-1}(s) = \left( \frac{(\alpha+1)s}{C_1}\right)^{\frac{1}{\alpha+1}}.
\end{equation} 

4. When $f(j)=C_1 r^j$, then 
\begin{equation}
	F(k)=\frac{ C_1(r^{k+1}-r) }{\ln(r)} , \text{ so } F^{-1}(s) = \frac{1}{\ln(r)}\ln\left( \frac{\ln(r)}{C_1} s+ r\right)-1
\end{equation} 
On the other hand, 
\begin{equation}
	B(k) = \frac{C_1 (r^k-1)}{\ln(r)}, \text{ so } B^{-1}(s) = \frac{1}{\ln(r)}\ln\left( \frac{\ln(r)}{C_1} s+ 1\right).
\end{equation}

5. When $f(j)= C_1 e^{j+e^j}$, then 
\begin{equation}
	F(k) = C_1(e^{e^{k+1}}-e^e), \text{ so } F^{-1}(s)= \ln\left(\ln\left( \frac{s}{C_1} + e^e\right)\right)-1.
\end{equation}
On the other hand, 
\begin{equation}
	B(k)=C_1(e^{e^k}-e), \text{ so } B^{-1}(s)= \ln\left(\ln\left(\frac{s}{C_1} + e\right)\right).
\end{equation} 

As stated above, in each of these cases, applying parts 1) and 2) of this proposition gives the desired conclusion.
\end{proof}
\subsection{Oscillating Damping, non-trivial for shrinking time intervals}
We now consider another example of damping which alternates in $t$ between $0$ and a non-trivial function. We now consider damping which is non-trivial on shrinking time intervals and $0$ on fixed size time intervals. We allow the damping to depend on $x$, so long as the $x$ dependent piece is bounded from below by a positive constant. 

Consider $\chi \in L^{\infty}([0,1], [0,1])$ with $\chi \equiv 1$ on $[\frac{1}{4}, \frac{3}{4}]$. 
Fix $S_0>0.$ Consider $f: \Nb_0 \ra (0,S_0]$, with $C_m, C_M, \beta>0$ such that $C_m (t+1)^{-\beta} \leq f(t) \leq C_M (t+1)^{-\beta}$. Consider $g \in L^{\infty}(M)$ with $c_w, C_W>0$ such that $c_w < g(x) < C_W$.

For $k \in \Nb_0$, define 
\begin{equation}\label{shrinkingdampdef}
W(x,t) = \begin{cases}
g(x) \chi\left( \frac{t-k S_0}{f(k)} \right) & kS_0 < t < kS_0 +f(k),\\
0 & kS_0 + f(k) \leq t \leq (k+1) S_0.
\end{cases}
\end{equation}
That is, the damping is on for an interval of length at most $f(k)$ starting at $kS_0$ and then is off until $(k+1)S_0$. 
\begin{figure}[h]
\center
\begin{tikzpicture}
\node at (-0.3, 2) {$W$};
\node at (8, -.3) {$t$};
\draw [->] (0,0) -- (0,2.2);
\draw [->] (0,0) -- (8.4,0);
\node at (0,-.3) {$0$};
\node at (2,-.3) {$S_0$};
\node at (4,-.3) {$2S_0$};
\node at (6,-.3) {$3S_0$};
\draw [-] (0,2)--(1,2);
\draw[-] (1,2)--(1,0);
\draw[-] (1,0)--(2,0);
\draw[-] (2,2)--(2,0);
\draw[-] (2,2)--(2.66,2);
\draw[-] (2.66,2)--(2.66,0);
\draw[-] (2.66,0)--(4,0);
\draw[-](4,2)--(4,0);
\draw[-](4,2)--(4.4,2);
\draw[-] (4.4,2)--(4.4,0);
\draw[-] (4.4,0)--(6,0);
\draw[-](6,2)--(6,0);
\draw[-](6,2)--(6.2,2);
\draw[-] (6.2,2)--(6.2,0);
\draw[-](6.2,0)--(8,0);
\node at (.5, 2.5) {$\underbrace{f(0)}$};
\node at (2.35, 2.5) {$\underbrace{f(1)}$};
\node at (4.2, 2.5) {$\underbrace{f(2)}$};
\node at (6.1, 2.5) {$\underbrace{f(3)}$};
\node at (8,1) {$\cdots$};
\end{tikzpicture}
\caption{Time intervals where $W$ satisfying \eqref{shrinkingdampdef} is non-trivial or 0.}
\end{figure}

\begin{proposition}\label{shrinkingdecay}
With damping as in \eqref{shrinkingdampdef},
\begin{enumerate}
	\item If $0 \leq \beta<1/3$, there exists $C,c>0$, such that for all $u$ solving \eqref{TDWE}
	\begin{equation}
		E(u,t) < C E(u,0) \exp(-c t^{1-3\beta}).
	\end{equation}
	\item If $\beta=1/3$, there exists $C,c>0$, such that for all $u$ solving \eqref{TDWE}
	\begin{equation}
		E(u,t) \leq C(t+1)^{-c} E(u,0).
	\end{equation}
	\item If $\beta>1/3$, there exists $c<1$, such that for $t>f(0),$ for all $u$ solving \eqref{TDWE}
	\begin{equation}
		E(u,t) <c E(u,0).
	\end{equation}
\end{enumerate}
If in addition $\chi \in C^0([0,1])$, and $g \in C^0(M)$ then
\begin{enumerate}
	\item If $0\leq \beta<1$, then 
	\begin{equation}
		E(u,t) < C E(u,0) \exp(-c t^{1-\beta}),
	\end{equation} 
	cannot hold for all $u$ solving \eqref{TDWE} if  $c> \frac{2C_M C_W}{(1-\beta)S_0^{1-\beta}}$.
	\item If $\beta=1$, then
	\begin{equation}
		E(u,t) < C E(u,0) \frac{1}{t^{c}} E(u,0),
	\end{equation} 
	cannot hold for all $u$ solving \eqref{TDWE} if $c>2C_M C_W$.
	\item If $\beta>1$, then there is not a uniform stabilization rate.
\end{enumerate}
\end{proposition}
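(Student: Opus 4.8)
The plan is to prove the upper and lower bounds separately, following the strategy of Section \ref{s:bdecaythm} for the former and that of the proof of Theorem \ref{lowerboundthm} for the latter. Throughout, write $k=\lfloor t/S_0\rfloor$, so that exactly $k$ full ``on'' intervals have been completed by time $t$.

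\emph{Upper bounds.} The key observation is that on the middle half $I_k:=[kS_0+f(k)/4,\,kS_0+3f(k)/4]$ of the $k$-th on interval one has $\chi((t-kS_0)/f(k))=1$, hence $W(x,t)=g(x)\ge c_w$ for \emph{every} $x\in M$; thus during $I_k$ the damping observes all of space. Fix $k$ and let $\psi$ solve \eqref{WEb} with the same data as $u$ at time $kS_0$, so $E(\psi,t)=E(u,kS_0)$ for all $t$. Since $|I_k|=f(k)/2\le S_0/2$, the short-time observability inequality for the wave equation with observation over the whole manifold (Appendix B) gives $E(u,kS_0)\le Cf(k)^{-3}\int_{I_k}\int_M|\p_t\psi|^2\,dx\,dt$ with $C$ depending only on $M$. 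Using $W\ge c_w$ on $I_k$, enlarging the time window to $[kS_0,(k+1)S_0]$ (where $W\ge0$), and passing from $\psi$ to $u$ by Lemma \ref{observeconnection} (after translating time by $kS_0$; the constant is uniform in $k$ because $\|W\|_\infty\le C_W$) yields
\begin{equation}
E(u,kS_0)\le \frac{C'}{f(k)^3}\int_{kS_0}^{(k+1)S_0}\int_M W\,|\p_t u|^2\,dx\,dt
\end{equation}
with $C'$ independent of $k$. Combining this with $E(u,(k+1)S_0)-E(u,kS_0)=-\int_{kS_0}^{(k+1)S_0}\int_M W|\p_t u|^2\,dx\,dt$ gives $E(u,(k+1)S_0)\le(1-c\,f(k)^3)_+E(u,kS_0)$ for a fixed $c>0$. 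Iterating, taking logarithms, using $\ln(1-x)\le-x$ and $f(j)\ge C_m(j+1)^{-\beta}$, one obtains $E(u,kS_0)\le E(u,0)\exp\big(-c'\sum_{j=0}^{k-1}(j+1)^{-3\beta}\big)$; since $t\simeq kS_0$, comparing the sum with an integral gives the three stated cases according as $3\beta<1$, $3\beta=1$ or $3\beta>1$ (in the last case the $j=0$ term alone shows a definite fraction of the energy is lost by time $f(0)$, and monotonicity of the energy finishes it).

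\emph{Lower bounds.} Assume additionally $\chi\in C^0$ and $g\in C^0$, so $W\in C^0(M\times[0,\infty))$ and Theorem \ref{lowerboundthm} applies; the idea is to bound $\Sigma(t)$ from above and invoke that theorem. For any unit-speed geodesic $\gamma$ we have $W(\gamma(s),s)\le C_W$ when $s$ lies in an on interval and $W(\gamma(s),s)=0$ otherwise, and the on intervals completed by time $t$ have total length $\sum_{j=0}^{k}f(j)\le C_M\sum_{j=0}^{k}(j+1)^{-\beta}$; taking the infimum over $\gamma$ gives $\Sigma(t)\le C_WC_M\sum_{j=0}^{k}(j+1)^{-\beta}$ (and, similarly, $\Sigma(t)\ge\tfrac{c_wC_m}{2}\sum_{j=0}^{k-1}(j+1)^{-\beta}$, so $\Sigma(t)\to\infty$ when $\beta\le1$). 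For $\beta<1$ the upper bound is $\le\frac{C_MC_W}{(1-\beta)S_0^{1-\beta}}t^{1-\beta}(1+o(1))$, so a bound $E(u,t)\le CE(u,0)\exp(-ct^{1-\beta})$ for all $u$ would give $E(u,t)\le C'E(u,0)\exp(-\beta'\Sigma(t))$ with $\beta'$ arbitrarily close to $\frac{c(1-\beta)S_0^{1-\beta}}{C_MC_W}$, and part 2 of Theorem \ref{lowerboundthm} forces $\beta'\le2$, i.e. $c\le\frac{2C_MC_W}{(1-\beta)S_0^{1-\beta}}$; for $\beta=1$, $\Sigma(t)\le C_WC_M\ln t+O(1)$ and the same reasoning applied to a rate $(t+1)^{-c}=\exp(-c\ln t)$ gives $c\le2C_MC_W$; for $\beta>1$ the series $\sum_j(j+1)^{-\beta}$ converges, $\Sigma(t)$ is bounded, and part 3 of Theorem \ref{lowerboundthm} rules out any uniform stabilization rate.

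\emph{Main obstacle.} The essential difficulty is that the on intervals have length $f(k)\to0$, so Proposition \ref{uniobserveprop}, which needs an observation window of fixed length $T_0$, does not apply with a constant uniform in $k$. One must instead use that the observation region fills all of $M$ during each on interval and quantify the short-time, whole-manifold observability inequality, whose constant degrades like $\tau^{-3}$ in the window length $\tau$. Propagating this power of $3$ through the iteration is exactly what produces the exponent $1-3\beta$ and, in particular, what leaves a genuine gap between upper and lower bounds for $\beta\in(1/3,1)$; I expect this bookkeeping --- together with verifying that the Appendix B inequality has the clean form used above --- to be the main technical work, the rest being routine.
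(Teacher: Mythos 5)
Your proposal is correct and follows essentially the same route as the paper: the upper bounds come from the short-time whole-manifold observability inequality of Appendix B applied on the middle half of each on-interval (with the $f(k)^{-3}$ loss), transferred to the damped equation via Lemma \ref{observeconnection} and iterated exactly as in Lemmas \ref{shrinkingdwobserve} and \ref{observetodecay}, while the lower bounds come from bounding $\Sigma(t)$ by $C_MC_W\sum_j(j+1)^{-\beta}$ and invoking parts 2 and 3 of Theorem \ref{lowerboundthm}. Your only deviations are cosmetic (you redo the iteration of Lemma \ref{observetodecay} inline and spell out the comparison between $\exp(-ct^{1-\beta})$ and $\exp(-2\Sigma(t))$, including the check that $\Sigma(t)\to\infty$ for $\beta\le 1$), and your identification of the $\tau^{-3}$ short-time constant as the source of the $1-3\beta$ exponent and of the gap for $\beta\in(1/3,1)$ matches the paper's own discussion.
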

\begin{remark}
	\begin{enumerate}
		\item Note that when $\beta>0$ the damping is 0 for a growing proportion of time, and so the time dependent geometric control condition cannot be satisfied. 
		\item At first the hypothesis on $g(x)$ seems stricter than our spatial hypotheses for the other examples. However,  our assumption on $g(x)$ is equivalent to satisfying Assumption \ref{TGCC} on $[0,T_0]$ for all $T_0>0$. We require this in order to obtain an observability estimate on all of the shrinking intervals where $W$ is non-trivial.
	\end{enumerate}
\end{remark}
This result will follow from a proof analogous to Theorem \ref{bdecaythm}. In particular, a damped wave observability inequality is proved from a wave equation observability inequality on short time intervals $[0,\d]$, Lemma \ref{basicwaveobserve}, and is then converted into a uniform stabilization rate using Lemma \ref{observetodecay}. 

One notable change is that the wave equation observability inequality on short time intervals $[0,\d]$ has a constant that grows like $\d^{-3}$, and as pointed out in the proof of Lemma \ref{basicwaveobserve}, the power on $\d$ cannot be improved. This produces the power on $f(k)$ in the Lemma below, compare to Lemma \ref{decreasingdwobserve}. This is what causes the gap between the upper and lower bounds in the statement of the proposition. 

We begin with a damped wave observability inequality. 
\begin{lemma}\label{shrinkingdwobserve}
Suppose $u$ solves \eqref{TDWE} with damping as in \eqref{shrinkingdampdef}. Then, there exists $C>0$, such that for all $k \in \Nb_0$
\begin{equation}
	C f(k)^3 E(u,kT_0) \leq \int_{kS_0}^{(k+1)S_0} \int_M W|\p_t u|^2 dx dt.
\end{equation}
\end{lemma}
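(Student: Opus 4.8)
The plan is to follow the template of the proof of Lemma~\ref{decreasingdwobserve}: build a wave observability inequality for a comparison solution $\psi$, upgrade it to a damped‑wave observability inequality via Lemma~\ref{observeconnection}, and track constants. The difference is that on the window $[kS_0,(k+1)S_0]$ the damping is supported only on an interval of length $f(k)$, which shrinks as $k\to\infty$, so Assumption~\ref{TGCCp} and Proposition~\ref{uniobserveprop} are unavailable. Instead I would use the short‑time wave observability inequality over the whole manifold, Lemma~\ref{basicwaveobserve}, whose observation window is all of $M$ but whose constant degenerates like $\delta^{-3}$ as the window length $\delta\to 0$.

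First I would isolate the sub‑interval on which the damping is comparable to a positive constant. Since $\chi\equiv 1$ on $[\tfrac14,\tfrac34]$ and $g(x)>c_w$, on $I_k:=[kS_0+\tfrac14 f(k),\,kS_0+\tfrac34 f(k)]$ we have $W(x,t)=g(x)\geq c_w>0$; moreover $I_k$ has length $\delta_k:=\tfrac12 f(k)\in(0,\tfrac12 S_0]$ and $I_k\subset[kS_0,(k+1)S_0]$ because $f(k)\leq S_0$. Let $\psi$ solve \eqref{WEb} with $(\psi,\p_t\psi)|_{t=kS_0}=(u,\p_t u)|_{t=kS_0}$. Because the energy of $\psi$ is conserved, $E(\psi,\cdot)\equiv E(u,kS_0)$, and applying Lemma~\ref{basicwaveobserve} on $I_k$ (a window of length $\delta_k\leq S_0/2$) gives a constant $C_0$, uniform over $\delta_k\in(0,S_0/2]$, with
\[
E(u,kS_0)\;\leq\;\frac{C_0}{\delta_k^{3}}\int_{I_k}\!\int_M |\p_t\psi|^2\,dx\,dt\;\leq\;\frac{C_0}{c_w\,\delta_k^{3}}\int_{I_k}\!\int_M W|\p_t\psi|^2\,dx\,dt\;\leq\;\frac{C_0}{c_w\,\delta_k^{3}}\int_{kS_0}^{(k+1)S_0}\!\int_M W|\p_t\psi|^2\,dx\,dt,
\]
where the middle step uses $|\p_t\psi|^2\leq c_w^{-1}W|\p_t\psi|^2$ on $I_k$ and the last step uses $W\geq 0$.

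Next I would pass from $\psi$ to $u$. Since $u$ and $\psi$ agree at $t=kS_0$ and $\|W\|_{L^\infty}\leq C_W$ (because $W=g\chi\leq C_W$), Lemma~\ref{observeconnection} with $t_0=kS_0$, $T=S_0$ gives
\[
\int_{kS_0}^{(k+1)S_0}\!\int_M W|\p_t\psi|^2\,dx\,dt\;\leq\;(1+2S_0C_W)^2\int_{kS_0}^{(k+1)S_0}\!\int_M W|\p_t u|^2\,dx\,dt.
\]
Combining the last two displays and substituting $\delta_k=\tfrac12 f(k)$, so $\delta_k^{-3}=8 f(k)^{-3}$, yields
\[
E(u,kS_0)\;\leq\;\frac{8C_0(1+2S_0C_W)^2}{c_w}\,\frac{1}{f(k)^{3}}\int_{kS_0}^{(k+1)S_0}\!\int_M W|\p_t u|^2\,dx\,dt,
\]
which is the claimed inequality with $C=c_w\big/\!\big(8C_0(1+2S_0C_W)^2\big)$, valid for every $k\in\Nb_0$.

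The one genuinely nontrivial ingredient is the $\delta^{-3}$ blow‑up of the short‑time whole‑manifold wave observability constant in Lemma~\ref{basicwaveobserve}. This is exactly what produces the factor $f(k)^3$ (rather than $f(k)$, as one might naively hope by analogy with Lemma~\ref{decreasingdwobserve}), and, as noted after the statement of Lemma~\ref{basicwaveobserve}, the power cannot be improved; this is the source of the mismatch between the upper and lower bounds in Proposition~\ref{shrinkingdecay}. The remaining points are routine: one only needs the window length $\delta_k$ to stay in a fixed bounded range so that $C_0$ is uniform in $k$, which holds because $f(k)\leq S_0$, and one needs $W$ bounded, which holds with $\|W\|_{L^\infty}\leq C_W$.
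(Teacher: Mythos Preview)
Your proof is correct and is essentially identical to the paper's own argument: both introduce the wave solution $\psi$ matching $u$ at $t=kS_0$, apply the short-time whole-manifold observability inequality (Lemma~\ref{basicwaveobserve}) on the sub-interval $[kS_0+\tfrac14 f(k),\,kS_0+\tfrac34 f(k)]$ of length $\tfrac12 f(k)$, use $W\geq c_w$ there to insert $W$, enlarge to $[kS_0,(k+1)S_0]$, and then invoke Lemma~\ref{observeconnection} to pass from $\psi$ to $u$. Your tracking of the constants and the remark on the origin of the $f(k)^3$ factor are also in line with the paper.
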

\begin{proof}
Let $\psi_k$ solve \eqref{WEb} with $(\psi_k, \p_t \psi_k)_{t=kS_0}=(u,\p_t u)|_{t=kS_0}$, so $E(u,kS_0) = E(\psi, kS_0)$. 
By the short time wave equation observability inequality, Lemma \ref{basicwaveobserve}, there exists $C>0$, such that for any $\d>0, s>0$
\begin{equation}
 E(\psi_k, kS_0+s) \leq \frac{C}{\d^3}\int_{kS_0+s}^{kS_0+\d+s} \int_M |\p_t \psi_k|^2 dx dt. 
\end{equation}
Now since $\psi_k$ solves the wave equation $E(\psi_k, kS_0)=E(\psi_k, kS_0+s)$. Combining this with $g(x) \geq c_W$ we obtain
\begin{equation}
	E(\psi_k, kS_0) \leq \frac{C}{c_W \d^3} \int_{kS_0 +s}^{kS_0+\d+s} \int_M g(x) |\p_t \psi_k|^2 dx dt.
\end{equation}
Then, letting $\d=\frac{f(k)}{2}, s=\frac{f(k)}{4}$, and using that $W(x,t) \geq g(x)$ on $[kS_0+\frac{f(k)}{4}, kS_0+\frac{3f(k)}{4}]$ we have 
\begin{align}
E(u,kS_0) = E(\psi_k, kS_0)  &\leq \frac{8C}{f(k)^3} \int_{kS_0+\frac{f(k)}{4}}^{kS_0+\frac{3f(k)}{4}} \int_M g(x) |\p_t \psi_k|^2 dx dt
\\ 
&\leq \frac{C}{f(k)^3} \int_{kS_0+\frac{f(k)}{4}}^{kS_0+\frac{3f(k)}{4}} W|\p_t \psi_k|^2 dx dt \\
&\leq \frac{C}{f(k)^3} \int_{kS_0}^{(k+1)S_0} \int_M W |\p_t \psi_k|^2 dx dt.
\end{align}
Now applying Lemma \ref{observeconnection} 
\begin{align}
E(u,kS_0) \leq \frac{C(1+2S_0 C_W)^2}{f(k)^3} \int_{kS_0}^{(k+1)S_0} \int_M W |\p_t u|^2 dx dt.
\end{align}
Multiplying both sides by $\frac{f(k)^3}{C(1+2S_0 C_W)^2}$ gives the desired conclusion. 
\end{proof}

We now combine the damped wave observability inequality of Lemma \ref{shrinkingdwobserve} with Lemma \ref{observetodecay} to obtain uniform stabilization rates. To obtain lower bounds, we estimate $\Sigma(t)$ and apply Theorem \ref{lowerboundthm}.
\begin{proof}[Proof of Proposition \ref{shrinkingdecay}]
Since $C_m (1+t)^{-\beta} \leq f(t)$, then by Lemma \ref{shrinkingdwobserve}, there exists $C>0$ such that 
$$
C \frac{1}{(1+jS_0)^{3\beta}} E(u,jS_0) \leq \int_{jS_0}^{(j+1)S_0} \int_M W |\p_t u|^2 dx dt. 
$$
Then by Lemma \ref{observetodecay}, letting $B(k) = \sum_{j=0}^{k-1} C (1+jS_0)^{-3\beta}$,
$$
E(u,kS_0) \leq E(u,0) \exp(-B(k)).
$$
Now since $(1+zS_0)^{-3\beta}$ is decreasing in $z$, $(1+aS_0)^{-3\beta} \geq \int_a^{a+1} (1+zS_0)^{-3\beta}dz$. Therefore
\begin{align}
B(k) \geq \int_0^k C (1+ z S_0)^{-3\beta} dz = \begin{cases}
\frac{C}{1-3\beta} ((1+kS_0)^{1-3\beta} -1) & \beta \neq 1/3 \\
C \ln(1+kS_0) & \beta = 1/3. 
\end{cases}
\end{align}
Then for $k$ large enough, there exists $c>0$ such that 
\begin{align}
B(k) \geq \begin{cases}
c(1+(k+1)S_0)^{1-3\beta} & \beta < 1/3 \\
c \ln(1+(k+1)S_0) & \beta =1/3 \\
c & \beta >1/3.
\end{cases}
\end{align}
Since $E(u,t)$ is non-increasing in $t$, for $t \in [kS_0, (k+1)S_0]$, there exists $C>0$ such that  
\begin{align}
E(u,t) \leq E(u,kS_0) &\leq E(u,0) \begin{cases} C \exp(-c (1+(k+1)S_0)^{1-3\beta}) & \beta < 1/3 \\ 
C \exp(-c\ln(1+(k+1)S_0)) & \beta =1/3 \\
\exp(-c) & \beta > 1/3 \end{cases} \\
& \leq E(u,0) \begin{cases} C \exp(-c(1+t)^{1-3\beta}) & \beta < 1/3 \\
C (1+t)^{-c} & \beta =1/3 \\
\exp(-c) & \beta > 1/3.
\end{cases}
\end{align}
Since every $t \geq 0$ is contained in such an interval for some $k$, this gives the desired uniform stabilization rates.

2. Fix $t \in (0,\infty)$ and define $k = \lceil \frac{t}{S_0} \rceil$. By the definition of $W$
\begin{align}
\Sigma(t)&=\inf_{\gamma} \int_0^t W(\gamma(s), s) ds \leq C_W \sum_{j=0}^{k} f(j) =C_W \left( \sum_{j=1}^k f(j)+f(0) \right) 
\end{align}
Now since since $f$ is decreasing, $f(a)\leq \int_{a-1}^{a}f(s)ds$, and so 
\begin{align}
\Sigma(t) &\leq C_W \left(\int_0^k f(s) ds +f(0)\right) \\
&\leq C_M C_W \left(\int_0^k \frac{1}{(s+1)^{\beta}} ds+1\right) =
\begin{cases} \frac{C_M C_W}{(1-\beta)}\left( (k+1)^{1-\beta} -\beta \right) &\beta \neq 1 \\
 C_M C_W (\ln(k+1)+1)  &\beta=1. \end{cases} \label{eq:Sigmashortinterval}
\end{align}
Now note that $k \leq \frac{t}{S_0}+1$. Note further, since $t \geq 0$ and for $ 0 \leq \beta <1$
\begin{equation}
	\left(\frac{t}{S_0}+2\right)^{1-\beta} \leq \left(\frac{t}{S_0}\right)^{1-\beta}+2,
\end{equation}
which can be seen by plugging in $t=0$, and separately, comparing the $t$ derivatives of both sides. For $\beta>1$, note that there exists $C>0$, such that for all $k$, 
\begin{equation}
	(1-\beta)^{-1}((k+1)^{1-\beta}-\beta) \leq C.
\end{equation}
Applying these to \eqref{eq:Sigmashortinterval} we obtain
\begin{align}
	\Sigma(t) &\leq \begin{cases}
		\frac{C_M C_W}{1-\beta}\left( \left( \frac{t}{S_0}+2 \right)^{1-\beta}-\beta \right) & \beta<1\\
		C_M C_W \left(\ln \left( \frac{t}{S_0}+2 \right) +1 \right) & \beta=1 \\
		C & \beta >1
	\end{cases} \\
	&\leq \begin{cases}
		\frac{C_M C_W}{1-\beta}\left( \left( \frac{t}{S_0}\right)^{1-\beta}-C \right) & \beta<1\\
		C_M C_W \left(\ln \left( \frac{t}{S_0}+2 \right) +1 \right) &\beta=1 \\
		C & \beta >1. 
	\end{cases}
\end{align}
Applying Theorem \ref{lowerboundthm} gives the desired conclusions.
\end{proof}

\appendix

\appendix 
\section{Manifolds with Boundary}
\subsection{Generalized Null Bicharacteristics}\label{nullbichar}
This appendix introduces the generalized bicharacteristic flow of \cite{MelroseSjostrand1978}, see also \cite[Chapter 24]{Hormander3}. The exposition is adapted from Section 1 of \cite{LRLTT}.

Let $g^*$ be the dual metric to $g$. On $T^*(M \times \Rb),$ the principal symbol of $\p_t^2 -\Delta_g$ is $p(x,t,\xi, \tau) = -\tau^2 + g_x^*(\xi,\xi)$, where $(\tau, \xi)$ are the fiber variables for $(t,x)$. The Hamilton vector field of $p$ is given by $H_p f = \{p,f\}$. Classical bicharacteristics are the integral curves of $H_p$ in $\Char(p) = \{p=0\}$. For $\p_t^2-\Delta_g$, the projection of classical bicharacteristics onto $M$, using $t$ as a parameter, are exactly unit-speed geodesics on $M$. 

Define $Y = \Omegab \times \Rb$ and 
\begin{equation}
	\Char_Y(p) = \{\rho=(x,t,\xi,\tau) \in T^*(M \times \Rb)\backslash 0: \, x \in \Omegab \text{ and } p(\rho)=0\}.
\end{equation} 
Let $M$ have dimension $d$. Close to the boundary of $\Omega$ we use geodesic normal coordinates $(x', x_d)$ where $x'=(x_1, x_2, \ldots, x_{d-1})$. So $x_d=0$ at $\p\Omega$ and $x_d>0$ in $\mathring{\Omega}$. Set $y=(x,t), y'=(x',t)$ and $y_{d}=x_d$ which provide coordinates near $\p Y= \p \Omega \times \Rb$. Let $\eta=(\eta', \eta_d)$ be the cotangent variables associated to $y=(y',y_d)$. In these coordinates, the principal symbol of the wave operator is
$$
p(y', y_d, \eta', \eta_d) = \eta_d^2 + r(y, \eta'),
$$
where $r$ is a smooth $y_d$-family of tangential differential symbols.  Define $T^* Y= T^* (\Omega \times \Rb)$ and its boundary
$$
\p T^* Y = \{ \rho = (y, \eta) \in T^*(\Omega \times \Rb): y_d =0\}.
$$
Use $r_0$ to denote the restriction of $r$ to $\p T^* Y$, that is, $r_0(y', \eta') = r(y', y_d=0, \eta')$. 

Define 
\begin{equation}
	\Sigma_0 = \{ \rho=(y',y_d,\eta', \eta_d) \in \Char_Y(p): y_d=0\},
\end{equation} 
or $\Sigma_0 = \Char_Y(p) \cap \p T^* Y$. Using local coordinates define the glancing set $\Gc \subset \Sigma_0$ by 
$$
\Gc= \{(y', y_d=0, \eta', \eta_d) \in \Sigma_0: \eta_d=r(y,\eta')=r_0(y',\eta')=0\} .
$$
The glancing set can also be decomposed as $\Gc=\Gc^2 \supset \Gc^3 \supset \cdots \supset \Gc^{\infty}$, where we write $\rho=(y', y_d=0, \eta', \eta_d) \in \Gc^{k}$, c.f. \cite[(24.3.2)]{Hormander3}, if 
$$
p(y', y_d=0, \eta', \eta_d)=0, \quad H_p^j y_d =0, \quad 0 \leq j <k.
$$
Finally, consider $\Gc^2 \backslash \Gc^3$, the glancing set of order precisely 2, and define subsets of it, the diffractive set $\Gc_d^2$ and the gliding set $\Gc_g^2$, c.f. \cite[Definition 24.3.2]{Hormander3}, as 
$$
\rho \in \Gc^2_d \, \,(\text{resp. } \Gc_g^2) \iff \rho \in \Gc^2 \backslash \Gc^3 \,  \text{ and } \, H_p^2 y_d> 0 \, \,(\text{resp.} <0).
$$
Then $\Gc^2 \backslash \Gc^3 = \Gc_d^2 \cup \Gc_g^2$. This decomposition can be continued for higher even orders of glancing points, but is not needed in this paper. 
\begin{definition}\label{genbichar}
A generalized bicharacteristic of $p$ is a differentiable map 
$$
\Rb \backslash \mathcal{B} \ni s \mapsto \gamma(s) \in (\Char_Y(p) \backslash \Sigma_0) \cup \Gc,
$$
satisfying the following properties
\begin{enumerate}
	\item $\gamma'(s) = H_p(\gamma(s))$ if $\gamma(s) \in \Char_Y(p) \backslash \Sigma_0$ or $\gamma(s) \in \Gc^2_d$. 
	\item $\gamma'(s) = H_{r_0}(\gamma(s))$ if $\gamma(s) \in \Gc \backslash \Gc^2_d$. 
	\item  Every $s_0 \in \mathcal{B}$ is isolated, and there exists $\d>0$, such that for $s \in (s_0-\d, s_0) \cup (s_0, s_0+\d)$ then $\gamma(s) \in \Char_Y(p) \backslash \Sigma_0$. Furthermore, the limits $\lim_{s \ra s_0^{\pm}} \gamma(s)=(y^{\pm}, \eta^{\pm})$ exist and $y_d^-=y_d^+ =0, y^-{}'=y^+{}', \eta^-{}'=\eta^+{}',$ and $\eta_d^-=-\eta_d^+$. 
\end{enumerate}
\end{definition}
In case 1 the generalized bicharacteristic is either in the interior, or at a diffractive point. Here it coincides with a segment of a classical bicharacteristic. Case 2 describes how a generalized bicharacteristic enters or leave the boundary $\p T^* Y$ or locally remains in it. Case 3 describes reflections, when a bicharacteristic transversally encounters the boundary.  

For $y$ near the boundary of $Y$, define ${}^b T_y Y$ to be the tangent vector field generated by $\p_{y'}$ and $y_d \p_{y_d}$. Then define the compressed cotangent bundle ${}^bT^*Y=\bigcup_{y \in Y} ({}^b T_yY)^*$, and define the compression map
\begin{align}
&j: T^* Y \ra {}^b T^* Y, \\
&(y, \eta', \eta_d) \mapsto (y, \eta', y_d\eta_d).
\end{align}
Note that 
\begin{itemize}
	\item for $y \in \Omega \times \Rb,$ then ${}^bT^*_yY=j(T^*_y Y)$ is isomorphic to $T^*_y Y=T^*_y(\Omega \times \Rb)$,
	\item for $y \in \p \Omega \times \Rb,$ then ${}^b T^*_y Y= j(T^*_y Y)$ is isomorphic to $T^*_y(\p \Omega \times \Rb)$.
\end{itemize}
The set of points $(y', y_n=0, \eta', 0) \in {}^b T^* Y|_{y_n=0}$ such that $r_0(y', \eta')>0$ is called the elliptic set $E$. Also set $\hat{\Sigma} = j(\Char_Y(p)) \cup E$ and take the cosphere quotient space $S^* \hat{\Sigma} = \hat{\Sigma}/(0,+\infty)$. This is needed in the defect measure construction. 

Define compressed generalized bicharacteristics to be the image under $j$ of the generalized bicharacteristics of Definition \ref{genbichar}. If ${}^b \gamma=j(\gamma)$ is a compressed generalized bicharacteristic, then ${}^b \gamma:\Rb \ra {}^b T^* Y \backslash E$ is a continuous map. Using $t$ as a parameter,  projecting compressed generalized bicharacteristics down to $M$ gives unit-speed generalized geodesics for $\Omega$. Generalized geodesics remain in $\Omegab$.  

An important feature of compressed generalized bicharacteristics, as shown in \cite{MelroseSjostrand1978}, is the following proposition. 
\begin{proposition}
A compressed generalized bicharacteristic with no point in $\Gc^{\infty}$ is uniquely determined by any one of its points.
\end{proposition}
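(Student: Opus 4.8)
The plan is to establish the uniqueness by a connectedness argument on the parameter line, reduced to local uniqueness near each type of point that a generalized bicharacteristic can traverse; this is the substance of \cite{MelroseSjostrand1978} (see also \cite[Chapter 24]{Hormander3}), and I would recall its structure rather than reprove it from scratch.

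Suppose $\gamma_1$ and $\gamma_2$ are compressed generalized bicharacteristics, neither meeting $\Gc^\infty$, with $\gamma_1(s_0)=\gamma_2(s_0)$. Set $S=\{s\in\Rb:\gamma_1(s)=\gamma_2(s)\}$. Since compressed generalized bicharacteristics are continuous maps $\Rb\to{}^bT^*Y$, the set $S$ is closed, and it contains $s_0$, so it is nonempty; by connectedness of $\Rb$ it suffices to show $S$ is open. Fix $s_1\in S$ and let $\rho=\gamma_1(s_1)=\gamma_2(s_1)$; I would then argue according to the location of $\rho$. \emph{(i) Interior and diffractive points.} If $\rho$ lies over $\Char_Y(p)\backslash\Sigma_0$, or $\rho\in\Gc^2_d$, then near $s_1$ the underlying bicharacteristics both solve $\gamma'=H_p(\gamma)$ with $H_p$ smooth, so the Picard--Lindel\"of theorem forces equality near $s_1$; at a diffractive point one uses in addition that $H_p^2 y_d>0$ makes $y_d$ have a strict local minimum at $s_1$, so both curves lie over $\Char_Y(p)\backslash\Sigma_0$ on each side of $s_1$, reducing to the interior case there. \emph{(ii) Transversal reflections.} If $s_1$ corresponds to a point of the exceptional set, the third condition of Definition~\ref{genbichar} determines the limit $(y^+,\eta^+)$ from $(y^-,\eta^-)$ (namely $y_d^\pm=0$, $y^-{}'=y^+{}'$, $\eta^-{}'=\eta^+{}'$, $\eta_d^-=-\eta_d^+$), so agreement of the two curves on one side of $s_1$ forces agreement of the one-sided limits, and case (i) propagates the equality to the other side. \emph{(iii) Glancing points.} If $\rho\in\Gc\backslash\Gc^2_d$, then near $s_1$ both curves solve $\gamma'=H_{r_0}(\gamma)$ on $\p T^*Y\cong T^*(\p\Omega\times\Rb)$ with $H_{r_0}$ smooth, so Picard--Lindel\"of again identifies them, \emph{provided} one knows that a generalized bicharacteristic through a non-diffractive glancing point must glide rather than branch off into the interior.

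The main obstacle is exactly this proviso, and it is where the hypothesis $\Gc^\infty=\emptyset$ along the bicharacteristic enters. To handle it I would invoke the Melrose--Sj\"ostrand normal-form analysis near a glancing point of finite order $k$: one shows by induction on $k$ that the order of contact of any generalized bicharacteristic with $\p T^*Y$ near such a point is bounded, that it cannot leave the boundary except at a diffractive point, and that the defining relations together with $y_d\geq 0$ pin the curve to the unique gliding ray of $H_{r_0}$ (or to a segment re-entering the interior at a diffractive point, covered by case (i)). Since $\Gc^\infty=\emptyset$ means every point of $\gamma_1$ and $\gamma_2$ has finite order of contact, this disposes of all remaining configurations, so $S$ is open and hence $S=\Rb$. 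For the normal-form step I would cite \cite{MelroseSjostrand1978} and \cite[Chapter 24]{Hormander3} rather than reproduce the induction here.
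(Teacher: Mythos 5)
The paper does not prove this proposition at all: it is quoted directly from \cite{MelroseSjostrand1978} (see also \cite[Chapter 24]{Hormander3}), so your connectedness skeleton with case analysis is an accurate outline of the argument in those references rather than a competing route, and the one genuinely hard step you isolate --- the normal-form induction near glancing points of finite order, where $\Gc^{\infty}=\emptyset$ is used and where uniqueness can actually fail at infinite-order contact --- is deferred to exactly the same citations the paper relies on. In that sense your proposal is correct and takes essentially the same approach as the paper, just with a helpful sketch of the local uniqueness at interior, diffractive, reflection, and gliding points added on top of the citation.
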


\subsection{Defect Measure with Boundary}\label{defect}
Following \cite[Appendix A]{LRLTT} we introduce defect measures on manifolds with boundary. See also \cite{Lebeau1996} and \cite{BurqLebeau2001}
\begin{definition}
Define $\Psi_b^m(Y)$ to be made up of operators of the form $R=R_{\inter} + R_{\tan}$ where $R_{\inter}$ is a classical pseudodifferential operator of order $m$, with compact support in $\Omega \times \Rb,$ and $R_{\tan}$ is a classical tangential pseudodifferential operator of order $m$. In the local normal coordinates introduced in Appendix \ref{nullbichar}, $R_{\tan}$ acts only in the $y'$ variables.  
\end{definition}
Let $\sigma(R_{\inter})$ and $\sigma(R_{\tan})$ be the homogeneous principal symbols of $R_{\inter}$ and $R_{\tan}$ respectively. The restrictions $\sigma(R_{\inter})|_{\Char_Y(p)}$ and $\sigma(R_{\tan})|_{\Char_Y(p) \cup T^* (\Rb \times \p \Omega)}$ make sense. Using the compression map $j: T^* Y \ra {}^b T^* Y$ we define
$$
j\left(\sigma(R_{\inter})|_{\Char_Y(p)} + \sigma(R_{\tan})|_{\Char_Y(p) \cup T^* (\p \Omega \times \Rb)} \right) = :\kappa(R),
$$ 
which is a continuous function on $\hat{\Sigma}= j (\Char_Y(p)) \cup E$. Furthermore, by the homogeneity of the symbols $\kappa(R)$ is a continuous function on $S^* \hat{\Sigma} = \hat{\Sigma} / (0, \infty)$. Then by \cite[Section 2.1]{Lebeau1996} and \cite[Proposition 2.5]{BurqLebeau2001}.
\begin{proposition}
Suppose $\{u_n\}$ is a bounded sequence in $H^1(\Omega\times \Rb)$. If $(\p_t^2 -\Delta_g) u_n=0$ and $u_n$ weakly converges to $0$, then there exists a subsequence $\{u_{n_j}\}$ and a positive measure $\mu$ on $S^* \hat{\Sigma}$ such that for any $R \in \Psi^0(Y)$
$$
\lim_{j \ra \infty} \<Ru_{n_j}, u_{n_j}\>_{H^1(\Omega \times \Rb)} \ra \<\mu, \kappa(R)\>.
$$ 
\end{proposition}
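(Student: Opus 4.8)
The plan is to carry out the standard microlocal defect (H-)measure construction of Tartar and G\'erard, in the boundary-adapted form of \cite{Lebeau1996, BurqLebeau2001}. First I would note that, for each fixed $R \in \Psi_b^0(Y)$, the $L^2$-boundedness of $R$ on $H^1$ gives $|\langle R u_n, u_n\rangle_{H^1(\Omega\times\Rb)}| \le C\|R\|_{\mathcal L(H^1)}\sup_n\|u_n\|_{H^1}^2 < \infty$, so the scalar sequence $\langle R u_n, u_n\rangle_{H^1}$ is bounded. Picking a countable family $\{R_k\}\subset\Psi_b^0(Y)$, self-adjoint up to smoothing terms, whose compressed principal symbols $\{\kappa(R_k)\}$ are dense in $C_c(S^*\hat\Sigma)$, a diagonal argument extracts a subsequence $\{u_{n_j}\}$ along which $\ell(R):=\limj\langle R u_{n_j},u_{n_j}\rangle_{H^1}$ exists for every $R_k$, and hence --- once the bound in the next step is in place --- for every $R\in\Psi_b^0(Y)$.

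The heart of the argument is to show that $\ell(R)$ depends only on $\kappa(R)$, and more precisely that $\kappa(R)=0$ forces $\ell(R)=0$, with the quantitative bound $|\ell(R)|\le C\sup_{S^*\hat\Sigma}|\kappa(R)|$. For the interior piece $R_{\inter}$ this is classical: since $dp\neq 0$ on $\Char_Y(p)\setminus 0$, vanishing of $\sigma(R_{\inter})$ on $\Char_Y(p)$ lets one write $R_{\inter}=Q\,\Box+S$ with $Q\in\Psi^{-2}$ compactly supported in $\Omega\times\Rb$ and $S\in\Psi^{-1}$; then $\langle Q\,\Box u_n,u_n\rangle_{H^1}=0$ since $\Box u_n=0$, while $S u_n\to 0$ strongly in $H^1_{\mathrm{loc}}$ because $u_n\rhu 0$ in $H^1$ and $S$ gains a derivative, so this term vanishes in the limit. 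The tangential piece $R_{\tan}$ near $\p Y$ is the main obstacle: here one must use the normal form $p=\eta_d^2+r(y,\eta')$, the equation $\Box u_n=0$, and the boundary condition satisfied by $u_n$ to show that a tangential operator whose compressed symbol vanishes on $j(\Char_Y(p))\cup E$ produces only contributions of negative order (killed by weak convergence and local compactness) or contributions proportional to $\Box u_n=0$; this is exactly the content of \cite[Section 2.1]{Lebeau1996} and \cite[Proposition 2.5]{BurqLebeau2001}, and no hypothesis on the glancing set is needed for this existence statement. Granting this, $\ell$ factors through $R\mapsto\kappa(R)$, and by Step 1 together with the quantitative bound it extends to a bounded linear functional $\tilde\ell$ on the closure in $C_c(S^*\hat\Sigma)$ of $\{\kappa(R):R\in\Psi_b^0(Y)\}$; since that set is a subalgebra separating points, containing constants and closed under conjugation, Stone--Weierstrass identifies it with all of $C_c(S^*\hat\Sigma)$.

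Finally I would establish positivity: if $\kappa(R)\ge 0$, choose $B\in\Psi_b^0(Y)$ with $\kappa(B)=\sqrt{\kappa(R)}$, so that $R=B^*B+S'+(\text{$\Box$-terms})$ with $S'$ of negative order; then $\langle R u_n,u_n\rangle_{H^1}=\|B u_n\|_{H^1}^2+o(1)\ge o(1)$, and letting $n\to\infty$ gives $\tilde\ell(\kappa(R))=\ell(R)\ge 0$. Thus $\tilde\ell$ is a positive bounded linear functional on $C_c(S^*\hat\Sigma)$, and the Riesz--Markov representation theorem produces a positive Radon measure $\mu$ on $S^*\hat\Sigma$ with $\ell(R)=\int_{S^*\hat\Sigma}\kappa(R)\,d\mu=\langle\mu,\kappa(R)\rangle$ for all $R\in\Psi_b^0(Y)$, which is the assertion. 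The one genuinely delicate point throughout is the tangential boundary calculus in Step 2 --- in particular making sense of $\kappa(R)$ as a continuous function on $S^*\hat\Sigma$ and proving the vanishing of non-characteristic and tangential contributions --- for which I would follow \cite{BurqLebeau2001} essentially verbatim.
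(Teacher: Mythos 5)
Your outline is correct and coincides with the paper's treatment: the paper does not prove this proposition but quotes it, citing exactly \cite[Section 2.1]{Lebeau1996} and \cite[Proposition 2.5]{BurqLebeau2001} (as a boundary version of \cite{Gerard1991, Tartar1990}), and your sketch is the standard G\'erard--Tartar/Lebeau--Burq construction underlying those references, with the genuinely hard tangential boundary step deferred to the same sources. So there is nothing to compare beyond noting that your interior-symbol factorization, positivity, and Riesz--Markov steps are the expected ones and are consistent with the cited proofs.
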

This is a generalization of \cite{Gerard1991} and \cite{Tartar1990}.

\section{Wave equation facts}
In this appendix we compile some elementary facts about solutions of the wave equation.

\subsection{Proof of weak continuity for wave equation}
In this appendix we show that solutions of the wave equation are weakly continuous in their initial data. 
\begin{lemma}\label{weakconverge}
Suppose $(\psi_{0,n}, \psi_{1,n}) \in H \times L^2(\Omega)$ has a weak limit $(\psi_0, \psi_1)$ in $H \times L^2(\Omega)$. If $\psi_n$ solves \eqref{WEb} with initial data $(\psi_{0,n}, \psi_{1,n})$ and $\psi$ solves \eqref{WEb} with initial data $(\psi_0,\psi_1)$, then $\psi_n \rhu \psi$ weakly in $L^2(0,T; H)$ and $\p_t \psi_n \rhu \p_t \psi$ weakly in $L^2(0,T;L^2(\Omega))$.
\end{lemma}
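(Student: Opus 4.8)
The plan is to prove this by the standard compactness-and-uniqueness argument, the only delicate point being the identification of the initial data of the weak limit. Since a weakly convergent sequence is norm bounded, set $R := \sup_n\big(\|\psi_{0,n}\|_H + \|\psi_{1,n}\|_{L^2(\Omega)}\big) < \infty$. Conservation of energy for \eqref{WEb}, $E(\psi_n,t)=E(\psi_n,0)$, immediately gives $\|\nabla_g\psi_n(\cdot,t)\|_{L^2(\Omega)} + \|\partial_t\psi_n(\cdot,t)\|_{L^2(\Omega)}\le CR$ uniformly in $t\in[0,T]$ and $n$. In the Dirichlet case Poincaré's inequality upgrades this to a uniform bound on $\|\psi_n(\cdot,t)\|_H$. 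In the Neumann case (or when $\partial\Omega=\emptyset$) I would additionally use that $\tfrac{d}{dt}\int_\Omega\partial_t\psi_n\,dx_g=\int_\Omega\Delta_g\psi_n\,dx_g=0$, so that $\int_\Omega\psi_n(\cdot,t)\,dx_g=\int_\Omega\psi_{0,n}\,dx_g+t\int_\Omega\psi_{1,n}\,dx_g$ stays bounded on $[0,T]$, and combine this with the Poincaré--Wirtinger inequality to again control $\|\psi_n(\cdot,t)\|_H$. Either way, $\{\psi_n\}$ is bounded in $L^2(0,T;H)$ and $\{\partial_t\psi_n\}$ is bounded in $L^2(0,T;L^2(\Omega))$.

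Next I would pass to a subsequence (not relabeled) with $\psi_n\rightharpoonup\phi$ in $L^2(0,T;H)$ and $\partial_t\psi_n\rightharpoonup\zeta$ in $L^2(0,T;L^2(\Omega))$; since distributional differentiation in $t$ is continuous for weak convergence, $\zeta=\partial_t\phi$. Because $L^2(0,T;H)$ embeds continuously in $L^2(\Omega\times(0,T))$, weak convergence there implies convergence in the sense of distributions, and $(\partial_t^2+A)$ is continuous on distributions, so $(\partial_t^2+A)\phi=0$; moreover $\phi\in L^2(0,T;H)$ carries the boundary condition.

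The one step that requires care is checking $\phi|_{t=0}=\psi_0$ and $\partial_t\phi|_{t=0}=\psi_1$. For $v\in C_c^\infty(\Omega)$ and $\theta\in C^\infty([0,T])$ with $\theta(0)=1$, $\theta(T)=0$, I would integrate by parts in $t$ in $\int_0^T\langle\psi_n,v\rangle_{L^2}\theta'\,dt$ and in $\int_0^T\langle\partial_t\psi_n,v\rangle_{L^2}\theta'\,dt$ (using $\langle\partial_t^2\psi_n,v\rangle_{L^2}=\langle\psi_n,\Delta_g v\rangle_{L^2}$), then let $n\to\infty$ using the weak convergences above together with $\psi_{0,n}\rightharpoonup\psi_0$ and $\psi_{1,n}\rightharpoonup\psi_1$ in $L^2(\Omega)$. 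Comparing the resulting identities with the same integrations by parts applied directly to $\phi$ forces $\langle\phi(\cdot,0),v\rangle_{L^2}=\langle\psi_0,v\rangle_{L^2}$ and $\langle\partial_t\phi(\cdot,0),v\rangle_{L^2}=\langle\psi_1,v\rangle_{L^2}$ for all such $v$, hence $(\phi,\partial_t\phi)|_{t=0}=(\psi_0,\psi_1)$. By the uniqueness of weak solutions of \eqref{WEb} recorded after \eqref{TDWEb}, $\phi=\psi$, so the limit is independent of the subsequence and the usual subsequence argument promotes the convergence to the full sequence $\{\psi_n\}$.

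As an alternative to the middle steps, one can use the explicit representation $\psi_n(\cdot,t)=\cos(t\sqrt{A})\psi_{0,n}+\tfrac{\sin(t\sqrt{A})}{\sqrt{A}}\psi_{1,n}$: the operators $\cos(t\sqrt{A})\colon H\to H$ and $\tfrac{\sin(t\sqrt{A})}{\sqrt{A}}\colon L^2(\Omega)\to H$ are bounded uniformly for $t\in[0,T]$, so $\psi_n(\cdot,t)\rightharpoonup\psi(\cdot,t)$ in $H$ and $\partial_t\psi_n(\cdot,t)\rightharpoonup\partial_t\psi(\cdot,t)$ in $L^2(\Omega)$ for each fixed $t$, and dominated convergence against the uniform bounds upgrades this to weak convergence in $L^2(0,T;H)$ and in $L^2(0,T;L^2(\Omega))$ respectively. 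I expect the main obstacle to be precisely the trace identification at $t=0$, since weak convergence in $L^2(0,T;H)$ does not by itself control values at a single time; the only other mild subtlety is tracking the conserved spatial mean in the Neumann case so as to obtain a genuine $H$-bound rather than merely an energy bound.
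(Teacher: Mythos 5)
Your overall scheme is the same as the paper's: uniform bounds from conservation of energy, extraction of weak limits, identification of the limit as a solution with the right initial data by integrating by parts in $t$ against test functions vanishing at $t=T$, then uniqueness of weak solutions plus the subsequence argument. The differences in the bounded-ness step are harmless: the paper gets the $L^2_x$ bound on $\psi_n(t)$ by a Gr\"onwall argument (which treats Dirichlet, Neumann and boundaryless cases at once), while you use Poincar\'e in the Dirichlet case and conservation of the spatial mean plus Poincar\'e--Wirtinger in the Neumann/boundaryless case; both work. You also avoid the paper's bound on $\partial_t^2\psi_n$ in $L^2(0,T;H')$ by moving the time derivatives onto the test function, which is fine for the initial-data identification.

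The one genuine soft spot is your identification of the limit as a solution of \eqref{WEb} in the Neumann case. Testing only against $v\in C_c^\infty(\Omega)$ (and arguing that $\partial_t^2+A$ is continuous on distributions) shows that the limit $\phi$ satisfies the wave equation in the interior, and the remark that ``$\phi\in L^2(0,T;H)$ carries the boundary condition'' is correct only for Dirichlet, where $H=H^1_0(\Omega)$. For Neumann, membership in $H^1(\Omega)$ encodes nothing about the normal derivative, and the interior equation alone does not single out the Neumann solution (for instance, when the data happen to lie in $H^1_0\times L^2$, the Dirichlet solution satisfies the same interior equation and lies in the same space), so the appeal to uniqueness of weak solutions of \eqref{WEb} is not yet justified. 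The fix is exactly what the paper does: pass to the limit in the variational formulation $\int_0^T\bigl(-\langle\partial_t\psi_n,\partial_t g\rangle+\langle\nabla\psi_n,\nabla g\rangle\bigr)\,dt=0$ with $H^1(\Omega)$-valued (not compactly supported) test functions $g$, which only uses the weak convergences you already have and encodes the Neumann condition; the paper's version tests against $g\in C^1([0,T];H)$ using the $\partial_t^2\psi_n$ bound. Your alternative argument via the spectral representation $\psi_n(\cdot,t)=\cos(t\sqrt A)\psi_{0,n}+\frac{\sin(t\sqrt A)}{\sqrt A}\psi_{1,n}$ sidesteps this entirely (the functional calculus is built from eigenfunctions already satisfying the boundary condition) and, with the uniform operator bounds on $[0,T]$ and dominated convergence as you describe, gives a complete and somewhat slicker proof than the paper's.
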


\label{waveweakcontinuityapp}
\begin{proof}
Throughout the proof, inner products are taken over $L^2(\Omega)$. To begin
\begin{align}
\p_t \ltwom{\psi_n(t)}^2 = 2 \int_\Omega \psi_n \p_t \psi_n dx &\leq \left( \ltwo{\p_t \psi_n(t)}^2 +  \ltwom{\psi_n(t)}^2 \right)\\
&\leq \left(2 E(\psi_n, t) + \ltwom{\psi_n(t)}^2\right).
\end{align}
Therefore by Gr\"onwall's inequality
\begin{equation}
	\ltwom{\psi_n(t)}^2 \leq e^{t} \left( \ltwom{\psi_n(0)}^2 + \int_0^t 2 E(\psi_n, s) ds \right). 
\end{equation}
Since $E(\psi_n, t)=E(\psi_n,0)$ for all $t$
\begin{align}
\max_{0 \leq t \leq T} \left( \ltwom{\psi_n(t)}^2 + E(\psi_n, t) \right) &\leq (2 Te^{T} +1) \left( E(\psi_n, 0) + \ltwom{\psi_n(0)}^2 \right) \\
&= C \left( \nm{\psi_{0,n}}_{H}^2 + \ltwo{\psi_{1,n}}^2 \right).
\end{align}
Furthermore for any $v \in H$ with $\nm{v}_{H} \leq 1$
\begin{equation}
	\< \p_t^2 \psi_n(t), v(t)\> + \<\nabla \psi_n(t), \nabla v(t)\> =0, 
\end{equation}
so 
\begin{equation}
	|\<\p_t^2 \psi_n(t), v(t)\>| \leq \ltwo{\nabla \psi_n(t)} \leq (2E(\psi_n,t))^{1/2}.
\end{equation}
And thus 
\begin{equation}
	\int_0^T \nm{\p_t^2 \psi_n(t) }_{H'}^2 dt \leq 2T E(\psi_n, 0). 
\end{equation}
Thus there exists $C>0$ such that 
\begin{equation}
\max_{0 \leq t \leq T} \left( \ltwom{\psi_n(t)}^2 + E(\psi_n, t) \right) + \int_0^T \nm{\p_t^2 \psi_n(t)}_{H'}^2 dt \leq C \left( \nm{\psi_{0,n}}_{H}^2 + \ltwo{\psi_{1,n}}^2 \right).
\end{equation}
Therefore $\psi_n$ is bounded in $L^2(0,T; H), \p_t \psi_n$ is bounded in $L^2(0,T; L^2(\Omega))$ and $\p_t^2 \psi_n$ is bounded in $L^2(0,T;H')$. 

Thus there exists $v \in L^2(0,T; H)$ such that, up to replacement by subsequences, $\psi_n \rhu v$ weakly in $L^2(0,T; H),$ $\p_t \psi_n \rhu \p_t v$ weakly in $L^2(0,T;L^2(\Omega))$ and $\p_t^2 \psi_n \rhu \p_t^2 v$ weakly in $L^2(0,T;H')$. The proof will be completed if it is shown that $v$ solves the wave equation and has $(v,\p_t v)|_{t=0} = (\psi_0, \psi_1)$. 

To see this let $g \in C^1([0,T]; H)$. Then since $\psi_n$ solves \eqref{WEb} 
\begin{equation}\label{weakwave}
\int_0^T \< \p_t^2 \psi_n, g\> + \< \nabla \psi_n, \nabla g\> dt =0.
\end{equation}
And so taking the limit as $n \ra \infty$, by the weak convergence of $\p_t^2 \psi_n$ to $\p_t^2 v$ and $\nabla \psi_n$ to $\nabla v$
\begin{equation}
\int_0^T \<\p_t^2 v, g\> + \<\nabla v, \nabla g\> dt =0.
\end{equation}
That is $v$ solves \eqref{WEb}. Note also $v \in C([0,T]; L^2(\Omega)), \p_t v \in C([0,T]; H')$, so it makes sense to evaluate $v, \p_t v$ at $t=0$. Now choose $f \in C^2([0,T]; H)$ with $f(T)=\p_t f(T)=0$. Then replacing $g$ by $f$ in \eqref{weakwave} and integrating by parts twice in $t$
\begin{equation}
 \int_0^T \<\psi_n, \p_t^2 f\> + \<\nabla \psi_n, \nabla f\> dt = \< \p_t \psi_n(0), f(0)\> - \<\psi_n(0), \p_t f(0)\>.
\end{equation}
Similarly 
\begin{equation}
 \int_0^T \<v, \p_t^2 f\> + \<\nabla v, \nabla f\> dt = \< \p_t v(0), f(0)\> - \<v(0), \p_t f(0)\>.
\end{equation}
 By weak convergence of $\psi_n$ to $v$ in $L^2(0,T;H)$ the left hand sides of the two preceding equations are equal after taking the limit as $n \ra \infty$. So
\begin{equation}
 \limn \< \p_t \psi_n(0), f(0)\> - \<\psi_n(0), \p_t f(0)\> = \< \p_t v(0), f(0)\> - \<v(0), \p_t f(0)\>.
\end{equation}
 Now note $\psi_n(0) =\psi_{0,n} \rhu \psi_0$ in $H$ and $\p_t \psi_n(0) = \psi_{1,n} \rhu \psi_1$ in $L^2(\Omega)$ so
\begin{equation}
 \< \psi_1, f(0)\> - \<\psi_0, \p_t f(0)\> = \< \p_t v(0), f(0)\> - \<v(0), \p_t f(0)\>.
\end{equation}
 Since $f(t=0), \p_t f(t=0)$ are arbitrary $(v,\p_t v)|_{t=0}=(\psi_0,\psi_1)$. Therefore $\psi=v$, by uniqueness of weak solutions of the wave equation.
\end{proof}

\subsection{Short Time Observability}
In this appendix we prove an observability inequality when the observability window $Q= \Omega_x \times [0,\d]_t$. In particular, we provide an explicit relationship between the observability constant and $\d$. In general, the observability constant must grow like $1/\d^3$. At an intuitive level, this is because solutions of the wave equation with most of their energy in potential energy, have kinetic energy like $\sin^2(t)$, which for small $t$ is of size $t^2$. Integrating this from $t=0$ to $t=\d$, gives a quantity of size $\d^3$, as shown more precisely in Lemma \ref{trigintlemma}.  

We begin with an estimate of trigonometric functions which is uniform in the frequency $\lambda$.
\begin{lemma}\label{trigintlemma}
Fix $N, \lambda_*>0$. There exists $C>0,$ such that for all $A,B \in \C, \d \in (0,N],$ and $\lambda \geq \lambda_*$
$$
|A|^2 + |B|^2 \leq \frac{C}{\d^3} \int_0^{\d} \left|-A \sin(\lambda t) + B \cos(\lambda t)\right|^2 dt.
$$
Furthermore, the power on $\d$ is sharp when $A=1$, $B=0$. 
\end{lemma}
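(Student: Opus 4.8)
The plan is to rescale the time variable to eliminate $\lambda$, reducing the claim to a clean lower bound on a quadratic form, and then to treat the regimes $\lambda\delta$ small and $\lambda\delta$ large separately. Substituting $s=\lambda t$ and setting $\phi(s)=A\sin s-B\cos s$ (so that $-A\sin(\lambda t)+B\cos(\lambda t)=-\phi(\lambda t)$), one has
\begin{equation}\label{eq:trigintscaling}
\int_0^{\delta}|-A\sin(\lambda t)+B\cos(\lambda t)|^2\,dt=\frac{1}{\lambda}\int_0^{\lambda\delta}|\phi(s)|^2\,ds .
\end{equation}
So it suffices to prove there is $c>0$, independent of $\mu,A,B$, with $\int_0^{\mu}|A\sin s-B\cos s|^2\,ds\geq c\min(\mu^3,\mu)(|A|^2+|B|^2)$ for all $\mu>0$ and $A,B\in\mathbb{C}$. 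Granting this, I would take $\mu=\lambda\delta$ in \eqref{eq:trigintscaling}, divide by $\lambda$, and note $\min(\lambda^2\delta^3,\delta)/\delta^3=\min(\lambda^2,\delta^{-2})\geq\min(\lambda_*^2,N^{-2})$ using $\lambda\geq\lambda_*$ and $0<\delta\leq N$; this gives the stated inequality with $C=(c\min(\lambda_*^2,N^{-2}))^{-1}$.

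For the large-$\mu$ regime ($\mu\geq1$) I would argue by homogeneity: let $\Phi(\mu)$ be the infimum of $\int_0^{\mu}|A\sin s-B\cos s|^2\,ds$ over $|A|^2+|B|^2=1$. A time shift $s\mapsto s+\mu_1$ sends $A\sin s-B\cos s$ to $A'\sin s-B'\cos s$ with $(A',B')$ a unitary rotation of $(A,B)$, so splitting the integral over $[0,\mu_1+\mu_2]$ at $\mu_1$ shows $\Phi$ is superadditive; together with $\Phi(1)>0$ (which will come for free from the small-$\mu$ step) and $\lfloor\mu\rfloor\geq\mu/2$, this gives $\Phi(\mu)\geq\tfrac12\Phi(1)\mu$ for $\mu\geq1$. (Equivalently, the explicit antiderivative of $|\phi|^2$ expresses $\int_0^{\mu}|\phi|^2$ as $\tfrac12(|A|^2+|B|^2)\mu$ plus a remainder bounded by $\tfrac34(|A|^2+|B|^2)$, giving linear growth directly for $\mu\geq2$.)

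The heart of the matter is the small-$\mu$ regime $\mu\in(0,1]$, where I must show $\int_0^{\mu}|\phi|^2\geq c\mu^3(|A|^2+|B|^2)$. Writing $M=(|A|^2+|B|^2)^{1/2}$, I would use that $\phi''=-\phi$ and $|\phi(s)|\leq M$ (Cauchy--Schwarz), so Taylor's theorem gives $|\phi(s)-\phi(0)-\phi'(0)s|\leq\tfrac{M}{2}s^2$ with $\phi(0)=-B$, $\phi'(0)=A$. Then I would fix a small $c_1>0$ and split: if $|B|\geq c_1\mu M$ the constant term dominates on a subinterval $[0,c_1\mu/4]$, forcing $|\phi|\gtrsim\mu M$ there, hence $\int_0^{\mu}|\phi|^2\gtrsim\mu^3M^2$; if $|B|<c_1\mu M$ then $|A|^2\geq\tfrac34M^2$ and the linear term dominates on $[\mu/2,\mu]$, again forcing $|\phi|\gtrsim\mu M$ and $\int_0^{\mu}|\phi|^2\gtrsim\mu^3M^2$. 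Evaluating the resulting bound at $\mu=1$ also yields $\Phi(1)>0$, closing the previous step. I expect this to be the main obstacle: it amounts to showing the quadratic form $(A,B)\mapsto\int_0^{\mu}|A\sin s-B\cos s|^2\,ds$ has least eigenvalue of exact order $\mu^3$ as $\mu\to0$, and the delicacy is absorbing the cross term $\mathrm{Re}(A\bar B)$ in the borderline case $|B|\sim\mu|A|$ — which is exactly what the case split is engineered to handle.

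Finally, for the sharpness claim I would take $A=1$, $B=0$: for $\delta\leq\pi/(2\lambda)$ we have $\sin^2(\lambda t)\leq(\lambda t)^2$ on $[0,\delta]$, so $\int_0^{\delta}\sin^2(\lambda t)\,dt\leq\lambda^2\delta^3/3$. Hence $\delta^{-3}\int_0^{\delta}\sin^2(\lambda t)\,dt$ stays bounded as $\delta\to0$, whereas $\delta^{-p}\int_0^{\delta}\sin^2(\lambda t)\,dt\to0$ for any $p>3$; so no inequality of the stated form with $\delta^{-3}$ replaced by $\delta^{-p}$, $p>3$, can hold uniformly in $\delta\in(0,N]$ and $\lambda\geq\lambda_*$, which is the asserted optimality of the power of $\delta$.
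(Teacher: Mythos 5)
Your proposal is correct in substance but follows a genuinely different route from the paper. The paper splits directly on the size of $\lambda\delta$: for $\lambda\delta<\tfrac12$ it expands the integral explicitly, Taylor-expands $\sin(2\lambda\delta)$ and $\cos(2\lambda\delta)$, and absorbs the cross term $|A||B|\lambda\delta$ by a weighted Young inequality; for $\lambda\delta\ge\tfrac12$ it rescales and uses periodicity of $\cos^2$ after writing the integrand as $|\mathcal{A}\cos(t+\phi)|^2$. You instead rescale once and for all, reduce everything to the frequency-free bound $\int_0^{\mu}|A\sin s-B\cos s|^2\,ds\gtrsim\min(\mu^3,\mu)(|A|^2+|B|^2)$, and prove it via superadditivity of the infimum $\Phi(\mu)$ (using rotation invariance under time shifts, which is valid for complex $A,B$) for large $\mu$, and a Taylor expansion with the a priori bounds $|\phi|,|\phi''|\le(|A|^2+|B|^2)^{1/2}$ plus a case split on $|B|$ versus $\mu(|A|^2+|B|^2)^{1/2}$ for small $\mu$. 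This is arguably cleaner: the dependence on $\lambda_*$ and $N$ is isolated in the single line $\min(\lambda^2,\delta^{-2})\ge\min(\lambda_*^2,N^{-2})$, and the cross-term difficulty that the paper handles by explicit computation is handled structurally by your case split. The paper's computation, in exchange, produces explicit constants.

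Two slips should be repaired. First, in the small-$\mu$ case (b) the domination claim does not close on all of $(0,1]$: on $[\mu/2,\mu]$ the linear term is only $\ge\tfrac{\sqrt3}{4}M\mu$ while your Taylor remainder bound is $\tfrac{M}{2}\mu^2$, which exceeds it for $\mu$ near $1$, so the displayed lower bound can be negative there. This is purely a matter of where you place the threshold: run the Taylor argument only for $\mu\in(0,\mu_0]$ with, say, $\mu_0=\tfrac14$, and start the superadditivity/monotonicity step from $\Phi(\mu_0)>0$ (so $\Phi(\mu)\ge\lfloor\mu/\mu_0\rfloor\,\Phi(\mu_0)\gtrsim\mu$ for $\mu\ge\mu_0$, and $\Phi(\mu)\ge\Phi(\mu_0)\ge\Phi(\mu_0)\mu^3$ for $\mu\in[\mu_0,1]$); nothing else changes. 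Second, the sharpness paragraph has the exponent direction reversed: with $A=1$, $B=0$, $\lambda=\lambda_*$ your (correct) bound $\int_0^{\delta}\sin^2(\lambda_* t)\,dt\le\lambda_*^2\delta^3/3$ gives $\delta^{-p}\int_0^{\delta}\sin^2(\lambda_* t)\,dt\le\tfrac{\lambda_*^2}{3}\delta^{3-p}\to0$ as $\delta\to0$ for $p<3$ (not $p>3$), so it is the replacement of $\delta^{-3}$ by a \emph{less} singular weight $\delta^{-p}$, $p<3$, that is impossible; for $p>3$ the inequality holds trivially on $(0,N]$ since $\delta^{-p}\ge N^{3-p}\delta^{-3}$. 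With those corrections your argument matches the paper's conclusion, including the sharpness statement.
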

Showing the estimate holds uniformly in  $\lambda$ requires some care.
\begin{proof}
We consider two cases
\begin{enumerate}
	\item $\lambda \d  \in \left(0, \frac{1}{2}\right)$,
	\item $\lambda \d \geq \frac{1}{2}$.
\end{enumerate}
In case 1, expand, then evaluate the integral to obtain
\begin{align}\label{trigintlemmacase1}
\frac{C}{\d^3} &\int_0^{\d} |-A \sin(\lambda t) + B \cos(\lambda t)|^2 dt \nonumber \\
&\geq \frac{C}{\d^3} \int_0^{\d}   |B|^2 \cos^2(\lambda t) + |A|^2 \sin^2(\lambda t) -2|A||B| \sin(\lambda t) \cos(\lambda t) dt  \nonumber\\
&= \frac{C}{\d^3} \left( \frac{2\lambda \d (|A|^2+|B|^2) +(|B|^2-|A|^2) \sin(2\lambda \d) + 2|A| |B| (\cos(2\lambda \d)-1)}{4\lambda} \right).
\end{align}
By Taylor's theorem with remainder, there exists $\zeta \in (0, 2\lambda \d),$ such that
\begin{equation}
	\sin(2\lambda\d) = 2\lambda \d - \frac{4}{3} (\lambda \d)^3 + \cos(\zeta) \frac{4}{15} (\lambda \d)^5.
\end{equation} 
Since $\lambda \d \in \left(0, \frac{1}{2}\right)$ and $\cos(\zeta) \in [0,1]$
\begin{equation}\label{eq:sinTaylor}
2\lambda \d - \frac{4}{3} \lambda^3 \d^3 \leq \sin(2\lambda \d) \leq 2 \lambda \d - \frac{19}{15} \lambda^3 \d^3.
\end{equation} 
Similary, there exists $\xi \in (0, 2\lambda \d),$ such that 
\begin{equation}
	\cos(2\lambda \d) = 1 - 2 (\lambda \d)^2 + \sin(\xi) \frac{2}{3} (\lambda \d)^4.
\end{equation} 
Since $\sin(\xi) \geq 0$
\begin{equation}\label{eq:cosTaylor}
|\cos(2\lambda \d)-1| \leq 2  \lambda^2 \d^2.
\end{equation} 
Plugging \eqref{eq:sinTaylor} and \eqref{eq:cosTaylor} back into \eqref{trigintlemmacase1}
\begin{align}
\frac{C}{\d^3} &\int_0^{\d} |-A \sin(\lambda t) + B \cos(\lambda t)|^2 dt\\
&\geq \frac{C}{\d^2} \left( \frac{2\lambda \d \left(|A|^2+|B|^2\right)+|B|^2(2\lambda \d - \frac{4}{3} \lambda^3\d^3) - |A|^2 (2\lambda \d - \frac{19}{15} \lambda^3 \d^3) - 2|A||B|(2\lambda^2 \d^2)}{4 \lambda \d}\right) \\
&= \frac{C}{\d^2} \left( |B|^2 (1-\frac{\lambda^2 \d^2}{3}) + \frac{19}{60} |A|^2 \lambda^2 \d^2 -|A||B| \lambda \d\right).
\end{align}
Then, using that $|A||B|\lambda \d \leq \frac{1}{2\e} |B|^2 + \frac{\e}{2}|A|^2 \lambda^2 \d^2,$ with $\e=\frac{18}{30}$, and $\lambda \d <\frac{1}{2}$
\begin{align}
\frac{C}{\d^3} \int_0^{\d} |-A \sin(\lambda t) + B \cos(\lambda t)|^2 dt&\geq \frac{C}{\d^2}\left( |B|^2\left(1-\frac{\lambda^2\d^2}{3}-\frac{30}{36}\right) + |A|^2 \frac{ \lambda^2 \d^2}{60} \right) \\
&\geq \frac{C}{\d^2}\left(\frac{|B|^2}{12} + \frac{|A|^2 \lambda^2 \d^2}{60} \right) \\
&=\frac{C|B|^2}{12\d^2} + \frac{C|A|^2 \lambda^2}{60},
\end{align}
and letting $C>\max\left(12N^2,\frac{60}{\lambda_*^2}\right)$ this is greater than or equal to $|A|^2+|B|^2$, as desired. 

In case 2, we begin with a change of variables $s=\lambda t$
\begin{equation*}
\frac{C}{\d^3} \int_0^{\d} |-A \sin(\lambda t) + B \cos(\lambda t)|^2 dt = \frac{C}{\lambda \d^3} \int_0^{\lambda \d} |-A\sin(s) + B \cos(s)|^2 ds.
\end{equation*}
Now, there exists $\phi \in [0,2\pi)$ such that $-A \sin(t) + B \cos(t) = \Ac \cos(t+\phi)$ where $|\Ac|^2=|A|^2+|B|^2$. Thus it remains to show that there exists $C>0$ such that 
\begin{equation*}
1 \leq \frac{C}{\lambda \d^3} \int_0^{\lambda \d} \cos^2 (t+\phi) dt.
\end{equation*}
When $\lambda \d=\frac{1}{2}$, there exists some $C_2>0$, such that 
\begin{equation}
	2 \int_0^{\frac{1}{2}} \cos^2(t+\phi) dt =\frac{1}{\lambda \d} \int_0^{\lambda \d} \cos^2(t+\phi) dt = C_2.
\end{equation}
Now for arbitrary $\lambda \d \geq \frac{1}{2}$, let $k \in \Nb_0,$ be such that $\lambda \d \in [k \pi, (k+1)\pi)$. Then, since $\cos^2(t)$ is $\pi$-periodic and non-negative
\begin{align}
\frac{1}{\lambda \d} \int_0^{\lambda \d} \cos^2(t+\phi) dt &\geq \frac{1}{\lambda \d}  \sum_{j=0}^{k-1} \left(\int_{j\pi}^{j\pi+\frac{1}{2}} \cos^2(t+\phi) dt+ \int_{j\pi+\frac{1}{2}}^{(j+1)\pi} \cos^2(t+\phi) dt\right) \\
& \geq \frac{1}{\lambda \d} k \left( \int_0^{1/2} \cos^2(t+\phi) dt +0 \right) \\
&\geq \frac{k}{\lambda \d} \frac{C_2}{2} \geq \frac{1}{(k+1)\pi} \frac{k C_2 }{2} \geq \frac{C_2}{4\pi}.
\end{align}
Dividing by $\d^2$ gives 
$$
\frac{1}{\lambda \d^3} \int_0^{\lambda \d} \cos^2(t + \phi) dt \geq \frac{C_2}{4 \pi \d^2} \geq \frac{C_2}{4 \pi N^2},
$$
so choosing $C> \frac{4 \pi N^2}{C_2}$ gives the desired conclusion. 

3. To see the power on $\d$ is sharp when $A=1, B=0$ use Taylor's Theorem to expand $\sin^2(\lambda_* t)$ at $t=0$. Then for small $\d>0$, compute
$$
\int_0^\d \sin^2(\lambda_* t) dt = \int_0^{\d} (\lambda_* t)^2 + O(\lambda_*^4  t^4) dt = C \d^3.
$$
\end{proof}
With this preliminary estimate we can now prove a short-time observability inequality for the wave equation.
\begin{lemma}\label{basicwaveobserve}
Suppose $\psi$ solves \eqref{WEb}. For $N>0$, there exists $C>0,$ such that for all $\d \in (0,N]$
$$
E(\psi,0) \leq \frac{C}{\d^3} \int_0^{\d} \int_{\Omega} |\p_t \psi|^2 dx dt,
$$
and the power on $\d$ cannot be reduced. 
\end{lemma}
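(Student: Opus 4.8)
The plan is to diagonalize the wave equation in the eigenbasis of $A=-\Delta_g$. Let $\{e_k\}_{k\ge 0}$ be an $L^2(\Omega)$-orthonormal basis of eigenfunctions, $Ae_k=\lambda_k e_k$, with $0\le\lambda_0\le\lambda_1\le\cdots$; on a connected $\Omega$ the zero eigenvalue occurs for at most one index (the constants, in the Neumann or boundaryless case), and all other $\lambda_k$ are bounded below by the smallest positive eigenvalue $\lambda_*>0$. Writing $\psi_0=\sum_k a_ke_k$, $\psi_1=\sum_k b_ke_k$, the solution of \eqref{WEb} is
\[
\psi(x,t)=\sum_k\Bigl(a_k\cos(\sqrt{\lambda_k}\,t)+b_k\tfrac{\sin(\sqrt{\lambda_k}\,t)}{\sqrt{\lambda_k}}\Bigr)e_k(x),
\]
with the convention $\tfrac{\sin(\sqrt{\lambda_k}t)}{\sqrt{\lambda_k}}=t$ when $\lambda_k=0$, so that $\partial_t\psi=\sum_k\bigl(-a_k\sqrt{\lambda_k}\sin(\sqrt{\lambda_k}t)+b_k\cos(\sqrt{\lambda_k}t)\bigr)e_k$. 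By Parseval's identity and Tonelli's theorem,
\[
\int_0^\delta\int_\Omega|\partial_t\psi|^2\,dx\,dt=\sum_k\int_0^\delta\bigl|-a_k\sqrt{\lambda_k}\sin(\sqrt{\lambda_k}t)+b_k\cos(\sqrt{\lambda_k}t)\bigr|^2\,dt .
\]

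The heart of the argument is then a termwise lower bound. For each index with $\lambda_k>0$ I would apply Lemma \ref{trigintlemma} with $A=a_k\sqrt{\lambda_k}$, $B=b_k$, $\lambda=\sqrt{\lambda_k}\ge\sqrt{\lambda_*}$, and $N$ as given, obtaining a constant $C_0=C_0(N,\lambda_*)$ \emph{independent of $k$} with
\[
\lambda_k|a_k|^2+|b_k|^2=|A|^2+|B|^2\le\frac{C_0}{\delta^3}\int_0^\delta\bigl|-A\sin(\lambda t)+B\cos(\lambda t)\bigr|^2\,dt .
\]
For the (at most one) index with $\lambda_k=0$ the corresponding integrand is the constant $|b_k|^2$, so $\int_0^\delta|b_k|^2\,dt=\delta|b_k|^2\ge\delta^3 N^{-2}|b_k|^2$, and enlarging $C_0$ so that $C_0\ge N^2$ this mode is covered by the same inequality (using $\delta\le N$). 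Summing over $k$ and using $2E(\psi,0)=\|\nabla\psi_0\|^2+\|\psi_1\|^2=\sum_k(\lambda_k|a_k|^2+|b_k|^2)$ yields $2E(\psi,0)\le C_0\delta^{-3}\int_0^\delta\int_\Omega|\partial_t\psi|^2$, i.e.\ the estimate with $C=C_0/2$.

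For the sharpness claim I would take $(\psi_0,\psi_1)=(e_1,0)$ with $\lambda_1>0$, so $\psi=\cos(\sqrt{\lambda_1}t)e_1$, $E(\psi,0)=\lambda_1/2$, and $\int_0^\delta\int_\Omega|\partial_t\psi|^2=\lambda_1\int_0^\delta\sin^2(\sqrt{\lambda_1}t)\,dt=\tfrac{\lambda_1^2}{3}\delta^3+O(\delta^5)$ by Taylor expanding $\sin^2$ at $0$; if the estimate held with $\delta^{-p}$ for some $p<3$, the right side would tend to $0$ as $\delta\to0$ while the left side stays equal to $\lambda_1/2$, a contradiction. The only delicate point is insisting on uniformity simultaneously in $\delta$ and in the mode index $k$; this is exactly what Lemma \ref{trigintlemma} supplies, so the main work is just bookkeeping around the possible zero eigenvalue and the different boundary conditions, and there is no real obstacle beyond that.
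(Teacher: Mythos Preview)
Your proposal is correct and follows essentially the same strategy as the paper: expand in Laplace eigenfunctions, apply Lemma~\ref{trigintlemma} mode by mode using the uniform lower bound $\sqrt{\lambda_*}$ on the positive frequencies, and sum. Your sharpness argument (a single positive eigenmode with zero initial velocity) is also the same as the paper's. The one cosmetic difference is that you handle the possible zero eigenvalue explicitly via $\int_0^\delta |b_k|^2\,dt=\delta|b_k|^2\ge \delta^3 N^{-2}|b_k|^2$, whereas the paper's eigenfunction ansatz $A_k\cos(\lambda_k t)+B_k\sin(\lambda_k t)$ simply drops the zero mode from both sides; your treatment is arguably a bit cleaner on this point.
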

\begin{proof}
Let $\{v_k\}$ be an orthonormal basis of eigenfunctions of $-\Delta$ on $\Omega,$ with associated eigenvalues $\{\lambda_k^2\}$ with $0 \leq \lambda_k \leq \lambda_{k+1}$. That is, $-\Delta v_k = \lambda_k^2 v_k$ and $B v_k=0$ when $\p \Omega \neq \emptyset$. 

Then via separation of variables, c.f. \cite[Section 5.7]{Rauchbook}, for any $\psi(x,t) \in L^2(\Omega \times [0,\infty))$ solving the wave equation, there exist coefficients $A_k, B_k \in \C$ such that 
$$
\psi(x,t) = \sum_{k=0}^{\infty} v_k(x) \left(A_k \cos(\lambda_k t) + B_k \sin(\lambda_k t) \right).
$$
Therefore
\begin{equation}
	\ltwo{\nabla \psi(\cdot, t=0)}^2 =\sum_{k=0}^{\infty} \<-\Delta v_k, v_k\> |A_k\cos(0)+B_k\sin(0)|=
	\sum_{k=0}^{\infty} \lambda_k^2 |A_k|^2 \ltwo{v_k}^2.
\end{equation}
And  
\begin{align}
\ltwo{\p_t \psi(\cdot,t)}^2&= \sum_{k=0}^{\infty}\lambda_k^2  \left|-A_k \sin(\lambda_k t) + B_k \cos(\lambda_k t)\right|^2 \ltwo{v_k}^2. 
\end{align}
So
\begin{equation}\label{shortobsenergy}
E(\psi, 0) = \frac{1}{2} \sum_{k=0}^{\infty} \lambda_k^2   \ltwo{v_k}^2 (|A_k|^2+|B_k|^2).
\end{equation}
And
\begin{equation}\label{shortobs}
 \sum_{k=0}^{\infty} \lambda_k^2 \ltwo{v_k}^2 \frac{C}{\d^3} \int_0^{\d} \left|-A_k \sin(\lambda_k t) +B_k\cos(\lambda_k t)\right|^2 dt = \frac{C}{\d^3} \int_0^\d \int_M |\p_t \psi|^2 dx dt. 
\end{equation}
Then applying Lemma \ref{trigintlemma}, with $\lambda_*=\lambda_0$ to each individual term in the sums in \eqref{shortobsenergy} and \eqref{shortobs}, gives the desired inequality. Note that if $\lambda_0=0$, then we instead choose $\lambda_*=\lambda_1$ to apply the Lemma, and the $\lambda_0=0$ terms drop out of \eqref{shortobsenergy} and \eqref{shortobs}. The sharpness in Lemma \ref{trigintlemma} shows the power on $\d$ cannot be reduced.
\end{proof}

\bibliographystyle{alpha}
{\footnotesize
\bibliography{mybib}}
\end{document}